\newtheorem{theorem}{Theorem}[section]
\newtheorem{prop}[theorem]{Proposition}
\newtheorem{lemma}[theorem]{Lemma}
\newtheorem{definition-lemma}[theorem]{Definition-Lemma}
\newtheorem{corollary}[theorem]{Corollary}
\theoremstyle{definition}
\newtheorem{definition}[theorem]{Definition}
\theoremstyle{plain}
\numberwithin{equation}{section}
\theoremstyle{remark}
\newtheorem{remark}[theorem]{Remark}
\newtheorem{example}[theorem]{Example}
\newtheorem{remarks}[theorem]{Remarks}
\DeclareMathAlphabet{\mathpzc}{OT1}{pzc}{m}{it} 
\DeclareMathOperator{\chara}{char}
\DeclareMathOperator{\gr}{gr} 
\DeclareMathOperator{\Tor}{Tor}
\DeclareMathOperator{\coh}{H}
\DeclareMathOperator{\HH}{HH}
\DeclareMathOperator{\Ext}{Ext}
\DeclareMathOperator{\Hom}{Hom}
\DeclareMathOperator{\Tot}{Tot}
\newcommand{\ot}{\otimes}
\newcommand{\DOT}{\setlength{\unitlength}{1pt}\begin{picture}(2.5,2)
               (1,1)\put(2,2.5){\circle*{2}}\end{picture}}
\newcommand{\bu}{\DOT}
\begin{document}

\subjclass[2020]{16E05, 16E40, 16T05, 16S35, 16S40}

\keywords{Hopf algebra, spectral sequence, smash product, noetherian, reduction modulo $p$}

\thanks{The second author was partially supported by Simons Collaboration Grant No. 688403. The third author was partially supported by NSF grant DMS-1665286.}

\title[New approaches to finite generation of cohomology rings]
{New approaches to finite generation \\ of cohomology rings}

\author{Van C.\ Nguyen} 
\address{Department of Mathematics\\United States Naval Academy\\Annapolis, MD 21402, U.S.A.}
\email{vnguyen@usna.edu}

\author{Xingting Wang}
\address{Department of Mathematics\\Howard University\\Washington, DC 20059, U.S.A.}
\email{xingting.wang@howard.edu}

\author{Sarah Witherspoon}
\address{Department of Mathematics\\Texas A\&M University \\College Station, TX 77843, U.S.A.}
\email{sjw@math.tamu.edu}

\date{\today}
 

\begin{abstract}
In support variety theory, representations of a finite dimensional (Hopf) algebra $A$ can be studied geometrically by associating any representation of $A$ to an algebraic variety using the cohomology ring of $A$. An essential assumption in this theory is the finite generation condition for the cohomology ring of $A$ and that for the corresponding modules. 

In this paper, we introduce various approaches to study the finite generation condition. First, for any finite dimensional Hopf algebra $A$, we show that the finite generation condition on $A$-modules can be replaced by a condition on any affine commutative $A$-module algebra $R$ under the assumption that $R$ is integral over its invariant subring $R^A$. Next, we use a spectral sequence argument to show that a finite generation condition holds for certain filtered, smash and crossed product algebras in positive characteristic if the related spectral sequences collapse. Finally, if $A$ is defined over a number field over the rationals, we construct another finite dimensional Hopf algebra $A'$ over a finite field, where $A$ can be viewed as a deformation of $A'$, and prove that if the finite generation condition holds for $A'$, then the same condition holds for $A$. 
\end{abstract}

\maketitle

\section{Introduction}

Hochschild cohomology was introduced by Hochschild in 1945 \cite{GHoch59} for any associative algebra. Gerstenhaber \cite{Ger63} showed that this cohomology has a graded algebra structure (via cup product) and a graded Lie algebra structure (via a Lie bracket or Gerstenhaber bracket). These two algebraic structures are compatible in such a way that makes Hochschild cohomology a {\em Gerstenhaber algebra}. Many mathematicians have since investigated Hochschild cohomology $\HH^*(A)$ for various types of algebras $A$, and it has been useful in many settings, including algebraic deformation theory (e.g.,~\cite{Ger64}) and support variety theory (e.g.,~\cite{EHSS, SO}). 

Generally speaking, the theory of support varieties studies representations of an algebra $A$ geometrically by associating each finitely generated $A$-module $M$ to a certain algebraic variety $V(M)$, namely the variety of the kernel of the graded ring homomorphism $-\otimes_AM: \HH^*(A) \to \Ext^*_A(M,M)$. In support variety theory, the following finite generation assumption on the Hochschild cohomology ring $\HH^*(A)$ of $A$ and on bimodules is essential (see e.g.,~\cite{EHSS, SO, Solberg}).

\vspace{0.5em}
\begin{itemize}
\item[\textbf{(fg)}:] $\HH^*(A,M)$ is a noetherian module over $\HH^*(A)$, for any finitely generated $A$-bimodule $M$.  
\end{itemize}
\vspace{0.5em}

\noindent
In particular, condition {\rm \textbf{(fg)}} implies that the Hochschild cohomology ring $\HH^*(A)$ is finitely generated. Note that this is not always true for a finite dimensional algebra $A$; see Xu's counterexample in \cite{Xu}. We wish to know more finite dimensional algebras $A$ that satisfy condition {\rm \textbf{(fg)}}. There are several conditions equivalent to {\rm \textbf{(fg)}}, which are sometimes more convenient to use. For completeness, we include them in Proposition~\ref{equivfg} below.

For any finite dimensional Hopf algebra $A$ over a field $k$, one can also study the support variety theory using the cohomology ring $\coh^*(A,k)$. There are certain classes of Hopf algebras $A$ whose cohomology rings are known to be finitely generated, but it is still unknown in general if this cohomology ring is always finitely generated. In fact, this is a long-standing conjecture, which is formulated in the setting of finite tensor categories by Etingof and Ostrik in \cite[Conjecture 2.18]{EsOt}. If this conjecture holds true for $A$, one can define support varieties over $A$ by using a graded ring homomorphism $ - \ot_k M: \coh^*(A,k) \to \Ext_A^*(M,M)$ similar to that described for Hochschild cohomology above. This homomorphism factors through the action of $\HH^*(A)$ on $\Ext^*_A(M,M)$~\cite{PW}, giving a connection between the support variety theories defined via Hopf algebra cohomology $\coh^*(A,k)$ and Hochschild cohomology $\HH^*(A)$. We will work specifically with the following noetherian assumption of a finite dimensional Hopf algebra $A$ over a field $k$ (see e.g.,~\cite{FW2015}), and more generally for any finite dimensional augmented $k$-algebra $A$:

\vspace{0.5em}
\begin{itemize}
\item[\textbf{(hfg)}:] $\coh^*(A,M)$ is a noetherian module over $\coh^*(A,k)$, for any finite dimensional $A$-module $M$.
\end{itemize}
\vspace{0.5em}

\noindent
In particular, when $A$ is a finite dimensional Hopf algebra, condition \textbf{(hfg)} implies that $\coh^*(A,k)$ is finitely generated, which is the finite generation conjecture mentioned above. We wish to identify more finite dimensional Hopf algebras $A$ that satisfy condition \textbf{(hfg)}, and hence move forwards towards the goal of proving the finite generation conjecture. \\

\noindent
\underline{{\bf Main Results:}} We take three different approaches in this finite generation problem: \vspace{0.1in}

Our first result in Section~\ref{sec:fg conditions} provides several equivalent assumptions for \textbf{(hfg)} including the finite generation condition on the cohomology ring $\coh^*(A,R)$ for any affine commutative $A$-module algebra $R$. Our result in Lemma~\ref{lem:hfg} shows that these two assumptions are equivalent if $A$ has the integral property (see Section~\ref{subsec:integral}). A classical result in algebraic groups says that any finite dimensional cocommutative Hopf algebra has the integral property. Zhu \cite{Zhu} proved that the integral property holds for any finite dimensional cosemisimple Hopf algebra and Skryabin \cite{Sky} showed that any finite dimensional Hopf algebra in positive characteristic has the integral property. So our result can be applied to all these cases.

Next, in Section~\ref{sec:homPI}, we study the finite generation conditions that are preserved under certain spectral sequences related to filtered, smash and crossed product algebras (see Theorems \ref{thm:FFGC}, \ref{thm:FiniteTypeCoh}, and \ref{thm:FiniteTypeHochschild} and see the Appendix for those we use here). An answer to the finite generation conditions on multiplicative spectral sequences relies on having suitable \emph{permanent cocycles} (those universal cocycles which survive under the differential maps of all pages in a cohomology spectral sequence), see e.g., \cite{FS,MPSW,NWW2019,NWi}. In this work, we use the Frobenius map to construct such permanent cocycles over the field of positive characteristic (see Lemma \ref{lem:key}). In Propositions \ref{prop:hfg} and \ref{prop:fg}, we employ the Hilton-Eckmann argument in a suspended monoidal category to show that the cohomology ring $\coh^*(A,R)$ of a finite dimensional Hopf algebra $A$ with coefficients in an $A$-module algebra $R$ is finite over its affine graded center under some assumptions on $R$.  We apply this result to the first pages of  May spectral sequences related to filtered algebras and to the second pages of Lyndon-Hochschild-Serre spectral sequences related to smash and crossed products. Moreover, if the related spectral sequences collapse over a field of positive characteristic, then we are able to construct permanent cocycles by using the Frobenius map on their graded centers to conclude the finite generation conditions.

Finally, in Section~\ref{sec:mod p}, we apply the well-known reduction modulo $p$ method in number theory to support variety theory. For a large family of finite dimensional complex Hopf algebras $A$ (e.g., Lusztig's small quantum groups \cite{Lus90}), we can select a ``good" prime $p$ to construct another finite dimensional Hopf algebra $A'$ over the finite field $F_{q}$, where $q=p^n$ for some integer $n$. Here, $A$ can be viewed as a deformation of $A'$. We show that if the newly constructed Hopf algebra $A'$ over the finite field satisfies \textbf{(hfg)} then so does the original complex Hopf algebra $A$. Namely, the finite generation conditions in positive characteristic can be lifted up to those in zero characteristic via reduction modulo $p$ (see Theorem \ref{thm:m1}). Moreover, our approach is applicable both to the cohomology ring of finite dimensional Hopf algebras and to the Hochschild cohomology of finite dimensional associative algebras (see Theorem \ref{thm:m2}). Therefore, our lifting method provides a viable connection between a field of \emph{characteristic zero} and a field of \emph{positive characteristic} in studying the cohomology of Hopf algebras. This link is often lacking when one classifies Hopf algebras, as the classification methods are quite different from one field to the other (see e.g., \cite{A, NWa, NWW2015, NWW2018, Wang2013}).


\section{Preliminaries}
\label{sec:prelim}

\numberwithin{equation}{subsection}

Throughout the paper, let $k$ be a base field. All modules are left modules and $\ot = \ot_k$ unless stated otherwise. We first recall the two cohomology types discussed in this paper and present some background material necessary to build up our main results. 

\begin{definition}
\label{def:hochschild}
Let $A$ be an algebra over $k$ and $M$ be an $A$-bimodule, which can be considered as a left module over the enveloping algebra $A^e = A \ot A^{op}$ of $A$. The {\bf Hochschild cohomology} of $A$ with coefficients in $M$ is
$$\HH^*(A,M)~:=~\bigoplus_{n \ge 0}\, \Ext^n_{A^e}(A,M),$$
where $A$ is an $A$-bimodule via the left and right multiplications in $A$.
\end{definition}

Under the cup product, $\HH^*(A)~:= ~\HH^*(A,A)$ is a graded commutative $k$-algebra and $\HH^*(A,M)$ is a module over $\HH^*(A)$. 

\begin{definition}
\label{def:coh}
Let $A$ be an augmented $k$-algebra and $M$ be a left $A$-module. The {\bf cohomology} of $A$ with coefficients in $M$ is
$$\coh^*(A,M):=  \bigoplus_{n \ge 0}\, \Ext^n_A (k,M),$$ 
where $k$ is an $A$-module via the augmentation map. 
\end{definition}

Under the Yoneda product, $\coh^*(A,k)$ is a graded $k$-algebra and $\coh^*(A,M)$ is a module over $\coh^*(A,k)$. If, in addition, $A$ is a finite dimensional Hopf algebra, then $\coh^*(A,k)$ is graded commutative (see also e.g.,~\cite{ginzburg-kumar93,Suarez-Alvarez}).

\subsection{Finite generation condition on Hochschild cohomology}
We consider the following finite generation conditions on Hochschild cohomology as
alternatives to the {\textbf{(fg)}} condition:
\vspace{0.5em}
\begin{itemize}
\item[\textbf{(fg1)}:] There is a finitely generated commutative graded subalgebra $H$ of $\HH^*(A)$ with degree-$0$ component $H_0=\HH^0(A)$; and
\item[\textbf{(fg2)}:] $\Ext^*_A(M,N)$ is a finitely generated $H$-module, for all pairs of finitely generated $A$-modules $M$ and $N$. 
\end{itemize}
\vspace{0.5em}

We next recall how these finite generation conditions and others are all equivalent.

\begin{prop}
\label{equivfg}
For any finite dimensional $k$-algebra $A$, the following are equivalent.
\begin{enumerate}
\item $\HH^*(A,M)$ is a noetherian module over $\HH^*(A)$, for any finite $A$-bimodule $M$, that is, $A$ satisfies {\rm \textbf{(fg)}}. 
\item $\HH^*(A)$ is a finitely generated algebra and $\Ext^*_A(A/\rm{rad}(A),A/\rm{rad}(A))$ is a finitely generated module over $\HH^*(A)$, where $\rm{rad}(A)$ denotes the radical of $A$. 
\item $\HH^*(A)$ is a finitely generated algebra and $\Ext^*_A(M,N)$ is a finitely generated module over $\HH^*(A)$, for all pairs of finite $A$-modules $M$ and $N$.
\item $A$ satisfies {\rm \textbf{(fg1)}} and {\rm \textbf{(fg2)}}.
\end{enumerate}
\end{prop}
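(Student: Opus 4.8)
The plan is to establish the cycle of implications $(1)\Rightarrow(2)\Rightarrow(3)\Rightarrow(1)$ together with the equivalence $(3)\Leftrightarrow(4)$, all resting on two standard inputs. The first is the natural isomorphism of graded $\HH^*(A)$-modules
\[
\HH^*(A,\Hom_k(M,N))\;\cong\;\Ext^*_A(M,N),
\]
valid for all finite $A$-modules $M,N$, which lets me pass between two-sided cohomology (coefficients in a bimodule) and one-sided $\Ext$ (coefficients in a module), since $\Hom_k(M,N)$ carries a natural $A$-bimodule structure. The second is the ring-theoretic fact that, because $\HH^*(A)$ is graded commutative with $\HH^0(A)$ finite dimensional, it is a finitely generated $k$-algebra if and only if it is Noetherian as a ring; over a Noetherian ring a module is Noetherian if and only if it is finitely generated, so throughout I may freely trade ``noetherian module'' for ``finitely generated module.'' A recurring device is d\'evissage: over a fixed Noetherian ring the class of (bi)modules whose relevant cohomology is finitely generated is closed under extensions, submodules, quotients and summands, so it suffices to control composition factors, using that the long exact sequences attached to short exact sequences of (bi)modules are sequences of $\HH^*(A)$-module maps.

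For $(1)\Rightarrow(2)$ I would take $M=A$, a finite bimodule, so that $\HH^*(A)=\HH^*(A,A)$ is Noetherian as a ring and hence finitely generated as a $k$-algebra; then, applying (1) to the finite bimodule $\Hom_k(A/\mathrm{rad}(A),A/\mathrm{rad}(A))$ and invoking the displayed isomorphism identifies $\Ext^*_A(A/\mathrm{rad}(A),A/\mathrm{rad}(A))$ as a finitely generated $\HH^*(A)$-module. For $(2)\Rightarrow(3)$, finite generation of $\HH^*(A)$ makes it Noetherian; writing $A/\mathrm{rad}(A)$ as a direct sum of the simple modules shows each $\Ext^*_A(S_i,S_j)$ is finitely generated (a summand of a finitely generated module), and then d\'evissage along composition series of two arbitrary finite modules $M,N$ propagates finite generation to $\Ext^*_A(M,N)$. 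For $(3)\Rightarrow(1)$, I again use the displayed isomorphism: the bimodules of the form $\Hom_k(U,V)$ have finitely generated cohomology by (3), and every finite bimodule is assembled from such bimodules up to extensions and summands (its simple $A^e$-composition factors are, over a splitting field, exactly the $\Hom_k(S_i,S_j)$); d\'evissage then yields that $\HH^*(A,M)$ is finitely generated, hence Noetherian, over the Noetherian ring $\HH^*(A)$.

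For the equivalence with (4): to prove $(3)\Rightarrow(4)$ I would take $H=\HH^{\mathrm{ev}}(A)$, which is commutative because odd-degree classes square to zero when $\mathrm{char}\,k\neq 2$ (in characteristic $2$ the whole ring is commutative and one sets $H=\HH^*(A)$). This $H$ has $H_0=\HH^0(A)$, is finitely generated as a $k$-algebra, and $\HH^*(A)$ is a finite $H$-module, so $\textbf{(fg1)}$ holds and finite generation over $\HH^*(A)$ descends to finite generation over $H$, giving $\textbf{(fg2)}$. Conversely, for $(4)\Rightarrow(3)$, the subalgebra $H$ is a finitely generated commutative graded $k$-algebra, hence Noetherian; by $\textbf{(fg2)}$ and the displayed isomorphism each $\HH^*(A,\Hom_k(U,V))$ is finitely generated over $H$, so d\'evissage on the $A^e$-composition series of the regular bimodule $A$ shows $\HH^*(A)=\HH^*(A,A)$ is a finite $H$-module, whence $\HH^*(A)$ is a finitely generated $k$-algebra and all the $\Ext^*_A(M,N)$, being finite over $H\subseteq\HH^*(A)$, are finite over $\HH^*(A)$.

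The step I expect to be the main obstacle is the passage between the two-sided and one-sided theories in $(3)\Rightarrow(1)$ and $(4)\Rightarrow(3)$: controlling an arbitrary finite $A$-bimodule $M$ through the special bimodules $\Hom_k(U,V)$. The clean statement ``every simple $A^e$-module is of the form $\Hom_k(S_i,S_j)$'' holds over a splitting field, so in general I would first reduce to a splitting field, checking that $\textbf{(fg)}$ and the finite generation statements are insensitive to the faithfully flat base change $k\to\bar k$ (Hochschild cohomology commutes with this flat extension for finite dimensional algebras, and finite generation and Noetherianity both ascend and descend along it). A secondary technical point is the graded-commutative ring theory underlying tool two: that a finitely generated graded-commutative $k$-algebra is Noetherian, that $\HH^{\mathrm{ev}}(A)$ is finitely generated with $\HH^*(A)$ finite over it (an Artin--Tate argument), and the converse passage from Noetherianity to algebra finite generation; these are standard but require the characteristic-$2$ caveat noted above.
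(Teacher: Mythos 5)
Your proposal is essentially a reconstruction of the proofs that the paper does \emph{not} spell out: the paper's own ``proof'' of Proposition~\ref{equivfg} consists of citing \cite[Proposition 2.4]{EHSS} for (i)$\Leftrightarrow$(ii)$\Leftrightarrow$(iii) and \cite[Proposition 5.7]{Solberg} for (ii)$\Leftrightarrow$(iv), plus the observation that a graded commutative algebra is finitely generated if and only if it is noetherian. Your two ``standard inputs'' --- the isomorphism $\HH^*(A,\Hom_k(M,N))\cong \Ext^*_A(M,N)$ and the noetherian/finitely-generated dictionary of Proposition~\ref{prop:1} --- together with d\'evissage along long exact sequences of $\HH^*(A)$-modules, are exactly the machinery behind those citations, and your cycle (1)$\Rightarrow$(2)$\Rightarrow$(3)$\Rightarrow$(1) plus (3)$\Leftrightarrow$(4) via $H=\HH^{\mathrm{ev}}(A)$ and Artin--Tate (the paper's Proposition~\ref{prop:1}(2)(c) supplies the finiteness of $\HH^*(A)$ over its even part) is faithful to them. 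One cosmetic correction: $\HH^{\mathrm{ev}}(A)$ is commutative in \emph{every} characteristic, simply because even-degree elements commute with everything under graded commutativity; the vanishing of odd squares and the characteristic-$2$ caveat are irrelevant to the commutativity of the even part (they matter only if one wants all of $\HH^*(A)$ to serve as $H$).

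The one place where your argument, as written, has a genuine soft spot is the step you yourself flagged: the reduction of an arbitrary finite $A$-bimodule to the bimodules $\Hom_k(S_i,S_j)$ in (3)$\Rightarrow$(1) and (4)$\Rightarrow$(3). Your fix --- base change to $\bar k$ --- is the right idea, but the justification ``finite generation and Noetherianity both ascend and descend along the faithfully flat extension'' is too quick in the \emph{ascent} direction. An arbitrary finite module over $A\otimes_k\bar k$ is not extended from $k$, only from some finite subextension $F/k$, and deducing finite generation of $\Ext^*_{A\otimes F}(M_0,N_0)$ from condition (3) over $k$ is not formal: the Eckmann--Shapiro/restriction trick applies when one argument is induced or canonically base-changed (which is why the bimodule condition (1), whose source $A$ extends canonically, ascends cleanly), whereas realizing an arbitrary $M_0$ as a direct summand of $(M_0|_A)\otimes_kF$ requires $F/k$ separable. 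Over a perfect field $k$ (or whenever $A/\mathrm{rad}(A)$ is separable, so that the simple $A^e$-modules are already summands of $\Hom_k(A/\mathrm{rad}(A),A/\mathrm{rad}(A))$) your argument closes without any base change at all; over imperfect fields the wrinkle is real, and notably the paper glosses it too --- its Remark~\ref{rem:FE} asserts field-extension invariance of \textbf{(fg)} ``by a similar argument'' to Lemma~\ref{Rem:ext}, whose ascent step uses a finite splitting extension plus composition series and whose descent step uses Lenzing's finite-presentation criterion \cite{Len}. So: your skeleton matches the cited proofs and is sound, but if you want the statement over an arbitrary field you need to either run the Lemma~\ref{Rem:ext}-style two-stage extension $k\subset F\subset \bar k$ carefully (choosing $F$ so that $(A\otimes F)/\mathrm{rad}(A\otimes F)$ is split, hence separable, before passing to $\bar k$), or restrict the splitting-field shortcut to the perfect case and say so.
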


\begin{proof}
(i)$\Leftrightarrow$(ii)$\Leftrightarrow$(iii): The proof is essentially that of \cite[Proposition 2.4]{EHSS}. Since $\HH^*(A)$ is graded commutative, we know it is finitely generated if and only if it is noetherian by \cite[Proposition~\ref{prop:1}(2)]{EHSS}. Also, a module over a noetherian ring is finitely generated if and only if it is a noetherian module. 

(ii)$\Leftrightarrow$(iv): See~\cite[Proposition 5.7]{Solberg}.
\end{proof}


\subsection{Properties of graded algebras}

Both cohomology structures $\HH^*(A)$ and $\coh^*(A,k)$ are graded algebras, so to work with these cohomology structures, we recall some basic properties of nonnegatively graded algebras. 

Let $A$ be a (nonnegatively) graded $k$-algebra, that is, $A=\bigoplus_{i\ge 0} A_i$ with $1_A \in A_0$ and $A_iA_j \subseteq A_{i+j}$ for all integers $i,j \geq 0$. If $A_0=k$, then $A$ is called {\bf connected graded}. Recall that $A$ is called {\bf $A_0$-affine} if it is a finitely generated algebra over the degree-$0$ component $A_0$. The following well-known results will be used many times throughout the paper without explicit citation. 

\begin{prop}
\label{prop:1}
Let $A=\bigoplus_{i\ge 0} A_i$ be a graded algebra for which the subalgebra $A_0$ is 
noetherian commutative. Then: 
\begin{enumerate}
\item $A$ is noetherian if and only if $A$ is graded noetherian.
\item Suppose $A$ is graded commutative. Then the following are equivalent:
  \begin{itemize}
    \item [(a)] $A$ is noetherian.
    \item [(b)] $A$ is $A_0$-affine.    
    \item [(c)] $A$ is a finite module over $A^{{\rm ev}}$ and $A^{{\rm ev}}$ is $A_0$-affine, where $A^{{\rm ev}}$ consists of all even-degree elements of $A$. 
  \end{itemize}  
\item Suppose $A$ is noetherian and is a finite module over some graded central subalgebra $Z$ with $Z_0=A_0$. Then $Z$ is $Z_0$-affine and noetherian.
\end{enumerate}
\end{prop}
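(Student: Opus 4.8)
The plan is to treat the three parts separately, reducing each to a graded refinement of a classical commutative-algebra fact.

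For part (i) I would prove the nontrivial implication (graded noetherian $\Rightarrow$ noetherian) by the standard leading-term argument. For a nonzero $a\in A$ let $\operatorname{in}(a)$ denote the lowest-degree nonzero homogeneous component of $a$, and for a left ideal $I$ let $\operatorname{in}(I)=\bigoplus_d\operatorname{in}(I)_d$ be the additive subgroup generated by all leading forms of elements of $I$. First I would check that $\operatorname{in}(I)$ is a graded left ideal, that each nonzero element of $\operatorname{in}(I)_d$ is genuinely the leading form of some element of $I$, and that $I\subseteq J$ implies $\operatorname{in}(I)\subseteq\operatorname{in}(J)$. Given an ascending chain $I_1\subseteq I_2\subseteq\cdots$, the graded chain $\operatorname{in}(I_1)\subseteq\operatorname{in}(I_2)\subseteq\cdots$ stabilizes by hypothesis; the key lemma is that $I\subseteq J$ together with $\operatorname{in}(I)=\operatorname{in}(J)$ forces $I=J$, which one gets by taking an element of $J\setminus I$ of minimal leading degree and cancelling its leading form against one coming from $I$, contradicting minimality. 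This forces the original chain to stabilize. (Only the well-ordering of $\N$ enters here; the hypotheses on $A_0$ are not needed for this part.)

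For part (ii) I would run the cycle (a)$\Rightarrow$(b)$\Rightarrow$(c)$\Rightarrow$(a). For (a)$\Rightarrow$(b), since $A$ is noetherian the irrelevant ideal $A_+=\bigoplus_{i\ge1}A_i$ is generated by finitely many homogeneous elements $y_1,\dots,y_s$ of positive degree, and an induction on degree using $A_n=\sum_j A_{n-\deg y_j}\,y_j$ shows $A=A_0[y_1,\dots,y_s]$; this argument is insensitive to (graded) commutativity and in fact also establishes the general fact that a noetherian graded algebra is always $A_0$-affine, which I will reuse in part (iii). For (b)$\Rightarrow$(c), split the homogeneous generators into even ones $y_1,\dots,y_p$ and odd ones $z_1,\dots,z_q$; using that the square of an odd element is even (and vanishes when $\operatorname{char}k\ne2$), $A$ is spanned over $A_0[y_1,\dots,y_p,\{z_iz_j\}]$ (with the squares $z_j^2$ adjoined when $\operatorname{char}k=2$) by the finitely many squarefree monomials $z^{\vec\epsilon}$, $\vec\epsilon\in\{0,1\}^q$. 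That subalgebra is exactly $A^{\mathrm{ev}}$, so this simultaneously shows $A^{\mathrm{ev}}$ is $A_0$-affine and $A$ is a finite $A^{\mathrm{ev}}$-module. Finally (c)$\Rightarrow$(a): $A^{\mathrm{ev}}$ is an honest commutative ring that is $A_0$-affine over the noetherian ring $A_0$, hence noetherian by the Hilbert basis theorem; since even elements are central in the graded-commutative algebra $A$ and $A$ is module-finite over $A^{\mathrm{ev}}$, $A$ is a noetherian $A^{\mathrm{ev}}$-module and therefore a noetherian ring.

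For part (iii), which is a graded form of the Artin--Tate lemma, I expect the most care is required. First I would invoke the observation isolated in part (ii) to conclude the noetherian graded algebra $A$ is $A_0$-affine, say $A=A_0\langle t_1,\dots,t_m\rangle$ with $t_l$ homogeneous. Writing $A=\sum_i Z a_i$ with homogeneous $a_i$ and $a_1=1$, I would record the finitely many structure constants $t_l=\sum_i z_{li}a_i$ and $a_ia_j=\sum_k z_{ijk}a_k$, all with coefficients in $Z$, and set $Z_1$ to be the $A_0$-subalgebra of $Z$ they generate. Because $Z$ is central it is commutative, so $Z_1$ is $A_0$-affine and noetherian; expanding an arbitrary $A_0$-polynomial in the $t_l$ via these structure constants then yields $A=\sum_i Z_1 a_i$, a finite $Z_1$-module. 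Hence $A$ is a noetherian $Z_1$-module, so its $Z_1$-submodule $Z$ is finite over $Z_1$; thus $Z$ is $A_0=Z_0$-affine and, being module-finite over the noetherian ring $Z_1$, noetherian. The point to watch is that centrality of $Z$ is used in two essential places — to know $Z$ is commutative (so $Z_1$ is affine and noetherian) and to keep products closed in the form $\sum_i(\text{coefficient})\,a_i$ — so that no commutativity of $A$ itself is required.
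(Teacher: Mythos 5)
Your proof is correct and complete. The paper itself gives no argument for this proposition, only citations --- Evens for (i), Ji--Lin \cite{JL} for (ii), and an instruction to adapt the Artin--Tate lemma \cite{MR} for (iii) --- and your three arguments (the leading-form argument for passing from graded noetherian to noetherian, the reduction of a graded-commutative affine algebra to a finite module over its $A_0$-affine even subalgebra, and the graded Artin--Tate argument with structure constants, correctly using centrality of $Z$ both for commutativity of $Z_1$ and for closure of $\sum_i Z_1 a_i$ under multiplication) are precisely the standard proofs those citations point to.
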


\begin{proof}
For a proof of (i), see for example~\cite{Evens}.
For (ii), see~\cite[Lemma 3.2]{JL}.
For (iii), we can adapt the Artin-Tate Lemma \cite[Lemma 13.9.10]{MR} to the graded setting.
\end{proof}


\subsection{Integral property of Hopf algebras}
\label{subsec:integral}

We recall that a commutative ring $R$ is {\bf integral} over some subring $T$ if every element of $R$ is a root of some monic polynomial with coefficients in $T$. Furthermore, if $R$ is $k$-affine, then $R$ is integral over $T$ if and only if $R$ is a finite module over $T$. We say a Hopf algebra $A$ has the {\bf integral property} if
for any commutative $A$-module algebra $R$ (that is, $R$ is an $A$-module with unit and multiplication maps both $A$-module maps), it is integral over its invariant subring $R^A$. Note that, in characteristic zero, Zhu \cite{Zhu} showed that the four dimensional Sweedler Hopf algebra does {\em not} have the integral property. 

\begin{prop}
\label{prop:integral}
Let $A$ be a finite dimensional Hopf algebra over a field $k$. In each of the following cases, $A$ satisfies the integral property:
\begin{enumerate}
\item the base field $k$ has positive characteristic,
\item $A$ is semisimple,
\item $A$ is cosemisimple,
\item $A$ is cocommutative. 
\end{enumerate}
\end{prop}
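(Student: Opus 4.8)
The plan is to reduce all four cases to the three facts already quoted in the introduction — Skryabin's theorem \cite{Sky} for positive characteristic, Zhu's theorem \cite{Zhu} for the cosemisimple case, and the classical integrality of invariants under a finite group for the cocommutative case — supplemented by the Larson--Radford theorem relating semisimplicity and cosemisimplicity in characteristic zero. The first observation is that case (i) \emph{is} Skryabin's theorem and case (iii) \emph{is} Zhu's theorem verbatim, so those require no further argument. Everything else hinges on organizing cases (ii) and (iv) according to $\operatorname{char} k$: in positive characteristic every finite dimensional Hopf algebra already has the integral property by (i), so only the characteristic-zero subcases of (ii) and (iv) carry content.

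For case (ii) I would split on the characteristic. If $\operatorname{char} k = p > 0$ then $A$ is a finite dimensional Hopf algebra in positive characteristic, and (i)/Skryabin applies directly, independently of semisimplicity. If $\operatorname{char} k = 0$, I invoke the Larson--Radford theorem: a finite dimensional semisimple Hopf algebra over a field of characteristic zero is automatically cosemisimple (indeed $S^2 = \mathrm{id}$). Hence $A$ is cosemisimple and (iii)/Zhu yields the integral property. Thus (ii) follows formally from (i) and (iii), with no separate analysis.

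For case (iv) I again split on the characteristic, the positive-characteristic subcase being subsumed by (i). In characteristic zero I use the structure theory of cocommutative Hopf algebras: after base change to $\bar k$, the Cartier--Kostant--Gabriel theorem identifies $\bar A = A \otimes_k \bar k$ with the group algebra $\bar k \Gamma$ of a finite group $\Gamma$ (the primitive part vanishes since $A$ is finite dimensional and $\operatorname{char} k = 0$). A commutative $\bar A$-module algebra $\bar R$ is then a commutative ring on which $\Gamma$ acts by algebra automorphisms, and for each $r \in \bar R$ the orbit polynomial $\prod_{\gamma \in \Gamma}(X - \gamma \cdot r)$ is monic with coefficients in $\bar R^{\Gamma} = \bar R^{\bar A}$ and has $r$ as a root, exhibiting $\bar R$ as integral over its invariants. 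To return to an arbitrary base field I descend: taking invariants commutes with the faithfully flat base change $k \to \bar k$, so $(\bar R)^{\bar A} = R^A \otimes_k \bar k$, and integrality of a single $r \in R$ over $R^A$ can be checked after base change because it amounts to finiteness of the cyclic extension $R^A[r]$, which descends by faithful flatness.

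The routine pieces are the orbit-polynomial computation and the bookkeeping of invariants under base change; the only place needing genuine care is the characteristic-zero cocommutative subcase, where one must pass to $\bar k$ to apply Cartier--Kostant--Gabriel and then descend integrality. Alternatively, one can bypass base change by phrasing (iv) geometrically: the dual $A^{*}$ is a finite dimensional commutative Hopf algebra $\mathcal{O}(G)$ for a finite group scheme $G$, a commutative $A$-module algebra $R$ corresponds to a $G$-action on $\operatorname{Spec} R$ with $R^A = R^G$, and the classical fact that the quotient map $\operatorname{Spec} R \to \operatorname{Spec}(R^G)$ is finite (hence integral) for a finite group scheme gives the claim over any field at once. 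The main conceptual point — and what makes the proposition short — is recognizing that no new hard analysis is required: the substance is the reduction of (ii) to (i) and (iii) via Larson--Radford, and the identification of (iv) with classical finite-group (scheme) invariant theory.
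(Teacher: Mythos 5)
Your proposal is correct, but it takes a genuinely different route from the paper for cases (ii) and (iv). The paper's proof is pure citation: (i) is \cite[Proposition 2.7]{Sky}, (ii) is \cite[Theorem 6.1]{Sky} (Skryabin also proves the semisimple case directly, uniformly in all characteristics), (iii) is \cite[Theorem 2.1]{Zhu}, and (iv) is \cite[Theorem 4.2.1]{MO93} --- the classical cocommutative result the introduction alludes to. You instead \emph{derive} (ii) from (i) and (iii) via Larson--Radford (semisimple $\Rightarrow$ cosemisimple in characteristic zero), which is a valid and rather elegant logical economy: it shows the semisimple case carries no content beyond the other two, at the cost of importing Larson--Radford where the paper just cites one more theorem of Skryabin. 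For (iv) you reprove Montgomery's theorem rather than cite it: in positive characteristic you defer to (i), which is logically fine but uses Skryabin's hard general theorem where the classical cocommutative-specific argument would suffice, and in characteristic zero your Cartier--Kostant reduction to a finite group algebra over $\bar k$, the orbit polynomial $\prod_{\gamma\in\Gamma}(X-\gamma\cdot r)$, and the faithfully flat descent all check out --- note that the identification $(\bar R)^{\bar A}=R^A\otimes_k\bar k$ uses finite dimensionality of $A$ (invariants are the kernel of $R\to\Hom_k(A,R)$, and this kernel commutes with flat base change), and your reduction of integrality of a single element to module-finiteness of $R^A[r]$, which descends along $k\to\bar k$, is the right mechanism. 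Your alternative geometric phrasing (finite group scheme $G$ acting on $\operatorname{Spec}R$ with $\operatorname{Spec}R\to\operatorname{Spec}R^G$ finite) is essentially the content of \cite[Theorem 4.2.1]{MO93} in scheme-theoretic language, so that variant converges back to the paper's citation. In short: same statement, same external pillars (Skryabin, Zhu), but you trade two of the paper's citations for self-contained reductions, which makes the logical dependencies among the four cases more transparent.
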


\begin{proof}
(i) is \cite[Proposition 2.7 and the remark below it]{Sky}. (ii) is \cite[Theorem 6.1 and the remark below it]{Sky}. (iii) follows from \cite[Theorem 2.1]{Zhu}. (iv) is \cite[Theorem 4.2.1]{MO93}.
\end{proof}


\subsection{Extensions for the finite generation conditions} 
\label{subsec:FG}

In this section, we remark that the finite generation conditions {\rm \textbf{(fg)}} and {\rm \textbf{(hfg)}} that were defined on (Hochschild) cohomology of an algebra $A$ in the introduction can be extended to another algebra $B$ under some assumptions: 

\begin{lemma}\label{lem:FGCSUR}
Let $f: A\to B$ be a map of finite dimensional augmented $k$-algebras such that $B$ is projective as a (right) $A$-module via $f$. If $B$ satisfies {\rm \textbf{(hfg)}}, then so does $A$. 
\end{lemma}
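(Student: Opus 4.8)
The plan is to transfer finite generation from $B$ to $A$ along $f$ by means of an Eckmann--Shapiro isomorphism, exploiting the hypothesis that $B$ is projective over $A$. Throughout, let $\rho\colon\coh^*(B,k)\to\coh^*(A,k)$ be the restriction homomorphism of graded algebras induced by $f$; it is defined because $\mathrm{Res}(k_B)=k_A$, the two augmentation modules agreeing since $\epsilon_B\circ f=\epsilon_A$. As a first step I would record the strong form of the hypothesis: \textbf{(hfg)} for $B$ says that $\coh^*(B,k)$ is a finitely generated (hence Noetherian) algebra and that $\Ext^*_B(X,Y)$ is a finitely generated $\coh^*(B,k)$-module for all finite-dimensional $B$-modules $X,Y$. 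I would take this reformulation from the equivalent conditions for \textbf{(hfg)}; the reduction in the coefficient (second) variable is by d\'evissage along composition series, using that over the Noetherian ring $\coh^*(B,k)$ finitely generated (equivalently Noetherian) modules are closed under extensions, while the generality in the first variable is exactly what the equivalent conditions provide.

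Next, since $B$ is projective, hence flat, as a right $A$-module, the induction functor $\mathrm{Ind}=B\otimes_A(-)$ is exact and, being a left adjoint to the exact functor $\mathrm{Res}$, carries projective $A$-modules to projective $B$-modules. Writing $\overline{B}:=B\otimes_A k=\mathrm{Ind}(k)$, a finite-dimensional $B$-module, the adjunction $\mathrm{Ind}\dashv\mathrm{Res}$ then derives to a natural Eckmann--Shapiro isomorphism
$$
\Ext^*_A(k,\mathrm{Res}\,N)\;\cong\;\Ext^*_B(\overline{B},N)
$$
for every $B$-module $N$, compatible with the actions of $\coh^*(A,k)$ and $\coh^*(B,k)$ through $\rho$. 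Taking $N=k$ identifies $\coh^*(A,k)$ with $\Ext^*_B(\overline{B},k)$, which is finitely generated over $\coh^*(B,k)$ by the strong \textbf{(hfg)}. Thus $\coh^*(A,k)$ is finitely generated as a module over the Noetherian subring $\rho(\coh^*(B,k))$, and is therefore itself Noetherian and $k$-affine by a graded Artin--Tate argument (Proposition~\ref{prop:1}); this disposes of the ring part of \textbf{(hfg)} for $A$.

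For the module part, let $M$ be an arbitrary finite-dimensional $A$-module. Applying the displayed isomorphism with $N=\mathrm{Ind}(M)=B\otimes_A M$ gives
$$
\coh^*\!\big(A,\mathrm{Res}(B\otimes_A M)\big)\;\cong\;\Ext^*_B\!\big(\overline{B},\,B\otimes_A M\big),
$$
and the right-hand side is a finitely generated $\coh^*(B,k)$-module, again by the strong \textbf{(hfg)}; through $\rho$ it is then finitely generated over the Noetherian ring $\coh^*(A,k)$. It remains to recover $\coh^*(A,M)$ from $\coh^*(A,\mathrm{Res}(B\otimes_A M))$, and I would do this by proving that the unit $\eta_M\colon M\to\mathrm{Res}(B\otimes_A M)$ is a split monomorphism of $A$-modules. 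Given such a splitting, $\coh^*(A,M)$ is a direct summand of the finitely generated $\coh^*(A,k)$-module $\coh^*(A,\mathrm{Res}(B\otimes_A M))$, hence is finitely generated, which establishes \textbf{(hfg)} for $A$.

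The main obstacle is precisely this last splitting. Projectivity of $B$ over $A$ secures the good homological behaviour of $\mathrm{Ind}$, but producing an $A$-linear retraction of $\eta_M$ for every $M$ is what genuinely combines projectivity with the augmentation: it amounts to $A$ being a direct summand of $B$ as an $(A,A)$-bimodule, for which the crucial input is that $1_B\notin I_A\,B$ (and $1_B\notin B\,I_A$), itself a consequence of $\epsilon_B\circ f=\epsilon_A$. I expect the careful verification of this splitting---in particular matching the side on which $B$ is assumed projective with the side on which the retraction is needed---to be the technical heart of the proof. Once the splitting is available one may, if preferred, reduce to simple $M$ by d\'evissage along a composition series together with the closure of Noetherian modules under extensions, but the direct summand argument already handles general $M$.
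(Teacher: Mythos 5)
Your Eckmann--Shapiro isomorphism for induction is correctly set up, but the argument breaks at exactly the point you flagged: the splitting of the unit $\eta_M\colon M\to \mathrm{Res}(B\otimes_A M)$ is not a consequence of the hypotheses, and in the stated generality it is false. Lemma~\ref{lem:FGCSUR} allows an arbitrary map $f$ of augmented algebras, and the remark immediately following it in the paper already provides a counterexample to your splitting: take $A=B\times C$ with $f$ the projection, augmenting $A$ by $\epsilon_B\circ f$. Then $B=e_BA$ is projective as a right $A$-module, yet for any nonzero finite dimensional $A$-module $M$ with $e_BM=0$ one has $B\otimes_AM=0$, so $\eta_M$ is not even injective; meanwhile your proposed crucial input $1_B\notin f(I_A)B$ \emph{does} hold here, since $f(I_A)B\subseteq I_B$. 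So that input cannot produce the splitting, and no retraction of $f$ (bimodule or otherwise) can exist when $f$ is not injective. Even in the intended Hopf application ($A\subset B$ a Hopf subalgebra, $B$ free by Nichols--Zoeller), an $A$-linear retraction of $\eta_M$ for every $M$ amounts to $f$ being split as a map of $(A,A)$-bimodules; self-injectivity of the Frobenius algebra $A$ does give a right $A$-linear retraction $\pi\colon B\to A$, but $\pi\otimes\mathrm{id}_M$ is then not left $A$-linear, and a bimodule retraction is not supplied by projectivity plus compatibility of augmentations. There is a secondary gap of the same nature: your ``strong form'' of \textbf{(hfg)} (finite generation of $\Ext^*_B(X,Y)$ over $\coh^*(B,k)$ for all finite dimensional $X,Y$) is Proposition~\ref{equivfg*}(v), which is proved for Hopf algebras, whereas the lemma concerns augmented algebras; for those, $\Ext^*_B(\overline{B},N)$ carries no natural $\coh^*(B,k)$-action at all, and by Remark~\ref{rem:hfg}(b) the condition \textbf{(hfg)} only governs $\Ext^*_B(k,-)$.

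The paper sidesteps both problems by running the adjunction in the opposite direction: it uses the \emph{right} adjoint of restriction, the coinduction functor $M\mapsto\Hom_A({}_AB,M)$, rather than induction. Since ${}_AB$ is projective, a projective resolution of $k$ over $B$ restricts to a projective resolution of $k$ over $A$, yielding $\Ext^*_B\bigl(k,\Hom_A({}_AB,M)\bigr)\cong\Ext^*_A(k,M)$ compatibly with restriction $\coh^*(B,k)\to\coh^*(A,k)$. This keeps $k$ in the contravariant slot on the $B$-side, so the coefficient $\Hom_A(B,M)$ is just a finite dimensional $B$-module and the hypothesis \textbf{(hfg)} for $B$ applies verbatim --- no splitting, no strong form of \textbf{(hfg)}, and no bimodule direct-summand analysis are needed. (Coinduction also treats the product example correctly: there $\Hom_A(B,M)=0$ and indeed $\coh^*(A,M)=0$.) To salvage your route you would have to add the hypothesis that $f$ splits as a map of $A$-bimodules, which is genuinely stronger than what the lemma assumes and is unavailable in its applications.
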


\begin{proof}
Let $M$ be a finite dimensional $A$-module.
The coinduction functor $\Hom_A( _{A}B , - )$ is right adjoint to restriction from $B$ to $A$ along $f$.
Since $B$ is projective as an $A$-module, a projective
resolution of $k$ as a $B$-module restricts to a projective resolution of $k$ as an $A$-module.
Thus there is an isomorphism
\[
    \Ext^*_B(k, \Hom_A( _{A}B, M)) 
    \cong  \Ext^*_A(k, M).
\]
Under this isomorphism and restriction from
$\Ext^*_B(k,k)$ to $\Ext^*_A(k,k)$, 
the action of $\Ext^*_B(k,k)$ on $\Ext^*_B(k,\Hom_A( _{A}B,M))$ restricts to the action of $\Ext^*_A(k,k)$ on $\Ext^*_A(k,M)$. Under the assumption that $B$ satisfies {\bf (hfg)}, $\Ext^*_B(k,\Hom_A ( _{A}B,M))$ is a noetherian module over $\Ext^*_B(k,k)$, and so $\Ext^*_A(k,M)$ is a noetherian module over $\Ext^*_A(k,k)$.
\end{proof}

\begin{remark}
The same conclusion does not hold for {\bf(fg)}. For example, take $A=B \times C$ where $B$ satisfies condition {\bf(fg)} but $C$ does not. Then $B$ is projective as a left and right module over $A$ via the natural projection $f: A \to B$. Here $B$ satisfies {\bf(fg)} but $A$ does not.
\end{remark}

The next result was first proved in \cite{NP2018} in the context of finite tensor categories with respect to surjective tensor functors. We provide here an alternate proof in the special case of categories of modules over Hopf algebras. 

\begin{prop}
\label{prop:ext}
Let $A\subset B$ be an extension of finite dimensional Hopf algebras over a field $k$. If $B$ satisfies {\rm \textbf{(hfg)}}, then so does $A$. 
\end{prop}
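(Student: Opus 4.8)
The plan is to reduce the statement about the Hopf subalgebra $A \subset B$ to the already-established Lemma~\ref{lem:FGCSUR}, whose hypothesis is that the map of augmented algebras makes $B$ projective as a module over the smaller algebra. The key structural input I would invoke is the classical Nichols--Zoeller theorem: for a finite dimensional Hopf algebra extension $A \subset B$, the algebra $B$ is free as a (left and right) $A$-module. In particular $B$ is projective as an $A$-module via the inclusion $f : A \hookrightarrow B$. The augmentation structure is compatible because the Hopf algebra inclusion respects counits, so the counit of $B$ restricts to the counit of $A$ and $f$ is a map of augmented $k$-algebras.

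With that freeness in hand, the argument is essentially immediate. First I would note that $f : A \to B$ is a map of finite dimensional augmented $k$-algebras. Second, I would apply Nichols--Zoeller to conclude $B$ is free, hence projective, as a right $A$-module. Third, since $B$ satisfies \textbf{(hfg)} by assumption, Lemma~\ref{lem:FGCSUR} applies verbatim and yields that $A$ satisfies \textbf{(hfg)}. So the entire proposition follows by checking the hypotheses of the earlier lemma rather than re-running the spectral-sequence or adjunction machinery.

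The only point requiring genuine care, and the step I expect to be the main obstacle, is verifying that the module-theoretic hypothesis of Lemma~\ref{lem:FGCSUR} is met in exactly the form needed. The lemma asks for projectivity as a \emph{right} $A$-module, while various formulations of Nichols--Zoeller are stated for left modules or for one-sided freeness; I would make sure to cite the two-sided version (or pass through the antipode to convert left-freeness to right-freeness). One should also confirm that the freeness is as a module over the image $f(A)$ identified with $A$, which is automatic for an honest subalgebra inclusion. Beyond this bookkeeping, no new analysis is needed: the coinduction/restriction adjunction and the resulting isomorphism $\Ext^*_B(k,\Hom_A({}_A B, M)) \cong \Ext^*_A(k,M)$ are already carried out inside the proof of Lemma~\ref{lem:FGCSUR}, so the present proof can simply invoke that lemma as a black box.
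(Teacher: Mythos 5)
Your proposal is correct and matches the paper's proof exactly: the paper likewise cites the Nichols--Zoeller freeness theorem to conclude that $B$ is free (hence projective) over the Hopf subalgebra $A$, and then invokes Lemma~\ref{lem:FGCSUR} as a black box. Your extra bookkeeping about right- versus left-module freeness and counit compatibility is sound but not treated separately in the paper.
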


\begin{proof}
By the Nichols-Zoeller freeness theorem \cite{NZ89}, every finite dimensional Hopf algebra is free as a module over each of its Hopf subalgebras. The statement now follows from Lemma \ref{lem:FGCSUR}.
\end{proof} 

Next we discuss the relations between conditions {\rm \textbf{(fg)}} and {\rm \textbf{(hfg)}} for a finite dimensional Hopf algebra $A$. Let $S$ denote the antipode map of $A$.

\begin{prop}\label{adjoint}
Let $A$ be a finite dimensional Hopf algebra over a field $k$. Then $A$ satisfies {\rm \textbf{(hfg)}} if and only if $A$ satisfies {\rm \textbf{(fg)}} and $\HH^*(A)$ is a finitely generated module over $\coh^*(A,k)$.
\end{prop}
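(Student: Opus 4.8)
The plan is to prove the two implications separately, exploiting throughout the standard comparison between Hochschild and Hopf-algebra cohomology that is available because $A$ is a finite dimensional Hopf algebra. I would build the argument on three ingredients. First, the isomorphism $\HH^*(A,M)\cong\coh^*(A,M^{\mathrm{ad}})$ of $\coh^*(A,k)$-modules, where $M^{\mathrm{ad}}$ denotes the $A$-bimodule $M$ regarded as a left $A$-module via the adjoint action $a\cdot m=\sum a_1\, m\, S(a_2)$; specializing to $M=A$ gives $\HH^*(A)\cong\coh^*(A,A^{\mathrm{ad}})$ as graded $k$-algebras. Second, the tensor identity $\Ext^*_A(M,N)\cong\coh^*(A,\Hom_k(M,N))$ for finite dimensional $A$-modules $M,N$, where $\Hom_k(M,N)$ carries its natural (finite dimensional) $A$-module structure; this rewrites all the $\Ext$-groups occurring in {\rm\textbf{(fg)}} as cohomology with coefficients in a finite dimensional module. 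Third, the ring homomorphism $\phi\colon\coh^*(A,k)\to\HH^*(A)$ induced by the unit $k\to A^{\mathrm{ad}}$, through which, by \cite{PW}, the action of $\coh^*(A,k)$ on every $\Ext^*_A(M,N)$ factors. A useful preliminary observation is that $\phi$ is a split injection: the augmentation $\epsilon\colon A^{\mathrm{ad}}\to k$ is a map of $A$-modules and its composite with the unit is $\id_k$, so the induced map $\HH^*(A)\to\coh^*(A,k)$ (which is $-\otimes_A k$) retracts $\phi$. Thus $\coh^*(A,k)$ may be regarded as a graded central subalgebra of $\HH^*(A)$.

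For the forward implication, assume {\rm\textbf{(hfg)}}. Taking coefficients $M=k$ shows $\coh^*(A,k)$ is a noetherian ring, hence (being connected graded-commutative with $\coh^0(A,k)=k$) a finitely generated $k$-algebra by Proposition~\ref{prop:1}(ii). Applying {\rm\textbf{(hfg)}} to the finite dimensional module $A^{\mathrm{ad}}$ and transporting along the first ingredient, $\HH^*(A)\cong\coh^*(A,A^{\mathrm{ad}})$ is a noetherian, hence finitely generated, module over $\coh^*(A,k)$; this is exactly the auxiliary finiteness statement in the proposition. Since $\coh^*(A,k)$ is affine over $k$ and $\HH^*(A)$ is module-finite over it, $\HH^*(A)$ is itself an affine $k$-algebra. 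Finally, for finite dimensional $A$-modules $M,N$, the second ingredient together with {\rm\textbf{(hfg)}} shows $\Ext^*_A(M,N)\cong\coh^*(A,\Hom_k(M,N))$ is finitely generated over $\coh^*(A,k)$, and therefore over the larger ring $\HH^*(A)$ via $\phi$ by the third ingredient. By Proposition~\ref{equivfg} these two facts are precisely condition {\rm\textbf{(fg)}}.

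For the reverse implication, assume {\rm\textbf{(fg)}} and that $\HH^*(A)$ is module-finite over $\coh^*(A,k)$. By Proposition~\ref{equivfg} the ring $\HH^*(A)$ is affine over $k$, hence noetherian. Since $\phi$ realizes $\coh^*(A,k)$ as a graded subalgebra over which the affine ring $\HH^*(A)$ is module-finite, the graded Artin--Tate lemma \cite{MR} shows $\coh^*(A,k)$ is itself affine over $k$, hence noetherian by Proposition~\ref{prop:1}(ii). Now let $M$ be any finite dimensional $A$-module. By Proposition~\ref{equivfg}, $\coh^*(A,M)=\Ext^*_A(k,M)$ is finitely generated over $\HH^*(A)$, and since $\HH^*(A)$ is module-finite over $\coh^*(A,k)$ it is finitely generated over $\coh^*(A,k)$ as well; being finitely generated over the noetherian ring $\coh^*(A,k)$, it is a noetherian module. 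This is condition {\rm\textbf{(hfg)}}.

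The main obstacle is not the homological bookkeeping but checking that the identifications above are compatible as \emph{module} structures, not merely as graded vector spaces: one must verify that the isomorphism $\HH^*(A)\cong\coh^*(A,A^{\mathrm{ad}})$ intertwines the $\coh^*(A,k)$-action arising from $\phi$ with the Yoneda action, and that the $\coh^*(A,k)$-action on each $\Ext^*_A(M,N)$ genuinely factors through $\phi$ as recorded in \cite{PW}. A second delicate point is the descent of the noetherian (affine) property from $\HH^*(A)$ to the subring $\coh^*(A,k)$ in the reverse direction; this is exactly where the splitting of $\phi$ is essential, for without injectivity the Artin--Tate argument would only control the image $\phi(\coh^*(A,k))$ rather than $\coh^*(A,k)$ itself.
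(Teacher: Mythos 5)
Your proof is correct, and it rests on the same three pillars as the paper's argument --- comparison of Hochschild and Hopf cohomology via the adjoint action, the factoring of the $\coh^*(A,k)$-action through $\HH^*(A)$ from \cite{PW}, and a graded Artin--Tate descent --- but the execution is genuinely different in the forward direction. The paper works with the adjoint pair $\mathcal F=A^e\otimes_A-$ and $\mathcal G=\Hom_{A^e}(\!_{A^e}A^e_A,-)$ between $\text{Mod}(A)$ and $\text{Mod}(A^e)$ and invokes \cite[Proposition 9.1(c),(d)]{ESW} to transfer noetherianity of $\coh^*(A,\mathcal G(M))$ to $\HH^*(A,M)$ for an \emph{arbitrary} finite $A$-bimodule $M$, thereby verifying {\rm \textbf{(fg)}} in its original bimodule form; you instead verify the equivalent condition (iii) of Proposition~\ref{equivfg}, converting all bimodule coefficients into one-sided ones via the tensor identity $\Ext^*_A(M,N)\cong\coh^*(A,\Hom_k(M,N))$ and then pushing finite generation from $\coh^*(A,k)$ up to $\HH^*(A)$ along $\phi$. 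This buys a more self-contained argument (no appeal to the external machinery of \cite{ESW}) at the cost of leaning on Proposition~\ref{equivfg}. In the converse direction the two arguments essentially coincide: the paper's passage through the trivial-right-action bimodule $M^{\rm tr}$, giving $\HH^*(A,M^{\rm tr})\cong\coh^*(A,M)$, is your direct observation that $\coh^*(A,M)=\Ext^*_A(k,M)$ is covered by {\rm \textbf{(fg)}}, followed by the same descent along module-finiteness and \cite[Lemma 7.3]{PW}. Your explicit remark that the unit and counit of $A^{\rm ad}$ split $\phi$ is a welcome sharpening: the paper secures injectivity only implicitly via the decomposition $\HH^*(A)\cong\coh^*(A,k)\oplus\coh^*(A,I)$ of \cite[Lemma 7.2]{PW}, cited only later in Proposition~\ref{equivfg*}, and you are right that without injectivity Artin--Tate would control merely the image of $\phi$. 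One cosmetic caveat applies equally to both proofs: the literal hypothesis $Z_0=A_0$ of Proposition~\ref{prop:1}(3) fails when $\HH^0(A)=Z(A)$ properly contains $k=\coh^0(A,k)$, but since $Z(A)$ is finite dimensional over $k$ the ungraded-normalized Artin--Tate lemma of \cite{MR}, which you invoke directly, closes this gap.
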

\begin{proof}
Denote by $\text{Mod}(A)$ and $\text{Mod}(A^e)$ the categories of left $A$-modules and $A$-bimodules, respectively. There is a natural pair of adjoint functors 
$$\mathcal F=A^e\otimes_A-:\text{Mod}(A)\to \text{Mod}(A^e) \quad \text{ and } \quad \mathcal G=\Hom_{A^e}(\!_{A^e}A^e_A,-):\text{Mod}(A^e)\to \text{Mod}(A),
$$
where $A^e$ is viewed as a right $A$-module via the embedding $\delta: A\rightarrow A^e$ such that $\delta(a) = \sum a_1\ot S(a_2)$ for any $a\in A$. Note that $A^e$ is a free right $A$-module by the fundamental theory of Hopf modules (e.g., \cite[Lemma 2.3(ii)]{LOYW}). So $\mathcal F$ and $\mathcal G$ are both exact and preserve projective and injective objects, respectively. And there is an isomorphism of $A^e$-modules $A \cong A^e\ot_A k=\mathcal F(k)$~\cite[Lemma 7.1]{PW}.

Suppose $A$ satisfies {\rm \textbf{(hfg)}}. Apply \cite[Proposition 9.1(c)]{ESW} where $C=k$, $\mathcal F(k)=A$ and $D=M$ as any finite $A$-bimodule. Then $\HH^*(A,M)$ is finitely generated over $\HH^*(A)$ since $\coh^*(A, \mathcal G(M))$ is finitely generated over $\coh^*(A,k)$ by {\rm \textbf{(hfg)}}. Now letting $D=A$, Hochschild cohomology $\HH^*(A)=\coh^*(A, \mathcal G(A))$ is finitely generated over $\coh^*(A,k)$. Moreover, Proposition \cite[Proposition 9.1(d)]{ESW} implies that $\HH^*(A)$ is noetherian. 

Conversely, we can view any finite $A$-module $M$ as a bimodule over $A$ by equipping it with the trivial right action, which is denoted by $M^{\rm tr}$. It is clear that $\mathcal F(M^{\rm tr})=M$. By {\rm \textbf{(fg)}}, we know $\HH^*(A,M^{\rm tr})\cong\coh^*(A,M)$ is finitely generated over $\HH^*(A)$. The action of $\coh^*(A,k)$ factors through that of $\HH^*(A)$~\cite[Lemma 7.3]{PW} and $\HH^*(A)$ is a finitely generated module over $\coh^*(A,k)$. Hence $\coh^*(A,M)$ is finitely generated over $\coh^*(A,k)$. Proposition \ref{prop:1}(3) implies that $\coh^*(A,k)$ is noetherian. So $A$ satisfies {\rm \textbf{(hfg)}}. 
\end{proof}


\section{On the equivalency of finite generation conditions for Hopf algebras}
\label{sec:fg conditions}

\numberwithin{equation}{section}

In this section, we provide some equivalent descriptions of the finite generation conditions on the cohomology of any finite dimensional Hopf algebra. First of all, we assert that the assumption {\rm \textbf{(hfg)}} is preserved under any field extension: 

\begin{lemma}
\label{Rem:ext}
Let $A$ be a finite dimensional Hopf algebra over a field $k$, and let $K$ be any field extension of $k$. Then $A$ satisfies {\rm \textbf{(hfg)}} if and only if $A'=A\otimes_k K$ satisfies {\rm \textbf{(hfg)}}.
\end{lemma}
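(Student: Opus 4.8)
The plan is to deduce both directions from faithfully flat base change along $k\hookrightarrow K$: an $\Ext$-base-change isomorphism transports the finite generation statements over $A'$ to statements over $A$, after which I descend finiteness between the two ground fields. First I would set up the computational tool. Since $A$ is finite dimensional, the trivial module $k$ has a resolution $P_\bullet\to k$ by finitely generated projective $A$-modules; as $K$ is flat over $k$, the complex $P_\bullet\otimes_k K$ is a projective resolution of $K$ over $A'=A\otimes_k K$, and finite generation of each $P_i$ gives $\Hom_{A'}(P_i\otimes_k K,\,M\otimes_k K)\cong\Hom_A(P_i,M)\otimes_k K$. Taking cohomology (using flatness again) yields, for every finite dimensional $A$-module $M$, a natural isomorphism of graded modules
\[
\coh^*(A',\,M\otimes_k K)\;\cong\;\coh^*(A,M)\otimes_k K,
\]
compatible with the action of $\coh^*(A',K)\cong\coh^*(A,k)\otimes_k K$.

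Next I would record the descent facts I need. Using Proposition~\ref{prop:1}(2) to pass between ``$k$-affine'' and ``noetherian'', together with faithfully flat descent of the noetherian property, a connected graded commutative algebra $R$ with $R_0=k$ is a finitely generated $k$-algebra if and only if $R\otimes_k K$ is a finitely generated $K$-algebra; and a graded $R$-module $W$ is finitely generated if and only if $W\otimes_k K$ is finitely generated over $R\otimes_k K$. Granting these, the implication ``$A'$ satisfies \textbf{(hfg)} $\Rightarrow A$ satisfies \textbf{(hfg)}'' is immediate and clean: $\coh^*(A',K)=\coh^*(A,k)\otimes_k K$ is noetherian, hence $\coh^*(A,k)$ is $k$-affine by descent; and for any finite dimensional $A$-module $M$ the $A'$-module $M\otimes_k K$ is finite dimensional, so $\coh^*(A,M)\otimes_k K\cong\coh^*(A',M\otimes_k K)$ is finitely generated over $\coh^*(A',K)$, whence $\coh^*(A,M)$ is finitely generated over $\coh^*(A,k)$ by descent. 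Only base-changed modules enter here.

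The forward implication is where the difficulty lies, since an arbitrary finite dimensional $A'$-module need not be base-changed from $A$. Finite generation of $\coh^*(A',K)$ follows as above. For the module condition I would first reduce, via the long exact sequence for $\coh^*(A',-)$ and the two-out-of-three behaviour of finite generation over the noetherian ring $\coh^*(A',K)$ along a composition series, to proving that $\coh^*(A',S')$ is finitely generated for every simple $A'$-module $S'$. Each such $S'$ occurs as a constituent of $S\otimes_k K$ for some simple $A$-module $S$, and $\coh^*(A',S\otimes_k K)\cong\coh^*(A,S)\otimes_k K$ is finitely generated by hypothesis. Whenever $\End_A(S)\otimes_k K$ is semisimple --- in particular whenever $K/k$ is separable or $k$ is perfect --- the module $S\otimes_k K$ is semisimple over $A'$, so $S'$ is a direct summand and inherits finite generation as a summand of a finitely generated module over a noetherian ring.

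The main obstacle is precisely the inseparable case: if $\End_A(S)\otimes_k K$ is not semisimple then $S'$ appears in $S\otimes_k K$ only as a composition factor, and finite generation need not pass to composition factors. To handle this I would factor $K/k$ into a separable extension followed by a purely inseparable one; the separable step is covered above, and for finite separable extensions one may alternatively view $A\subset A'$ as a Frobenius extension and invoke Lemma~\ref{lem:FGCSUR} as in Proposition~\ref{prop:ext}. The purely inseparable step is the crux: here I would try to exploit the self-injectivity of the finite dimensional Hopf algebra $A'$ (so that projectives are injective and have cohomology concentrated in degree zero) together with the $p$-power Frobenius map, aiming to show that the finite generation already established on $(A/\mathrm{rad}(A))\otimes_k K$ propagates across the nilpotent ideal $\mathrm{rad}(A')/(\mathrm{rad}(A)\otimes_k K)$ to the genuine simple $A'$-modules. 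I expect this propagation to be the delicate point; I note that in all applications in this paper the relevant ground fields are perfect (characteristic zero or finite), so the clean separable case already suffices there.
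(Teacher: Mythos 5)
Your backward direction ($A'$ satisfies \textbf{(hfg)} $\Rightarrow$ $A$ does) is correct, and your graded faithfully flat descent is in fact a bit more elementary than the paper's route, which instead verifies finite presentation of $\coh^*(A,M)$ via Lenzing's criterion that $\Hom_{\coh^*(A,k)}(\coh^*(A,M),-)$ preserve inductive limits; your version is legitimate because $\coh^*(A,M)$ is locally finite, so a graded Nakayama-type argument descends finite generation along $-\otimes_kK$. The forward direction, however, has a genuine gap: the lemma asserts the statement for an \emph{arbitrary} field extension of an \emph{arbitrary} field, and you explicitly leave the purely inseparable case unresolved, offering only a speculative plan (self-injectivity of $A'$ plus Frobenius, "aiming to show" that finite generation propagates across the nilpotent ideal $\mathrm{rad}(A')/(\mathrm{rad}(A)\otimes_kK)$). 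That is not a proof, and the obstruction you yourself identify is real: finite generation of cohomology over a noetherian ring passes to direct summands but not to composition factors, so knowing $\coh^*(A',S\otimes_kK)$ is finitely generated says nothing directly about a mere constituent $S'$. A separate slip: your "alternative" appeal to Lemma~\ref{lem:FGCSUR} for finite separable extensions goes the wrong way --- that lemma transfers \textbf{(hfg)} from the larger algebra $B$ \emph{down} to a subalgebra (as in Proposition~\ref{prop:ext}), i.e., it would give $A'$ \textbf{(hfg)} $\Rightarrow$ $A$ \textbf{(hfg)}, not the ascent you need here.

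The missing idea, which is how the paper closes exactly this hole, is to engineer the summand structure \emph{before} the big base change, using a finite splitting extension. First, by your own backward direction it suffices to treat $K=\overline{K}$ algebraically closed. Second, for any \emph{finite} extension $F/k$ (separable or not), \textbf{(hfg)} ascends by restriction rather than semisimplicity: for a finite $A\otimes_kF$-module $M'$, the restriction $_AM'$ is a finite $A$-module, and since $A\otimes_kF$ is free of finite rank over $A$, tensor-hom adjunction gives $\coh^*(A,{}_AM')\cong\coh^*(A\otimes_kF,M')$ compatibly with $\coh^*(A\otimes_kF,F)\cong\coh^*(A,k)\otimes_kF$; no separability enters. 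Third, choose $F/k$ finite so that $B:=A\otimes_kF$ has $B/\mathrm{rad}(B)$ a direct sum of matrix algebras over $F$ (a splitting field of $A/\mathrm{rad}(A)$); such a finite $F$ exists even over imperfect $k$, with any inseparability absorbed into $\mathrm{rad}(B)$. Because $B/\mathrm{rad}(B)$ is \emph{split} semisimple, $(B/\mathrm{rad}(B))\otimes_F\overline{K}$ remains semisimple, whence $\mathrm{rad}(B)\otimes_F\overline{K}=\mathrm{rad}(A')$ and every simple $A'$-module is a direct summand --- not merely a composition factor --- of $(B/\mathrm{rad}(B))\otimes_F\overline{K}\cong A'/\mathrm{rad}(A')$. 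Since $B$ satisfies \textbf{(hfg)} by the finite-extension step, $\coh^*(A',A'/\mathrm{rad}(A'))\cong\coh^*(B,B/\mathrm{rad}(B))\otimes_F\overline{K}$ is finitely generated over $\coh^*(A',\overline{K})$, finite generation passes to each simple as a summand, and your composition-series reduction then finishes. This completely bypasses the inseparable "crux" on which your proposal stalls.
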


\begin{proof}
Suppose $A'$ satisfies {\rm \textbf{(hfg)}}. Let $M$ be a finite module over $A$, and $M'=M\otimes_kK$ the corresponding finite module over $A'$.  Since $K$ is flat over $k$, there is a graded ring isomorphism  $\coh^*(A',K)\cong \coh^*(A,k)\otimes_k K$. Hence there is an embedding of categories from $\coh^*(A,k)$-modules to $\coh^*(A',K)$-modules given by the tensor product $-\otimes_kK$. By hypothesis, $\coh^*(A',K)$ is noetherian, and it now follows that $\coh^*(A,k)$ is noetherian. 

It remains to show that $\coh^*(A,M)$ is finitely generated over $\coh^*(A,k)$. It suffices to show that $\coh^*(A,M)$ is finitely presented. By a result of Lenzing \cite[Satz 3]{Len}, it is equivalent to show that $\Hom_{\coh^*(A,k)}(\coh^*(A,M),-)$ preserves any inductive limit $\varinjlim M_i$ in the category of $\coh^*(A,k)$-modules. There is a natural map
\[
\xymatrix{
\varinjlim \Hom_{\coh^*(A,k)}\left(\coh^*(A,M),\,M_i\right)\ar[rr]^-{f}&& \Hom_{\coh^*(A,k)}\left(\coh^*(A,M),\,\varinjlim M_i\right).
}
\]
After applying $-\otimes_kK$, it becomes 
\[
\xymatrix{
\varinjlim \Hom_{\coh^*(A',K)}\left(\coh^*(A',M'),\,M_i'\right)\ar[rr]^-{f\otimes_kK}&& \Hom_{\coh^*(A',K)}\left(\coh^*(A',\,M'),\,\varinjlim M_i'\right),
}
\]
where $M_i'=M_i\otimes_kK$. The map $f\ot_kK$ is an isomorphism since $\coh^*(A',M')$ is finitely generated over the noetherian algebra $\coh^*(A',K)$ and hence it is finitely presented and preserves the inductive limit $\varinjlim M_i'$ in the category of $\coh^*(A',K)$-modules. This implies that $f$ is an isomorphism, since $-\otimes_kK$ is exact. 

On the other hand, suppose $A$ satisfies {\rm \textbf{(hfg)}}. We first deal with the case when $K/k$ is a finite field extension. By hypothesis and by the graded commutativity of $\coh^*(A,k)$, it is easy to see that $\coh^*(A',K)\cong\coh^*(A,k)\otimes_kK$ is noetherian and $K$-affine. Let $M'$ be a finite module over $A'$, and $M=\!_AM'$ be the restriction of $M'$ to $A$, which is again a finite $A$-module since $A'=A\otimes_kK$ is a free $A$-module of rank $[K:k]$. Then $\coh^*(A,M)$ is finitely generated over $\coh^*(A,k)$ since $A$ satisfies {\rm \textbf{(hfg)}}. Moreover, by the tensor-hom adjoint pair, there are isomorphisms of graded vector spaces: 
\begin{align*} \coh^*(A,M) &~\cong~\Ext^*_{A}(k,\Hom_{A'}(\!_{A'}{A'}_{A},M')) \\
&~\cong~\Ext^*_{A'}(A'\otimes_{A}k,M') \\
&~\cong~\Ext^*_{A'}(K,M')~=~\coh^*(A',M').
\end{align*} 
Thus, $\coh^*(A',M')$ is finitely generated over $\coh^*(A',K)=\coh^*(A,k)\otimes_kK$. This implies that $A'$ satisfies {\rm \textbf{(hfg)}} whenever $K/k$ is a finite field extension. 

In general, say $k\subset K$ is any field extension. By the previous discussion, it suffices to show that $A'=A\otimes_k\overline{K}$ satisfies {\rm \textbf{(hfg)}}, where $\overline{K}$ is the algebraic closure of $K$. We can argue similarly that $\coh^*(A', \overline{K})=\coh^*(A,k)\otimes_k\overline{K}$ is noetherian and finitely generated. Since $A$ is finite dimensional, there exists a finite field extension $F$ of $k$ such that the quotient $A\otimes_kF/\text{rad}(A\otimes_kF)$ is a direct sum of matrix algebras over $F$ (for instance, we can take $F$ to be the splitting field of $A/\text{rad}(A)$). Now let $B=A\otimes_kF$, so $B\otimes_F\overline{K}=A'$ and $(B/\text{rad}(B))\otimes_F\overline{K}\cong A'/\text{rad}(A')$. Since $F/k$ is a finite field extension, $B$ satisfies {\rm \textbf{(hfg)}} by the previous discussion. Therefore, $\coh^*(B,B/\text{rad}(B))$ is finitely generated over $\coh^*(B,F)$ and 
$$
\coh^*(B,B/\text{rad}(B))\otimes_F\overline{K}~\cong~\coh^*(B\otimes_F\overline{K},(B/\text{rad}(B))\otimes_F\overline{K})~\cong~\coh^*(A',A'/\text{rad}(A'))
$$
is finitely generated over $\coh^*(A',\overline{K})=\coh^*(B,F)\otimes_F\overline{K}$. This implies that $\coh^*(A',S)$ is finitely generated over $\coh^*(A',\overline{K})$ for any $A'$-simple  $S$. Finally, by filtering any finite dimensional $A'$-module by its composition series, we see that $A'$ satisfies {\rm \textbf{(hfg)}}.
\end{proof}

\begin{remark}
\label{rem:FE}
Using a similar argument, we can show that for any finite dimensional (not necessarily Hopf) algebra $A$ over a field $k$, and any field extension $K$ of $k$, $A$ satisfies the Hochschild condition {\rm \textbf{(fg)}} if and only if $A'=A\otimes_k K$ satisfies {\rm \textbf{(fg)}}.
\end{remark}

In the classical case of group cohomology, it is well-known that the group algebra $kG$ of any finite group $G$ satisfies \textbf{(hfg)}. More generally, Evens proved in \cite[Theorem 8.1]{Evens} that for any affine commutative algebra $R$ which admits a $kG$-module algebra structure, $\coh^*(G,M):= \coh^*(kG,M)$ is a finitely generated module over $\coh^*(G,R)$ for any finitely generated module $M$ over the skew group ring $R\#kG$. This nice property of finite group cohomology prompts us to seek an analogy in the reign of finite dimensional Hopf algebras. Thus we consider the following generalized noetherian assumption on a finite dimensional Hopf algebra $A$, where $R\# A$ denotes a smash product algebra (see~\cite{MO93}): 

\vspace{0.5em}
\begin{itemize}
\item[\textbf{(hfg*)}:] $\coh^*(A,M)$ is a noetherian module over $\coh^*(A,R)$ for any affine commutative $A$-module algebra $R$ and any finitely generated $R\#A$-module $M$.
\end{itemize}
\vspace{0.5em}

It is clear that \textbf{(hfg*)} implies \textbf{(hfg)} by taking $R=k$. The converse implication will also be true under an additional assumption, namely the integral property of $A$ (see~Section~\ref{subsec:integral}) as we show in the following lemma.

\begin{lemma}
\label{lem:hfg}
Let $A$ be a finite dimensional Hopf algebra over a field $k$. Suppose $A$ has the integral property. Then the following are equivalent.
\begin{enumerate} 
\item $A$ satisfies {\rm \textbf{(hfg)}}.
\item $A$ satisfies {\rm \textbf{(hfg*)}}. 
\item $\coh^*(A,R)$ is finitely generated for any affine commutative $A$-module algebra $R$.
\item Let $T=\bigoplus_{i\ge 0}T_i$ be a finitely generated graded noetherian $A$-module algebra that is a finite module over some graded central $A$-module subalgebra $Z$ of $T$, and let $M$ be a finitely generated module over $T\#A$. Then $\coh^*(A,M)$ is noetherian over $\coh^*(A,T)$. 
\end{enumerate}
\end{lemma}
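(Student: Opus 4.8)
The plan is to prove the equivalences through the cycle (i)$\Rightarrow$(iii)$\Rightarrow$(ii)$\Rightarrow$(i), supplemented by (iii)$\Rightarrow$(iv)$\Rightarrow$(ii) to fold in the fourth condition. Two of these arrows are formal. The implication (ii)$\Rightarrow$(i) is the case $R=k$ of \textbf{(hfg*)}, already noted before the lemma. For (iv)$\Rightarrow$(ii), one specializes to $T=R$ concentrated in degree $0$ with $Z=R$: an affine commutative algebra is noetherian, is trivially a finite module over itself, and a finitely generated $R\#A$-module is exactly the input of \textbf{(hfg*)}, so the conclusion of (iv) becomes precisely \textbf{(hfg*)}.

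For (iii)$\Rightarrow$(ii) I would use a square-zero extension. Given an affine commutative $A$-module algebra $R$ and a finitely generated $R\#A$-module $M$, note that $M$ is finitely generated over $R$ because $R\#A\cong R\ot A$ is a finite free $R$-module. Form the trivial extension $S=R\oplus M$ with $M\cdot M=0$; it is again an affine commutative $A$-module algebra, and the $A$-module algebra maps $R\hookrightarrow S\twoheadrightarrow R$ split off $\coh^*(A,R)$ inside $\coh^*(A,S)\cong\coh^*(A,R)\oplus\coh^*(A,M)$, with $\coh^*(A,M)$ a square-zero ideal. By (iii), $\coh^*(A,S)$ is finitely generated; being graded commutative ($A$ is a Hopf algebra and $S$ is commutative) with finitely generated (hence noetherian) degree-$0$ part $\coh^0(A,S)=S^A$, it is noetherian by Proposition~\ref{prop:1}(2). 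Therefore its ideal $\coh^*(A,M)$ is finitely generated over $\coh^*(A,S)$, and as this ideal squares to zero it is a finitely generated---hence noetherian---module over $\coh^*(A,S)/\coh^*(A,M)\cong\coh^*(A,R)$. This is \textbf{(hfg*)}; taking $R=k$ recovers (iii)$\Rightarrow$(i) as well.

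The heart of the matter is (i)$\Rightarrow$(iii), and this is where the integral property enters. Since $R$ is affine and integral over $R^A$, it is a finite module over $R^A$, and $R^A$ is itself affine by the Artin--Tate lemma. Choosing $R^A$-module generators of $R$ that include $1$ and letting $W$ be the finite-dimensional $A$-submodule they span, one gets $R=R^A\cdot W$, i.e.\ a surjection of $A$-modules $R^A\ot_k W\twoheadrightarrow R$ in which $A$ acts only on $W$. Because $R^A\ot_k W$ is a direct sum of copies of $W$, and \textbf{(hfg)} makes $\coh^*(A,W)$ finite over $\coh^*(A,k)$, we obtain that $\coh^*(A,R^A\ot_k W)\cong R^A\ot_k\coh^*(A,W)$ is a finitely generated module over the affine algebra $\Lambda:=\coh^*(A,k)\ot_k R^A$. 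If one can transfer this finiteness across the surjection to conclude that $\coh^*(A,R)$ is a finite $\Lambda$-module, then---$\Lambda$ being affine---$\coh^*(A,R)$ is finitely generated, giving (iii). The implication (iii)$\Rightarrow$(iv) runs along the same lines: Proposition~\ref{prop:1}(3) makes the central subalgebra $Z$ affine and module-finite in $T$, one applies the same transfer to deduce finite generation of $\coh^*(A,T)$ from that of $\coh^*(A,Z)$, and the passage from $T$ to an arbitrary finitely generated $T\#A$-module $M$ is handled by the square-zero device of the previous paragraph applied to $T\oplus M$ (still finite over the central $Z$).

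I expect the transfer step to be the main obstacle. The difficulty is that $\coh^*(A,-)=\Ext^*_A(k,-)$ is not right exact, so the surjection $R^A\ot_k W\twoheadrightarrow R$ does not induce a surjection on cohomology, and finiteness over $\Lambda$ does not follow formally. I would attack it by d\'evissage: $R$ is a finitely generated module over the central, $A$-stable, affine, noetherian subring $R^A$ (equivalently a finitely generated $R^A\#A$-module, since $R^A$ is central in $R\#A$), so one filters $R$ by $A$-stable $R^A$-submodules and runs a noetherian induction along the long exact sequences in $\coh^*(A,-)$, using that $\Lambda$ is noetherian to control the successive subquotients. This transfer---finiteness of cohomology along a module-finite central extension of coefficient algebras---is exactly the point at which the integral property is indispensable, and it can alternatively be supplied by the graded-center finiteness (the Hilton--Eckmann argument) developed later in the paper.
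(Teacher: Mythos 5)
Your skeleton is sound in its easy arrows ((ii)$\Rightarrow$(i), (iv)$\Rightarrow$(ii), and the square-zero device, which is exactly the paper's (iii)$\Rightarrow$(i)), and you have correctly located the crux in the transfer step---but that step is not actually carried out, and your proposed d\'evissage does not close. Filtering $R$ by $A$-stable $R^A$-submodules produces subquotients that are again arbitrary finitely generated $R^A\otimes A$-modules, so the long exact sequence argument requires the very statement being proved for each subquotient: an infinite regress that noetherianity of $\Lambda=R^A\otimes\coh^*(A,k)$ alone cannot break. The paper's proof supplies the missing mechanism, and in fact proves the stronger implication (i)$\Rightarrow$(ii) directly (for an arbitrary finitely generated $R\#A$-module $M$, not just $M=R$), making your detour through (iii) unnecessary. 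Concretely: after extending scalars via Lemma~\ref{Rem:ext} so that $A/\mathrm{rad}(A)$ splits, one reduces by induction on generators to a cyclic module $N=(R^A\otimes A)/I$, filters $N$ by the images of $R^A\otimes V_i$ for a composition series $0=V_0\subset\cdots\subset V_n=A$ of the regular module, and---this is the step your sketch lacks---identifies each subquotient, a priori just some quotient of $R^A\otimes S_i$, as $(R^A/J_i)\otimes S_i$ via the Morita/matrix-algebra observation that $R^A\otimes S_i$ is the column space over $M_{d_i}(R^A)$. Only then is the cohomology of each layer computable, $(R^A/J_i)\otimes\coh^*(A,S_i)$, finite over $\Lambda$ by \textbf{(hfg)}, and the three-term exact sequences over the noetherian $\Lambda$ climb the finite filtration.

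Your (iii)$\Rightarrow$(ii) has a second, independent gap: the parenthetical claim that $\coh^*(A,S)$ is graded commutative because ``$A$ is a Hopf algebra and $S$ is commutative'' is unjustified, and in general false, when $A$ is not cocommutative---the twist map is not an $A$-module map, which is precisely why Section~\ref{sec:homPI} of the paper develops the suspended monoidal category machinery to establish only that the image of $\coh^*(A,k)$ (or of $\coh^*(A,W)$ for $W$ with trivial $A$-action) is \emph{central} in $\coh^*(A,R)$, never that $\coh^*(A,R)$ is commutative. The paper's own square-zero step survives for $R=k$ because the graded-symmetric bimodule property of $\coh^*(A,M)$ over $\coh^*(A,k)$ is supplied by Theorem~\ref{thm:gc}; for nontrivial $R$ no analogue is available, so you cannot invoke Proposition~\ref{prop:1}(2) to get noetherianity of $\coh^*(A,S)$, and without it a finitely generated algebra need not have its square-zero ideal $\coh^*(A,M)$ finitely generated as a one-sided $\coh^*(A,R)$-module. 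The same defect, compounded, breaks your (iii)$\Rightarrow$(iv): in (iv) the algebra $T$ is not assumed commutative at all, so $T\oplus M$ is not a commutative $A$-module algebra and (iii) simply does not apply to it. The paper instead derives (iv) from (ii) by applying \textbf{(hfg*)} to the affine commutative algebra $Z^{\rm ev}$ (even degrees of the graded central subalgebra, made honestly commutative) and noting that the $\coh^*(A,Z^{\rm ev})$-action on $\coh^*(A,M)$ factors through $\coh^*(A,T)$.
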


\begin{proof}
(i)$\Rightarrow$(ii): Without loss of generality, by Lemma \ref{Rem:ext}, we can replace $k$ by a finite field extension and assume $A/{\rm rad}(A)$ is a direct sum of matrix algebras over $k$. Let $R$ be an affine commutative $A$-module algebra. 

We first treat the case when $A$ acts on $R$ trivially such that $R\# A\cong R\ot A$. Let $M$ be an $R\ot A$-module.  If $M$ is cyclic, we can write $M=(R\otimes A)/I$ for some left ideal $I$ of $R\otimes A$. Since $A$ is finite dimensional, there is a composition series $0=V_0\subset V_1\subset V_2\cdots \subset V_n=A$ of left $A$-modules, where each factor $S_{i}=V_{i}/V_{i-1}$ is a simple $A$-module. This induces a finite filtration $0=M_0\subset M_1\subset M_2\cdots \subset M_n=M$ on $M$ after applying $R\otimes_k-$, where each factor $M_{i}/M_{i-1}$ is a quotient module of $R\otimes S_{i}$. Note that $R\otimes S_{i}$ is a cyclic module over $R\otimes (A/{\rm Ann}(S_{i}))\cong R\otimes M_{d_{i}}(k)\cong M_{d_{i}}(R)$. By replacing $R\otimes A$ with $M_{d_i}(R)$, one sees that $R\otimes S_i\subset M_{d_i}(R)$ is just column matrices $R^{d_i}$. Then by some matrix multiplication, $M_i/M_{i-1}\cong (R/J_i)\otimes S_i$ for some ideal $J_i$ of $R$. Since $A$ acts on $R$ trivially, by the universal coefficients theorem, $\coh^*(A,R)\cong R\otimes \coh^*(A,k)$, and so is finitely generated and noetherian. Moreover, $\coh^*(A,(R/J_i)\otimes S_i)=(R/J_i)\otimes \coh^*(A,S_i)$ is finitely generated over $\coh^*(A,R)=R\otimes \coh^*(A,k)$ since $\coh^*(A,S_i)$ is finitely generated over $\coh^*(A,k)$ by {\rm \textbf{(hfg)}}. For each $i$, by applying $\coh^*(A,-)$ to the short exact sequence $0\to M_{i-1}\to M_i\to (R/J_i)\otimes S_i\to 0$, we obtain an exact sequence
\[
\xymatrix{
\coh^*(A,M_{i-1})\ar[r]& \coh^*(A,M_i)\ar[r]& \coh^*(A,(R/J_i)\otimes S_i),
}
\]
where we have just shown that $\coh^*(A,(R/J_i)\otimes S_i)$ is finitely generated over $\coh^*(A,R)$. Then induction on $i$ yields that $\coh^*(A,M_i)$ is finitely generated for all $i$. This completes the cyclic case.  In general, we can again induct on the number of minimal generators of $M$ and employ an exact sequence similar to that above to conclude this trivial $A$-action case. 

Finally, if $A$ acts on $R$ arbitrarily, by the integral property of $A$, $R$ is a finitely generated module over the invariant subring $R^A$. Then by the previous discussion, $\coh^*(A,M)$ is finitely generated over $\coh^*(A,R^A)$ and hence is finitely generated over $\coh^*(A,R)$. In particular, when $M=R$, we conclude that $\coh^*(A,R)$ is finitely generated as a module over $\coh^*(A,R^A)=R^A\otimes \coh^*(A,k)$, which is noetherian.  

(iii)$\Rightarrow$(i): By letting $R=k$, we know $\coh^*(A,k)$ is finitely generated and noetherian. For any finite $A$-module $M$, denote $R=k\bigoplus M$ where $M^2=0$. Then one can check that $\coh^*(A,R)$ is finitely generated implies that $\coh^*(A,M)$ is finitely generated over $\coh^*(A,k)$. 

(ii)$\Rightarrow$(iv): Since $A$ satisfies {\rm \textbf{(hfg*)}}, $\coh^*(A,M)$ is noetherian over $\coh^*(A,Z^{{\rm ev}})$. As the action of $\coh^*(A,Z^{{\rm ev}})$ on $\coh^*(A,M)$ factors through that of $\coh^*(A,T)$, it follows that $\coh^*(A,M)$ is noetherian over $\coh^*(A,T)$.

(ii)$\Rightarrow$(iii) and (iv)$\Rightarrow$(ii) are clear. 
\end{proof}

Now we are able to summarize various equivalent finite generation conditions on the cohomology of a finite dimensional Hopf algebra. 

\begin{prop}
\label{equivfg*}
For a finite dimensional Hopf algebra $A$ over a field $k$, the following are equivalent: 
\begin{enumerate}
\item $A$ satisfies {\rm \textbf{(hfg)}}.
\item $A$ satisfies {\rm \textbf{(fg)}} and $\HH^*(A)$ is a finitely generated module over $\coh^*(A,k)$.
\item $\coh^*(A,k)$ is a finitely generated algebra and $\Ext^*_A(A/\rm{rad}(A),A/\rm{rad}(A))$ is a finitely generated module over $\coh^*(A,k)$.  
\item $\coh^*(A,k)$ is a finitely generated algebra and $\Ext^*_A(k,A/\rm{rad}(A))$ is a finitely generated module over $\coh^*(A,k)$.  
\item $\coh^*(A,k)$ is a finitely generated algebra and $\Ext^*_A(M,N)$ is a finitely generated module over $\coh^*(A,k)$ for all pairs of finite $A$-modules $M$ and $N$.
\item $\coh^*(A,k)$ is a finitely generated algebra and $\Ext^*_A(k,M)$ is a finitely generated module over $\coh^*(A,k)$ for any finite $A$-module $M$.
\end{enumerate}
If in addition $A$ has the integral property, each of the above conditions is equivalent to {\rm \textbf{(hfg*)}}. 
\end{prop}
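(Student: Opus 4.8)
The plan is to combine the already-established equivalence (i) $\Leftrightarrow$ (ii), which is Proposition~\ref{adjoint}, with a cycle of implications relating (i), (iii), (iv), (v), and (vi); the final clause about \textbf{(hfg*)} is then immediate from Lemma~\ref{lem:hfg}, since \textbf{(hfg)} $\Leftrightarrow$ \textbf{(hfg*)} under the integral property. My first observation is that in every one of the conditions (i) and (iii)--(vi), the algebra $\coh^*(A,k)$ is finitely generated: in (iii)--(vi) this is assumed outright, while in (i) it follows by taking $M=k$ in \textbf{(hfg)}, which makes $\coh^*(A,k)$ noetherian over itself, and then invoking graded commutativity of $\coh^*(A,k)$ (valid since $A$ is a Hopf algebra) together with $\coh^0(A,k)=\Hom_A(k,k)=k$ and Proposition~\ref{prop:1}(ii). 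Consequently $\coh^*(A,k)$ is a connected graded noetherian ring throughout, so that for $\coh^*(A,k)$-modules the notions ``finitely generated'' and ``noetherian'' coincide, and submodules, quotients, extensions, and direct summands of finitely generated modules are again finitely generated. I will use this freely, and note that it reduces each implication below to its module-finiteness clause.

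Two tools drive the cycle. The first is the rigidity of the category of finite-dimensional $A$-modules: for finite $M,N$ the dual $M^*$ is again finite, the functor $M^*\ot_k-$ is exact, and the adjunction $\Hom_A(M,N)\cong\Hom_A(k,M^*\ot N)$ derives to a natural isomorphism of $\coh^*(A,k)$-modules
\[
\Ext^*_A(M,N)\;\cong\;\coh^*(A,\,M^*\ot N).
\]
The second is d\'evissage: a short exact sequence of finite modules yields a long exact sequence of $\coh^*(A,k)$-modules in $\coh^*(A,-)$, so by inducting along a composition series, $\coh^*(A,L)$ is finitely generated over $\coh^*(A,k)$ for every finite $L$ as soon as it is finitely generated for every simple module. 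Here I will repeatedly use that the semisimple quotient $A/\mathrm{rad}(A)$ contains every simple $A$-module --- in particular the trivial module $k$ --- as a direct summand of left $A$-modules, so that $\Ext^*_A(k,S)$ and, more generally, $\Ext^*_A(S_i,S_j)$ occur as direct summands of $\Ext^*_A(k,A/\mathrm{rad}(A))$ and $\Ext^*_A(A/\mathrm{rad}(A),A/\mathrm{rad}(A))$ respectively.

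With these in hand the implications run as follows. For (i) $\Rightarrow$ (v), \textbf{(hfg)} gives that $\coh^*(A,M^*\ot N)$ is finitely generated over $\coh^*(A,k)$, and the tensor identity transports this to $\Ext^*_A(M,N)$. The specializations (v) $\Rightarrow$ (vi) (put $M=k$) and (v) $\Rightarrow$ (iii) (put $M=N=A/\mathrm{rad}(A)$) are immediate, as are (vi) $\Rightarrow$ (iv) (apply (vi) to the finite module $A/\mathrm{rad}(A)$) and (iii) $\Rightarrow$ (iv) (take the direct summand $\Ext^*_A(k,A/\mathrm{rad}(A))$ of $\Ext^*_A(A/\mathrm{rad}(A),A/\mathrm{rad}(A))$). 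Finally, for (iv) $\Rightarrow$ (i): since each simple $S$ is a summand of $A/\mathrm{rad}(A)$, condition (iv) makes each $\coh^*(A,S)=\Ext^*_A(k,S)$ finitely generated, whence d\'evissage yields that $\coh^*(A,M)$ is finitely generated, hence noetherian, over $\coh^*(A,k)$ for every finite $M$ --- this is exactly \textbf{(hfg)}. Chaining (i) $\Rightarrow$ (v) $\Rightarrow$ (vi) $\Rightarrow$ (iv) $\Rightarrow$ (i) together with (v) $\Rightarrow$ (iii) $\Rightarrow$ (iv) closes the equivalence of (i), (iii)--(vi), and (i) $\Leftrightarrow$ (ii) is Proposition~\ref{adjoint}.

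The step I expect to require the most care is the tensor identity used for (i) $\Rightarrow$ (v): although $\Ext^*_A(M,N)\cong\coh^*(A,M^*\ot N)$ as graded vector spaces is routine from rigidity, I must check that this isomorphism is linear for the $\coh^*(A,k)$-action appearing in the statement (the support-variety action via $-\ot M$), so that finite generation genuinely transfers. This is where the Hopf (tensor-categorical) structure is essential, and it is the only place the general bifunctor $\Ext^*_A(M,N)$ cannot be replaced by a cohomology module by elementary means. Everything else reduces to the noetherian bookkeeping and the summand/d\'evissage arguments recorded above.
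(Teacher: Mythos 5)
Your proof is correct, but it closes the equivalences by a genuinely different route than the paper for conditions (iii) and (v). The paper first proves (i)$\Leftrightarrow$(ii) via Proposition~\ref{adjoint} (as you do), and then obtains (ii)$\Leftrightarrow$(iii)$\Leftrightarrow$(v) by importing the Hochschild-cohomology equivalences of Proposition~\ref{equivfg} (from Erdmann--Holloway--Snashall--Solberg--Taillefer), using the embedding of $\coh^*(A,k)$ as a subalgebra of $\HH^*(A)\cong \coh^*(A,k)\oplus \coh^*(A,I)$ coming from the adjoint representation; only (i)$\Leftrightarrow$(iv) and (i)$\Leftrightarrow$(vi) are done by the composition-series/d\'evissage argument you use. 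You instead stay entirely on the Hopf-cohomology side, making (ii) logically peripheral (attached only through Proposition~\ref{adjoint}) and running the cycle (i)$\Rightarrow$(v)$\Rightarrow$(vi)$\Rightarrow$(iv)$\Rightarrow$(i) together with (v)$\Rightarrow$(iii)$\Rightarrow$(iv), with rigidity supplying $\Ext^*_A(M,N)\cong \coh^*(A,M^*\ot N)$ and summand/d\'evissage arguments doing the rest. The one step you flag --- $\coh^*(A,k)$-linearity of the rigidity isomorphism --- is indeed the crux, but it is exactly the fact the paper itself invokes without proof in Proposition~\ref{prop:hfg}(i) (``the standard actions of $\coh^*(A,k)$ on either one correspond under this isomorphism''), so your reliance on it is no weaker than the paper's. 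What each approach buys: yours is self-contained within $\coh^*(A,-)$ and avoids the Hochschild machinery entirely, at the cost of verifying the action compatibility; the paper's reduces (iii) and (v) to citations of established Hochschild results and, as a byproduct, records that $\HH^*(A)$ is noetherian in each case, which is used elsewhere in the paper. Your bookkeeping (noetherianity of the connected graded ring $\coh^*(A,k)$ via Proposition~\ref{prop:1}(ii), every simple occurring as a summand of $A/\mathrm{rad}(A)$ including $k$ since $\mathrm{rad}(A)\subseteq \ker\varepsilon$, and naturality of the action under direct-sum decompositions) is all sound.
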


\begin{proof}
(i)$\Leftrightarrow$(ii): is Proposition \ref{adjoint}.

(ii)$\Leftrightarrow$(iii)$\Leftrightarrow$(v): These implications follow directly from \cite[Proposition 1.4]{EHSS} (see Proposition \ref{equivfg}), where we take $\coh^*(A,k)$ as a subalgebra of $\HH^*(A) \cong \coh^*(A,A^{\rm ad})=\coh^*(A,k) \oplus \coh^*(A,I)$ \cite[Lemma 7.2]{PW}. Here $A^{\rm ad}$ is the adjoint representation of $A$ and $I$ is the augmentation ideal of $A$. Note that in each case, $\HH^*(A)$ is a finitely generated module over $\coh^*(A,k)$ and hence it is noetherian.

(i)$\Leftrightarrow$(iv): Direction ``$\Rightarrow$" is clear and the other direction comes from filtering a finite $A$-module by its composition series and noting that (iv) implies $\coh^*(A,S)$ is finite generated over $\coh^*(A,k)$ for any simple $A$-module $S$.

(i)$\Leftrightarrow$(vi): It is clear since $\coh^*(A,k)$ is graded commutative. 

Finally, the last statement is a consequence of Lemma~\ref{lem:hfg}.
\end{proof}

\begin{remarks}
\label{rem:hfg}\  
\begin{itemize}
\item[(a)] It is straightforward to check that the condition {\rm \textbf{(hfg)}} is equivalent to the original finite generation condition in \cite{FW2015} which in characteristic $\neq 2$ says that $\coh^{\rm ev}(A,k)$ is finitely generated, and that for any pair of finite $A$-modules $M$ and $N$, $\Ext_A^*(M, N)$ is finitely generated over $\coh^{\rm ev}(A, k)$.
\item[(b)] More generally, {\rm \textbf{(hfg)}} can be stated for any augmented algebra $A$, and one can show that (i)$\Leftrightarrow$(iv)$\Leftrightarrow$(vi) in Proposition \ref{equivfg*} by further requiring $\coh^*(A,k)$ to be noetherian in (vi).
\end{itemize}
\end{remarks}

The equivalent finite generation conditions in Proposition~\ref{equivfg*} allow us to study the finite generation of (Hochschild) cohomology from various perspectives.


\section{A spectral sequence argument for the finite generation conditions}
\label{sec:homPI}

In this section, we use the Hilton-Eckmann argument in a suspended monoidal category to prove that the cohomology ring of $A$ with coefficients in an $A$-module algebra $R$ is finite over its affine graded center under some assumptions of $R$. As applications, we show the finite generation conditions hold for certain filtered, smash and crossed product algebras using the corresponding May and Lyndon-Hochschild-Serre spectral sequences. 

We start with a special case when the smash product is taken with a finite dimensional cocommutative Hopf algebra. We follow the argument first used in \cite[Corollary 3.2.2]{Benson1} for the key step to show that a particular subalgebra lies in the graded center.  

\begin{lemma}
\label{lem:cocom}
Let $A$ be a finite dimensional cocommutative Hopf algebra satisfying {\rm \textbf{(hfg)}} and let $R=\bigoplus_{i\ge 0} R_i$ be a graded $A$-module algebra. If $R$ is a finitely generated noetherian algebra that is a finite module over a graded central $A$-module subalgebra $Z$, then $\coh^*(A,R)=\bigoplus_{i+j\ge 0}\coh^i(A,R_j)$ is a finitely generated noetherian algebra and is a finite module over its graded center (grading by total degree).  
\end{lemma}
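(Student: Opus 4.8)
The plan is to treat two claims separately: the ring-theoretic finiteness of $\coh^*(A,R)$, and the assertion that it is module-finite over its graded center (by total degree). The crucial structural features I would exploit are that a cocommutative Hopf algebra has the integral property (Proposition~\ref{prop:integral}(iv)), so the equivalences of Lemma~\ref{lem:hfg} are available, and that cocommutativity makes the category of $A$-modules symmetric monoidal, which is exactly what powers the Hilton--Eckmann (Benson) argument for the center.

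First I would replace $Z$ by its honestly commutative even part. Let $Z^{{\rm ev}}$ be the subalgebra of $Z$ in even internal degrees. Since $Z$ is graded central in $R$, every element of $Z^{{\rm ev}}$ commutes (with no sign) with all of $R$, so $Z^{{\rm ev}}$ is a commutative, $A$-stable subalgebra, and $R$ is module-finite over $Z^{{\rm ev}}$ because $R$ is finite over $Z$ and $Z$ is finite over $Z^{{\rm ev}}$. The Artin--Tate argument behind Proposition~\ref{prop:1}(3), applied to $k\subseteq Z^{{\rm ev}}\subseteq Z\subseteq R$, shows that $Z$ and $Z^{{\rm ev}}$ are affine and noetherian. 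Thus $Z^{{\rm ev}}$ is an affine commutative $A$-module algebra and $R$ is a finitely generated $Z^{{\rm ev}}\#A$-module.

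For the finiteness claim, I would invoke Lemma~\ref{lem:hfg}: since $A$ has the integral property, condition \textbf{(hfg)} is equivalent to \textbf{(hfg*)} and to the finite generation of $\coh^*(A,-)$ on affine commutative $A$-module algebras. Applying \textbf{(hfg*)} with the affine commutative algebra $Z^{{\rm ev}}$ and the finitely generated $Z^{{\rm ev}}\#A$-module $R$ gives that $\coh^*(A,R)$ is noetherian over $\coh^*(A,Z^{{\rm ev}})$, while Lemma~\ref{lem:hfg}(iii) shows $\coh^*(A,Z^{{\rm ev}})$ is finitely generated. As $Z^{{\rm ev}}$ is commutative and $A$ is cocommutative, $\coh^*(A,Z^{{\rm ev}})$ is graded commutative, hence noetherian by Proposition~\ref{prop:1}. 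Being module-finite over a finitely generated noetherian (commutative) algebra, $\coh^*(A,R)$ is then itself a finitely generated noetherian algebra.

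The heart of the matter, and the step I expect to be the main obstacle, is the graded-center claim, following \cite[Corollary 3.2.2]{Benson1}. I would show that the image of the algebra map $\coh^*(A,Z^{{\rm ev}})\to\coh^*(A,R)$ induced by the inclusion lies in the graded center of $\coh^*(A,R)$ with respect to total degree. Cocommutativity supplies a symmetric braiding on $A$-modules, and $Z^{{\rm ev}}$ is a commutative algebra object that is central in the algebra object $R$; running the Hilton--Eckmann interchange argument in this suspended (i.e.\ $\Ext$-graded) symmetric monoidal category forces any class pulled back from $\coh^*(A,Z^{{\rm ev}})$ to graded-commute with every class of $\coh^*(A,R)$. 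The delicate point is the sign bookkeeping: one must combine the Koszul sign produced by the braiding on the cohomological degrees with the internal $R$-grading so that the resulting commutation law carries exactly the total-degree sign $(-1)^{(i+j)(i'+j')}$, and it is the vanishing of the internal sign on the even-degree central subalgebra $Z^{{\rm ev}}$ that makes this reconcile cleanly. Granting this key step, $\coh^*(A,R)$ is module-finite over the central subalgebra $\coh^*(A,Z^{{\rm ev}})$, hence a fortiori module-finite over its graded center, which completes the proof.
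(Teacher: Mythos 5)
Your proposal is correct and follows essentially the same route as the paper: both rest on the same three pillars, namely Proposition~\ref{prop:integral}(iv) (cocommutativity gives the integral property), Lemma~\ref{lem:hfg}/\textbf{(hfg*)} applied to an affine commutative central subalgebra to get module-finiteness of $\coh^*(A,R)$, and Benson's twist-map (Hilton--Eckmann) argument for the centrality of the resulting subalgebra. The only deviation is that you stay with $Z^{\rm ev}$ throughout, whereas the paper descends one step further to the invariant subring $W=(Z^{\rm ev})^A$, on which $A$ acts trivially, so that $\coh^*(A,W)\cong W\otimes \coh^*(A,k)$ is visibly finitely generated and noetherian, and proves centrality only for the image of $\coh^*(A,W)$. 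Your stronger claim --- that the image of $\coh^*(A,Z^{\rm ev})$ itself is graded central --- is legitimate here precisely because cocommutativity makes the flip $\tau$ an $A$-module map between \emph{arbitrary} modules, so the commutative-diagram argument the paper writes for $W$ goes through verbatim with $Z^{\rm ev}$ in place of $W$; note that the paper's general machinery (Corollary~\ref{cor:algebragc}, used in Proposition~\ref{prop:hfg}(ii)) would not suffice for this, since it requires the central subalgebra to have components that are direct sums of copies of the unit object, which is exactly why the non-cocommutative results pass to invariants. In effect you outsource the invariants step to the proof of Lemma~\ref{lem:hfg}(iii) rather than performing it explicitly, which is a harmless reorganization.
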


\begin{proof}
By Proposition \ref{prop:1}(2) and (3), $Z$ and $Z^{\rm ev}$ are finitely generated and noetherian. By Proposition~\ref{prop:integral}(4), $Z^{\rm ev}$ is a finitely generated module over its invariant subring $(Z^{\rm ev})^A$ and hence so is $R$. Write $W=(Z^{\rm ev})^A=\bigoplus_{i\ge 0} W_i$. Note that $\coh^*(A,W)\cong \coh^*(A,k)\otimes W$ is finitely generated noetherian. By Lemma \ref{lem:hfg}, $\coh^*(A,R)$ is a finitely generated module over $\coh^*(A,W)$. It remains to show that $\coh^*(A,W)$ has image in the graded center of $\coh^*(A,R)$, which follows from the commutative diagram:  
\[
\begin{xy}*!C\xybox{
\xymatrixcolsep{2pc}
\xymatrix{
 & P_m \ot P_n \ar[rrr]^{(-1)^{\alpha n} \, f_{m,\alpha} \ot g_{n,\beta}} \ar[dd]^{(-1)^{mn} \, \tau} &&&W_{\alpha} \ot R_{\beta} \ar[rrr]^{m_R} \ar[dd]^{(-1)^{mn+n\alpha+\beta m} \, \tau} &&& R_{\alpha+\beta} \\
 P_{\bu} \ar[ur]^{\Delta} \ar[dr]_{\Delta'} &&&&&&& \\
 & P_n \ot P_m \ar[rrr]^{(-1)^{\beta m} \, g_{n,\beta} \ot f_{m,\alpha}} &&&  R_{\beta} \ot W_{\alpha} \ar[rrr]^{(-1)^{(m+\alpha)(n+\beta)}} &&& R_{\beta} \ot W_{\alpha} \ar[uu]_{m_R},
}}
\end{xy}
\]
where $P_{\bu}$ is a projective resolution of $k$ over $A$, $\Delta: P_{\bu} \rightarrow P_{\bu} \ot P_{\bu}$ is a diagonal map, $\Delta '$ is another diagonal map (defined by commutativity of the left triangle), 
$f_{m,\alpha} \in \coh^m(A, W_\alpha)$, $g_{n,\beta} \in \coh^n(A,W_\beta)$, $\tau$ is the twisting map (which is always a morphism of $A$-modules since $A$ is cocommutative), and $m_R$ is the multiplication in $R$. The signs $(-1)^{\alpha n}$ and $(-1)^{\beta m}$ on the rows of the first square come from a standard sign convention. The second square demonstrates the fact that $W$ lies in the graded center of $R$. It follows from the commutativity of the diagram that 
$$f_{m,\alpha} \smile g_{n,\beta} = (-1)^{(m+\alpha)(n+\beta)} \, g_{n,\beta} \smile f_{m,\alpha}.$$
\end{proof}

In general, in order to obtain needed results when $A$ is an arbitrary finite dimensional Hopf algebra, we follow the Hilton-Eckmann argument in the context of a suspended monoidal category demonstrated in \cite{Suarez-Alvarez}. A good reference for all the terminologies is \cite{EGNO}.

\begin{definition}
A {\bf suspended monoidal category} is a 9-tuple $(\mathcal C,\otimes,e,a,\ell,r,T,\lambda,\rho)$ such that $(\mathcal C,\otimes,e,a,r,\ell)$ is a monoidal category, $T:\mathcal C\to \mathcal C$ is an automorphism, $\lambda_{X,Y}: X\otimes TY\to T(X\otimes Y)$ and $\rho:TX\otimes Y\to T(X\otimes Y)$ are isomorphisms of functors $\mathcal C\times \mathcal C\to \mathcal C$ for each pair of objects $X$ and $Y\in {\rm obj}\,\mathcal C$, and the following diagrams commute:
\[
\xymatrix{
e\otimes TX\ar[r]^-{\ell}\ar[d]_-{\lambda} & TX\ar[d]^-{1}\\
T(e\otimes X)\ar[r]^-{T\ell} & TX
}
\qquad \qquad \qquad 
\xymatrix{
TX\otimes e\ar[r]^-{r}\ar[d]_-{\rho} & TX\ar[d]^-{1}\\
T(X\otimes e)\ar[r]^-{Tr} & TX,
}
\]
while the following diagram anti-commutes: 
\[
\xymatrix{
TX\otimes TY\ar[r]^-{\rho}\ar[d]_-{\lambda}\ar@{}[dr]|{(-1)}&T(X\otimes TY)\ar[d]^-{T\lambda}\\
T(TX\otimes Y)\ar[r]^-{T\rho}& T^2(X\otimes Y).
}
\]
\end{definition}
Given a suspended monoidal category, as in \cite[\S 1.5 and \S1.6]{Suarez-Alvarez}, we can inductively define a series of isomorphisms of functors $\mathcal C\times \mathcal C\to \mathcal C$ by  
$$\lambda_q: X\otimes T^qY\to T^q(X\otimes Y) , \quad \rho_p: T^pX\otimes Y\to T^p(X\otimes Y),$$
for all $p,q\in \mathbb Z$ with $\lambda_1=\lambda, \rho_1=\rho$, such that the following diagrams commute:  
\[
\xymatrix{
e\otimes T^qX\ar[r]^-{\ell}\ar[d]_-{\lambda_q} & T^qX\ar[d]^-{1}\\
T^q(e\otimes X)\ar[r]^-{T^q \ell} & T^qX
}
\qquad \qquad \qquad 
\xymatrix{
T^pX\otimes e\ar[r]^-{r}\ar[d]_-{\rho_p} & T^pX\ar[d]^-{1}\\
T^p(X\otimes e)\ar[r]^-{T^pr} & T^pX,
}
\]
while the following diagram $(-1)^{pq}$-commutes: 
\[
\xymatrix{
T^pX\otimes T^qY\ar[r]^-{\rho_p}\ar[d]_-{\lambda_q}\ar@{}[dr]|{(-1)^{pq}}&T^p(X\otimes T^qY)\ar[d]^-{T^p\lambda_q}\\
T^q(T^pX\otimes Y)\ar[r]^-{T^q\rho_p}& T^{p+q}(X\otimes Y).
}
\]

Next, we have isomorphisms $\sigma: X\otimes e\cong e\otimes X$ and $\tau: e\otimes X\cong X\otimes e$ satisfying $\ell\sigma=r$ and $r\tau=\ell$ for any $X\in {\rm obj}\,\mathcal C$. Moreover, all the isomorphisms above can be naturally extended from $e$ to $Y$ whenever $Y=\bigoplus e$ is a direct sum of copies of the identity object $e$.

Now, let $R=\bigoplus_{i \in \mathbb Z} R_i$ be a graded ring in $\mathcal C $ with product maps $m_{ij}: R_i\otimes R_j\to R_{i+j}$ satisfying the associativity axiom: 
\[
\xymatrix{
(R_i \otimes R_j)\otimes R_k \ar[r]^-{m_{ij}\otimes 1}\ar[d]_-{a_{ijk}} & R_{i+j}\otimes R_k\ar[dd]^-{m_{(i+j)k}}\\
R_i\otimes (R_j\otimes R_k)\ar[d]_-{1\otimes m_{jk}} &\\
R_i\otimes R_{j+k}\ar[r]^-{m_{i(j+k)}} \ar[r] & R_{i+j+k}.
}
\]
Moreover, we say $R$ is {\bf unital} if there is a morphism $u: e\to R_0$ satisfying the unit axiom
\[
\xymatrix{
e\otimes R_i\ar[rr]^-{u\otimes 1}\ar[rd]_-{\ell} &&R_0\otimes R_i\ar[dl]^-{m_{0i}}\\
&R_i&
}\quad\quad\quad
\xymatrix{
R_i\otimes e\ar[rr]^-{1\otimes u}\ar[rd]_-{r} &&R_i\otimes R_0\ar[dl]^-{m_{i0}}\\
&R_i&
}.
\]
Consider the ring
$$
   E(R):=\bigoplus_{i,j \in \mathbb Z}\, \Hom_\mathcal C(e,T^iR_j), 
$$
where the product in $E(R)$ is given by the following composition for any $f: e\to T^pR_\alpha$ and $g: e\to T^qR_\beta$:  
\begin{equation}\label{eq:prod}
\small
\xymatrix{
f\cdot g:e  && e\otimes e\ar[ll]_-{\ell=r}\ar[rr]^-{(-1)^{q\alpha}f\otimes g} && T^pR_\alpha \otimes T^qR_\beta \ar[rr]^-{\rho_p}&& T^p(R_\alpha\otimes T^qR_\beta)\\
\ar[rr]^-{T^p\lambda_q}&&T^{p+q}(R_\alpha\otimes R_\beta)\ar[rr]^-{T^{p+q}m_{\alpha\beta}}&&T^{p+q}R_{\alpha+\beta}.
}
\end{equation}
The sign $(-1)^{q\alpha}$ above comes from the sign convention when passing $g$ over $R_{\alpha}$. Notice that if $R$ is unital, then $E(e)$ has image as a graded subalgebra of $E(R)$. Moreover, we say that $M=\bigoplus_{i\in \mathbb Z} M_i$ is a graded (bi)module over $R$ if there are morphisms $\ell_{ij}: R_i\otimes M_j\to M_{i+j}$ (resp. $r_{ji}: M_j\otimes R_i\to M_{i+j}$) satisfying some obvious compatibility conditions. Suppose $M=\bigoplus_{i\in \mathbb Z} M_i$ is a graded bimodule over $R=\bigoplus_{i \in \mathbb Z} R_i$ with each $R_i$ being a direct sum of copies of $e$. We call $M$ a \textbf{graded symmetric bimodule} over $R$ if the following diagrams commute:
\[
\xymatrix{
R_i\otimes M_j\ar[rr]^-{\tau}\ar[dr]_-{r_{ij}} &\ar@{}[d]|{(-1)^{ij}}& M_j\otimes R_i\ar[dl]^-{\ell_{ji}}\\
& M_{i+j}&
}\qquad\quad\qquad
\xymatrix{
M_j\otimes R_i\ar[rr]^-{\sigma}\ar[dr]_-{r_{ji}} &\ar@{}[d]|{(-1)^{ij}}& R_i\otimes M_j\ar[dl]^-{\ell_{ij}}\\
& M_{i+j}&
}
\]
Similarly we denote 
$$
   E(M):=\bigoplus_{i,j \in \mathbb Z}\, \Hom_\mathcal C(e,T^iM_j).
$$

The following theorem extends Su\'{a}rez-\'{A}lvarez's results \cite{Suarez-Alvarez} on the graded commutativity of the (Hochschild) cohomology ring of a finite dimensional (Hopf) algebra.

\begin{theorem}
\label{thm:gc}
Retain the above notations. Let $M=\bigoplus_{i\in \mathbb Z} M_i$ be a graded symmetric bimodule over $R=\bigoplus_{i \in \mathbb Z} R_i$ with each $R_i$ being a direct sum of copies of $e$. Then $E(M)$ is a graded symmetric bimodule over $E(R)$. In particular, $E(M)$ is a graded symmetric bimodule over $E(e)$. 
\end{theorem}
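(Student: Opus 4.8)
The plan is to reduce the claim about $E(M)$ being a graded symmetric bimodule over $E(R)$ to a diagram-chase verification of the two defining symmetry relations, exactly as in the commutativity diagram already appearing in the proof of Lemma~\ref{lem:cocom}. First I would fix the module structure: for homogeneous $f\colon e\to T^pR_i$ in $E(R)$ and $\mu\colon e\to T^qM_j$ in $E(M)$, I define the left action $f\cdot \mu\colon e\to T^{p+q}M_{i+j}$ by the composite analogous to \eqref{eq:prod}, replacing the multiplication $m_{\alpha\beta}$ with the left module map $\ell_{ij}\colon R_i\otimes M_j\to M_{i+j}$ and inserting the same sign $(-1)^{q i}$ coming from passing $\mu$ over $R_i$; symmetrically the right action uses $r_{ji}$ and the sign $(-1)^{q j}$ (or whatever the sign convention dictates). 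One checks these are indeed associative actions of $E(R)$ on $E(M)$ by pasting the associativity pentagon for $R$ acting on $M$ with the coherence diagrams for $\lambda_q,\rho_p$; this is the routine part and I would only note that it is formally identical to the verification that $E(R)$ is a ring.

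The substantive step is the graded-symmetry relation. I would show that for $f\colon e\to T^pR_i$ and $\mu\colon e\to T^qM_j$,
\[
f\cdot \mu = (-1)^{(p+i)(q+j)}\,\mu\cdot f,
\]
where $\mu\cdot f$ is the right action. The strategy is to build the single large commutative diagram whose top-and-right path computes $f\cdot\mu$ and whose left-and-bottom path computes $\mu\cdot f$. The diagram has three ingredients stacked together: the diagonal/twist datum coming from the suspended monoidal structure (the $(-1)^{pq}$-commuting square for $\rho_p$ and $\lambda_q$), the bookkeeping signs $(-1)^{qi}$ and $(-1)^{pj}$ from the product convention in \eqref{eq:prod}, and the hypothesis that $M$ is a \emph{graded symmetric} bimodule, which supplies the defining $(-1)^{ij}$-commuting triangles relating $\ell_{ij}\circ\tau$ and $r_{ij}$. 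When these three sign contributions are composed, the total sign collects to $(-1)^{(p+i)(q+j)}$, giving the claimed relation.

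The main obstacle I expect is purely the sign accounting: one must verify that the three separate sign sources — the suspension sign $(-1)^{pq}$, the passing-over signs from the product convention, and the module-symmetry signs $(-1)^{ij}$ — combine to exactly $(-1)^{(p+i)(q+j)} = (-1)^{pq+pj+iq+ij}$ and not some off-by-a-factor variant. I would handle this by tracking each morphism in the combined diagram with its exponent and confirming the four cross-terms $pq,\,pj,\,iq,\,ij$ each appear exactly once. The fact that $R_i$ and $M_j$ are direct sums of copies of the unit object $e$ is what licenses the use of $\sigma$, $\tau$ and the extended isomorphisms $\lambda_q,\rho_p$ on $M$, so I would invoke the remark that all these structural isomorphisms extend from $e$ to any such direct sum. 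The specialization to $E(e)$ is then immediate, since $E(e)$ sits inside $E(R)$ as a graded subalgebra by the unitality remark, and restricting the $E(R)$-action gives the $E(e)$-action.
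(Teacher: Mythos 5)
Your plan is essentially the paper's own proof: the paper establishes $f\cdot g=(-1)^{(p+\alpha)(q+\beta)}\,g\cdot f$ by pasting together exactly the three sign sources you identify --- the $(-1)^{pq}$-anticommuting square for $\rho_p$ and $\lambda_q$, the bookkeeping signs $(-1)^{q\alpha}$ and $(-1)^{p\beta}$ from the product convention \eqref{eq:prod}, and the $(-1)^{\alpha\beta}$-commuting triangles expressing graded symmetry of $M$ --- organized as three commutative diagrams (two computing $f\cdot g$ and $g\cdot f$, one comparing them via functoriality of $\sigma$ and the $\rho_p$/$\lambda_p$ coherence square) rather than your single large diagram, with the identical total sign count $pq+p\beta+q\alpha+\alpha\beta$. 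One small correction to your closing remark: the theorem assumes only that each $R_i$ (not each $M_j$) is a direct sum of copies of $e$, which suffices because $\sigma$ and $\tau$ are only ever applied with an $R_i$ tensor factor.
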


\begin{proof}
Take $f: e\to T^pM_\alpha$ and $g:e\to T^qR_\beta$. It is straightforward to check that we have the following $(-1)^{p\beta}$-commutative diagram:
\[\small
\xymatrix{
e\ar[d]_-{g}&  e\otimes e\ar[l]_-{r}\ar[r]^-{(-1)^{p\beta}g\otimes f}\ar[d]_-{g\otimes 1}\ar@{}[dr]|{(-1)^{p\beta}} &   T^qR_\beta \otimes T^pM_\alpha\ar[d]^-{1\otimes 1} \\
T^qR_\beta\ar[d]_-{1}  &T^qR_\beta \otimes e\ar[l]_-{r}\ar[r]^-{1\otimes f}\ar[d]_-{\rho_q}&T^qR_\beta \otimes T^pM_\alpha\ar[d]^-{\rho_q}\\
T^qR_\beta   &T^q(R_\beta \otimes e)\ar[l]_-{T^qr}\ar[r]^-{T^q(1\otimes f)}  &T^q(R_\beta\otimes T^pM_\alpha)\ar[r]^-{T^q\lambda_p}& T^{p+q}(R_\beta\otimes M_\alpha)\ar[r]^-{T^{p+q} \ell_{\beta\alpha}} & T^{p+q}M_{\alpha+\beta}
}
\]
where the right outer boundary represents $g\cdot f$. On the other hand, the right outer boundary of the $(-1)^{q\alpha+pq}$-commutative diagram below is equal to $f\cdot g$:
\[\small
\xymatrix{
e\ar[d]_-{g}&  e\otimes e\ar[l]_-{\ell=r}\ar[r]^-{(-1)^{q\alpha}f\otimes g}\ar[d]_-{1\otimes g}\ar@{}[dr]|{(-1)^{q\alpha}} &   T^pM_\alpha \otimes T^qR_\beta\ar[d]^-{1\otimes 1} \\
T^qR_\beta\ar[d]_-{1}  &e \otimes T^qR_\beta \ar[l]_-{\ell}\ar[r]^-{f\otimes 1}\ar[d]_-{\lambda_q}&T^pM_\alpha\otimes T^qR_\beta \ar@{}[dr]|{(-1)^{pq}}\ar[d]^-{\lambda_q}\ar[r]^-{\rho_p}& T^p(M_\alpha\otimes T^qR_\beta)\ar[d]^-{T^p\lambda_q}\\
T^qR_\beta   &T^q( e\otimes R_\beta)\ar[l]_-{T^q \ell}\ar[r]^-{T^q(f\otimes 1)}  &T^q(T^pM_\alpha\otimes R_\beta)\ar[r]^-{T^q\rho_p}& T^{p+q}(M_\alpha\otimes R_\beta)\ar[r]^-{T^{p+q}r_{\alpha\beta}} & T^{p+q}M_{\alpha+\beta}
}
\]
Therefore, the anti-commutativity of the product of $f$ and $g$ can be derived from the next $(-1)^{\alpha\beta}$-commutative diagram: 
\[\small
\xymatrix{
e\ar[d]_-{1}  &T^qR_\beta\ar[l]_{g}\ar[d]_-{1} &   T^q(e\otimes R_\beta)\ar[l]_{T^q \ell}\ar[r]^-{T^q(f\otimes 1)}\ar[d]_-{T^q\sigma} &    T^q(T^pM_\alpha\otimes R_\beta)\ar[r]^-{T^q\rho_p}\ar[d]_-{T^q\sigma}&  T^{p+q}(M_\alpha\otimes R_\beta)\ar[r]^-{T^{p+q}r_{\alpha\beta}} \ar[d]_-{T^{p+q}\sigma}\ar@{}[dr]|{(-1)^{\alpha\beta}}&T^{p+q}M_{\alpha+\beta}\ar[d]_-{1}\\ 
e & T^qR_\beta\ar[l]_{g}   & T^q(R_\beta \otimes e)\ar[l]_{T^qr}\ar[r]^-{T^q(1\otimes f)}   &   T^q(R_\beta \otimes T^pM_\alpha)\ar[r]^-{T^q\lambda_p} &T^{p+q}(R_\beta \otimes M_\alpha)\ar[r]^-{T^{p+q} \ell_{\beta\alpha}} & T^{p+q}M_{\alpha+\beta}, \\ 
}
\]
where the top row is equal to $(-1)^{q\alpha+pq}f\cdot g$ and the bottom row equals $(-1)^{p\beta}g\cdot f$. Hence $f\cdot g=(-1)^{(p+\alpha)(q+\beta)}g\cdot f$. 
Note that the third square commutes because of the functoriality of $\sigma$ and the fourth square commutes by commutativity of  
\[\small
\xymatrix{
T^pX\otimes e\ar@/_2.0pc/[dd]_-{\sigma}\ar[d]^-{r} \ar[r]^-{\rho_p}&   T^p(X\otimes e)\ar@/^2.0pc/[dd]^-{T^p\sigma}\ar[d]_-{T^pr}\\
T^pX\ar[r]^-{1} & T^pX\\
e\otimes T^pX\ar[r]^-{\lambda_p}\ar[u]_-{\ell}& T^p(e\otimes X)\ar[u]^-{T^p \ell}
}
\]
for any object $X\in {\rm obj}\, \mathcal C$. 
\end{proof}

\begin{corollary}
\label{cor:algebragc}
Let $(\mathcal C,\otimes,e,a,\ell,r,T,\lambda,\rho)$ be a suspended monoidal category. Let $W=\bigoplus_{i\in \mathbb Z} W_i$ be a subring of $R=\bigoplus_{i\in \mathbb Z} R_i$ with each $W_i$ being a direct sum of copies of $e$ and the following $(-1)^{ij}$-commutative diagrams 
\[
\xymatrix{
W_i\otimes R_j\ar[rr]^-{\tau}\ar[dr]_-{\mu} &\ar@{}[d]|{(-1)^{ij}}& R_j\otimes W_i\ar[dl]^-{\mu}\\
& R_{i+j}&
}\quad\quad\quad
\xymatrix{
R_i\otimes W_j\ar[rr]^-{\sigma}\ar[dr]_-{\mu} &\ar@{}[d]|{(-1)^{ij}}& W_j\otimes R_i\ar[dl]^-{\mu}\\
& R_{i+j}&
}
\]
Then the subring $E(W)$ has image in the graded center of $E(R)$.
\end{corollary}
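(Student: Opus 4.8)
The plan is to obtain this as an immediate application of Theorem~\ref{thm:gc}, applied with the graded ring taken to be $W$ itself and the graded symmetric bimodule taken to be $R$. First I would observe that $W=\bigoplus_{i\in\mathbb Z}W_i$ is a graded ring in $\mathcal C$ each of whose homogeneous components is a direct sum of copies of $e$, so it meets the standing hypothesis imposed on the ring in Theorem~\ref{thm:gc}. I would then give $R$ the structure of a graded bimodule over $W$ in which the left action $W_i\otimes R_j\to R_{i+j}$ and the right action $R_j\otimes W_i\to R_{i+j}$ are both furnished by restricting the multiplication $\mu$ of $R$.

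The key point is that the two $(-1)^{ij}$-commutative diagrams in the hypothesis are, verbatim, the defining diagrams for $R$ to be a \emph{graded symmetric bimodule} over $W$ in the sense introduced before Theorem~\ref{thm:gc}: the first diagram is exactly the $\tau$-compatibility $\mu\circ\tau=(-1)^{ij}\mu$ on $W_i\otimes R_j$, and the second is the $\sigma$-compatibility on $R_i\otimes W_j$. Thus all hypotheses of Theorem~\ref{thm:gc} are satisfied with $W$ as the ring and $R$ as the graded symmetric bimodule, and the theorem yields that $E(R)$ is a graded symmetric bimodule over $E(W)$.

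It then remains to translate this conclusion into graded centrality. Here I would use that the inclusion $W\hookrightarrow R$ of subrings induces a homomorphism $E(W)\to E(R)$ realizing $E(W)$ as a graded subalgebra of $E(R)$: since the product in \eqref{eq:prod} is assembled only from the structure isomorphisms $\rho,\lambda$ and the multiplication map, and the multiplication of $W$ is the restriction of that of $R$, the induced map respects products. Under this identification the left and right actions of $E(W)$ on $E(R)$ produced by Theorem~\ref{thm:gc} are precisely left and right multiplication in $E(R)$. Consequently, the graded symmetric bimodule relation $w\cdot x=(-1)^{(p+\alpha)(q+\beta)}\,x\cdot w$ for $w\colon e\to T^pW_\alpha$ and $x\colon e\to T^qR_\beta$ says exactly that every element of $E(W)$ is graded central in $E(R)$, which is the assertion.

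The step I expect to require the most care is this final identification: confirming that the bimodule-action maps obtained by applying $E(-)$ to $R$ viewed over $W$ coincide on the nose with left and right multiplication in $E(R)$, including agreement of the sign conventions in \eqref{eq:prod}. This is a compatibility check rather than a genuine obstacle, since both the action formula and the product formula are built from the same data; but one must verify that restricting one factor to lie in $E(W)$ introduces no spurious sign, so that the symmetric relation furnished by the theorem is literally a statement about the product of $E(R)$.
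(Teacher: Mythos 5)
Your proposal is correct and matches the paper's (implicit) argument: the corollary is stated without proof as an immediate consequence of Theorem~\ref{thm:gc}, applied exactly as you do, with $W$ as the graded ring (its components being direct sums of copies of $e$) and $R$ as a graded symmetric $W$-bimodule whose two actions are restrictions of the multiplication of $R$, the hypothesis diagrams being verbatim the symmetric-bimodule axioms. Your final compatibility check---that the $E(W)$-actions on $E(R)$ furnished by Theorem~\ref{thm:gc} agree, signs included, with multiplication in $E(R)$ under the subring inclusion---is precisely the routine verification the paper suppresses, since both the action and the product are assembled from the same formula~\eqref{eq:prod}.
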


\begin{corollary}\cite[Theorem 1.7]{Suarez-Alvarez}
\label{cor:gc}
Let $(\mathcal C,\otimes,e,a,\ell,r,T,\lambda,\rho)$ be a suspended monoidal category. Set 
$$
   E(e):=\bigoplus_{i \in \mathbb Z}\,\Hom_\mathcal C(e,T^ie).
$$
If $f: e\to T^pe$ and $g: e\to T^qe$, define $f\cdot g=T^qf\circ g: e\to T^{p+q}e$. Then $E(e)$ is a commutative graded ring. 
\end{corollary}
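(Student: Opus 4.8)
The plan is to obtain Corollary~\ref{cor:gc} as the special case $M=R=e$ of Theorem~\ref{thm:gc}, which has just been proved; indeed $E(e)$ is exactly the object $E(R)$ attached to the unit object, and the graded commutativity sought is the self-symmetry supplied by that theorem. First I would regard $e$ as a graded ring concentrated in degree $0$, with $R_0=e$, $R_i=0$ for $i\neq 0$, multiplication $m_{00}=\ell_e=r_e\colon e\otimes e\to e$, and unit $u=\id$; the associativity and unit axioms then follow from the pentagon and triangle coherences of $(\mathcal C,\otimes,e,a,\ell,r)$, using the standard fact that the two unitors agree on the unit object, $\ell_e=r_e$. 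Viewing $e$ simultaneously as a graded bimodule over itself with both actions equal to $\ell_e=r_e$, the graded-symmetric conditions preceding Theorem~\ref{thm:gc} reduce, in the single nonzero bidegree $(0,0)$, to the relations $r_e\circ\tau=\ell_e$ and $\ell_e\circ\sigma=r_e$, which are precisely the defining relations of $\tau$ and $\sigma$. Hence $e$ is a graded symmetric bimodule over itself and Theorem~\ref{thm:gc} applies.

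The main technical step is to match the two product conventions. I would unwind the abstract product~\eqref{eq:prod} for $R=e$, $\alpha=\beta=0$, namely
\[
f\cdot g \;=\; T^{p+q}\ell_e\circ T^p\lambda_q\circ\rho_p\circ(f\otimes g)\circ\ell_e^{-1},
\]
and show that it collapses to the bare composite $T^p g\circ f$. Concretely I would factor $f\otimes g=(\id\otimes g)\circ(f\otimes\id)$, push $f$ and $g$ across $\rho_p$ and $\lambda_q$ by naturality of the unitors together with the two unit-coherence squares for $\rho_p$ and $\lambda_q$ (which give $T^p r_e\circ\rho_p^{e,e}=r_{T^pe}$ and $T^q\ell_e\circ\lambda_q^{e,e}=\ell_{T^qe}$), and finally cancel the remaining unitors using $\ell_e=r_e$. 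This identifies the corollary's product $f\cdot g=T^qf\circ g$ with the opposite of~\eqref{eq:prod}.

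With the products matched, graded commutativity is formal: Theorem~\ref{thm:gc} makes $E(e)$ a graded symmetric bimodule over itself, so the product~\eqref{eq:prod} obeys a relation of the form $a\cdot b=(-1)^{pq}\,b\cdot a$ for $a,b$ of degrees $p,q$, the internal-degree sign $(-1)^{(p+\alpha)(q+\beta)}$ collapsing to $(-1)^{pq}$ since $\alpha=\beta=0$. Passing to the opposite product preserves this relation, whence $f\cdot g=(-1)^{pq}\,g\cdot f$ for the composition product of the corollary. Associativity of $T^qf\circ g$ is a one-line consequence of the functoriality of $T$, so $E(e)$ is a graded-commutative ring, as claimed.

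I expect the coherence bookkeeping in the second paragraph to be the only genuine obstacle: collapsing the five-term composite~\eqref{eq:prod} to $T^pg\circ f$ is a diagram chase through the unit-coherence axioms with repeated appeals to $\ell_e=r_e$, while everything else is purely formal. As an alternative that sidesteps reconciling the two conventions, one could apply the $(-1)^{pq}$-commuting square relating $T^p\lambda_q\circ\rho_p$ and $T^q\rho_p\circ\lambda_q$ directly to $f\otimes g$ and observe that its two routes evaluate to $T^pg\circ f$ and $(-1)^{pq}\,T^qf\circ g$; this yields the commutativity relation without ever invoking~\eqref{eq:prod}, at the cost of running the same coherence computation on both sides.
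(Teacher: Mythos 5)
Your proposal is correct and takes essentially the same route as the paper, which likewise proves the corollary by setting $R=M=e$ in Theorem~\ref{thm:gc} and reconciling the composition product $f\cdot g=T^qf\circ g$ with formula~\eqref{eq:prod}, except that the paper outsources the reconciliation to the proof of Su\'arez-\'Alvarez's Theorem 1.7 while you carry out the coherence chase explicitly. Your computation that \eqref{eq:prod} collapses to $T^pg\circ f$ (so that the corollary's product is its opposite, which is harmless since graded commutativity transfers across opposites, and indeed the two differ only by the sign $(-1)^{pq}$) is accurate and is in fact a more careful rendering of the paper's looser assertion that the two products ``coincide.''
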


\begin{proof}
Let $R=M=e$ in Theorem~\ref{thm:gc}. Note that the product $f\cdot g=T^qf\circ g: e\to T^{p+q}e$ defined in the statement coincides with the formula \eqref{eq:prod}; see the proof of \cite[Theorem 1.7]{Suarez-Alvarez}. \\
\end{proof}

\noindent
\underline{{\bf Applications.}} For finite dimensional (Hopf) algebras, spectral sequences are powerful tools for handling their cohomology rings. Therefore, we are interested in multiplicative spectral sequences satisfying similar finite generation conditions. The spectral sequences we have in mind are May spectral sequences related to filtered algebras and Lyndon-Hochschild-Serre spectral sequences related to smash and crossed products. They are described explicitly in the Appendix for completeness. We use these spectral sequences to conclude the finite generation conditions for the original (Hopf) algebras if the initial pages of these spectral sequences satisfy the finite generation conditions and the corresponding spectral sequences collapse at certain pages in positive characteristic.

\begin{prop}
\label{prop:hfg}
Let $A$ be a finite dimensional Hopf algebra over a field $k$, $M$ be any finite dimensional $A$-module, and $R=\bigoplus_{i\ge 0} R_i$ be a connected graded $A$-module algebra. Then
\begin{enumerate}
\item $\coh^*(A,k)$ maps to the graded center of $\Ext_A^*(M,M)$. Moreover, if $A$ satisfies {\rm \textbf{(hfg)}}, then $\Ext_A^*(M,M)$ is noetherian and a finite module over its graded center.
\item $\coh^*(A,k)$ maps to the graded center of $\coh^*(A,R)=\bigoplus_{i,j\ge 0}\coh^i(A,R_j)$ (with respect to the total degree). Moreover, if $A$ satisfies {\rm \textbf{(hfg*)}} and $R$ is a finitely generated noetherian algebra and a finite module over some graded central $A$-module subalgebra, then $\coh^*(A,R)$ is noetherian and a finite module over its graded center. 
\end{enumerate}
\end{prop}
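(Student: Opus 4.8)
The plan is to realize both cohomology rings as instances of the ring $E(R)$ attached to a graded algebra object $R$ in the suspended monoidal category $\mathcal C$ of $A$-modules (concretely, the derived category of $A$-modules, with suspension $T$ the shift and unit object $e=k$), so that $E(e)=\coh^*(A,k)$ and, for a graded $A$-module algebra $R=\bigoplus_i R_i$, the dictionary of this section identifies $E(R)=\bigoplus_{i,j}\Hom_{\mathcal C}(e,T^iR_j)$ with $\coh^*(A,R)$ as graded rings. Under this dictionary the two centrality assertions become exactly the conclusion of Corollary~\ref{cor:algebragc}, while the module-finiteness assertions become statements already supplied by {\rm \textbf{(hfg)}}, respectively {\rm \textbf{(hfg*)}}. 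Thus the proposition reduces, in each case, to choosing the correct central subalgebra $W$ whose graded pieces are direct sums of copies of $e$ (that is, $A$-invariant elements), checking the two $(-1)^{ij}$-commutativity hypotheses of Corollary~\ref{cor:algebragc}, and then invoking the appropriate finite generation hypothesis.

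For part (i), I would take $R=\End_k(M)\cong M^{*}\ot M$, viewed as an $A$-module algebra concentrated in internal degree $0$, together with the scalar subalgebra $W=k\cdot\id_M\cong e$. The tensor-hom identification $\Ext^i_A(k,\End_k(M))\cong\Ext^i_A(M,M)$ gives an isomorphism of graded rings $E(R)\cong\Ext^*_A(M,M)$ carrying composition in $\End_k(M)$ to the Yoneda product, and under it $E(W)=\coh^*(A,k)$. Since $W$ is central in $\End_k(M)$ and its single nonzero degree is a copy of $e$, Corollary~\ref{cor:algebragc} yields the first assertion. If moreover $A$ satisfies {\rm \textbf{(hfg)}}, then $\End_k(M)$ is a finite $A$-module, so $\coh^*(A,\End_k(M))=\Ext^*_A(M,M)$ is noetherian over $\coh^*(A,k)$; as this action factors through the graded center and $\coh^*(A,k)$ is itself noetherian, $\Ext^*_A(M,M)$ is noetherian and a finite module over its graded center.

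For part (ii), connectedness gives $R_0=k=e$, so taking $W=R_0$ (central, and a copy of $e$) in Corollary~\ref{cor:algebragc} immediately shows $\coh^*(A,k)=E(R_0)$ maps to the graded center of $\coh^*(A,R)$. For the ``moreover'', suppose $R$ is finite over a graded central $A$-module subalgebra $Z$. By Proposition~\ref{prop:1} the subalgebras $Z$ and $Z^{\rm ev}$ are affine and noetherian, and $R$ is finite over $Z^{\rm ev}$. I then pass to the $A$-invariant subalgebra $W:=(Z^{\rm ev})^A$, on which $A$ acts trivially, so that $\coh^*(A,W)\cong\coh^*(A,k)\ot W$ is affine and noetherian with all degrees copies of $e$; since $W$ is central in $R$, Corollary~\ref{cor:algebragc} places $\coh^*(A,W)$ in the graded center of $\coh^*(A,R)$. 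Feeding this into {\rm \textbf{(hfg*)}} (via Lemma~\ref{lem:hfg}) gives that $\coh^*(A,R)$ is noetherian over $\coh^*(A,W)$, whence $\coh^*(A,R)$ is noetherian and a finite module over its graded center.

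The main obstacle is the last step of (ii): to land in the graded center one is forced onto the invariant subalgebra $W=(Z^{\rm ev})^A$, because Corollary~\ref{cor:algebragc} only produces central classes from copies of $e$, whereas {\rm \textbf{(hfg*)}} most naturally delivers finiteness of $\coh^*(A,R)$ over the cohomology $\coh^*(A,Z^{\rm ev})$ of the \emph{full}, non-invariant $Z^{\rm ev}$. Bridging the two requires that $R$ (equivalently $Z^{\rm ev}$) be module-finite over $W$, i.e.\ that $Z^{\rm ev}$ be finite over its invariants $(Z^{\rm ev})^A$; this is exactly the integral property of $A$, which plays here the role that cocommutativity played in Lemma~\ref{lem:cocom}. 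It does not hold for an arbitrary Hopf algebra, but it is automatic in the positive-characteristic setting in which this proposition is applied (Proposition~\ref{prop:integral}(i), Skryabin), and it is precisely what allows {\rm \textbf{(hfg*)}} to be applied with $W$ in place of $Z^{\rm ev}$. Verifying this finiteness is the crux; once it is in hand, the centrality follows from Corollary~\ref{cor:algebragc} and the noetherianity from Proposition~\ref{prop:1}, both formally.
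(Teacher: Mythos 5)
Your proposal is correct and follows essentially the same route as the paper's own proof: both parts come from Theorem~\ref{thm:gc}/Corollary~\ref{cor:algebragc} applied in the derived category of $A$-modules with $e=k$ and $T=[1]$. For (i), the paper works with $\Ext_A^*(M,M)\cong \Ext_A^*(k,M\otimes M^{*})$ and puts $\coh^*(A,k)$ into the graded center via the coevaluation $k\to M\otimes M^{*}$, which is exactly your $W=k\cdot\id_M\subset \End_k(M)$ in degree $0$; for (ii) it uses $R_0=k$ for the first claim and $W=(Z^{\rm ev})^A$ for the second, with Proposition~\ref{prop:1} supplying affineness and noetherianity of $Z$, $Z^{\rm ev}$, just as you do.

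The one place you diverge from --- and in fact improve on --- the paper's exposition is the step you call the crux. The paper's proof of the ``moreover'' in (ii) simply writes ``$W=(Z^{\rm ev})^A$, where $R$ is finitely generated as a module over $W$'' and then applies {\rm \textbf{(hfg*)}}; no justification is offered for that finiteness, and it is not a formal consequence of {\rm \textbf{(hfg*)}} (applied with the non-invariant algebra $Z^{\rm ev}$ itself, {\rm \textbf{(hfg*)}} gives finiteness of $\coh^*(A,R)$ over $\coh^*(A,Z^{\rm ev})$, whose image is not visibly central, which is the mismatch you describe). You correctly identify that what is needed is finiteness of $Z^{\rm ev}$ over $(Z^{\rm ev})^A$, i.e.\ the integral property of $A$ --- precisely the role cocommutativity plays via Proposition~\ref{prop:integral}(4) in Lemma~\ref{lem:cocom} --- and that while this fails for a general Hopf algebra in characteristic zero (Zhu's Sweedler example), it is automatic in positive characteristic by Skryabin, which covers the only situations where the finite-over-graded-center conclusion is actually consumed (the permanent-cocycle construction of Lemma~\ref{lem:key}; in the characteristic-zero case of Theorem~\ref{thm:FiniteTypeCoh} the sequence collapses at $E_2$ and centrality is not needed). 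So, read literally, the paper's proof contains the very unproved assertion you isolate; your version makes the implicit hypothesis explicit and supplies the missing justification, and is otherwise the paper's argument.
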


\begin{proof}
We use Theorem~\ref{thm:gc} where $\mathcal C$ is the left derived category of ${\rm Mod}(A)$ with $\otimes=\otimes_k$, $e=k$ and $T=[1]$ is the shift functor. 

(i) Let $V^*$ be the left dual of $V$ (see e.g., \cite[\S 2.10]{EGNO}). Note that $\Ext_A^*(V, V)\cong \Ext_A^*(k, V\otimes V^*)$ and the standard actions of $\coh^*(A, k)$ on either one correspond under this isomorphism. By Theorem~\ref{thm:gc}, $\coh^*(A,k)$ maps to the graded center of $\Ext_A^*(k, V\otimes V^*)$ via the coevaluation map $\text{coev}: k\to V\otimes V^*$. Moreover, if $A$ satisfies {\rm \textbf{(hfg)}}, then $\Ext_A^*(k, V\otimes V^*)$ is finitely generated as a module over the noetherian graded central subalgebra given by the image of $\coh^*(A,k)$, and as a consequence it is noetherian. 

(ii) Since $k=R_0$ certainly lies in the graded center of $R$, $\coh^*(A,k)$ maps to the graded center of $\coh^*(A,R)$. Now assume $H$ satisfies {\rm \textbf{(hfg*)}} and denote again by $Z$ a graded central $A$-module subalgebra of $R$. Write $W=(Z^{\rm ev})^A=\bigoplus_{i\ge 0} W_i$, where $R$ is finitely generated as a module over $W$. Then $\coh^*(A,R)$ is a finitely generated module over the noetherian algebra $\coh^*(A,W)$ by {\rm \textbf{(hfg*)}} and hence itself is finitely generated noetherian.  We use Corollary \ref{cor:algebragc} to conclude that $\coh^*(A,W)$ maps to the graded center of $\coh^*(A,R)$ since each homogenous component $W_i$ of $W$ is a direct sum of copies of the trivial module $k$. 
\end{proof}

Similarly, we obtain the following result, where part (i) is a special case of \cite[Theorem 1.1]{SO}.

\begin{prop}
\label{prop:fg}
Let $A$ be a finite dimensional algebra over a field $k$, $M$ be a finite dimensional $A$-module, and $R$ be a finite dimensional unital algebra in ${\rm Mod}(A^e)$. Then 
\begin{enumerate}
\item  $\HH^*(A)$ maps to the graded center of $\Ext_A^*(M,M)$. Moreover, if $A$ satisfies {\rm \textbf{(fg)}}, then $\Ext_A^*(M,M)$ is noetherian and a finite module over its graded center.
\item  $\HH^*(A)$ maps to the graded center of $\HH^*(A,R)$. Moreover, if $A$ satisfies {\rm \textbf{(fg)}}, then $\HH^*(A,R)$ is noetherian and a finite module over its graded center.
\end{enumerate}
\end{prop}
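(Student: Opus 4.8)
\textit{The plan is to} run exactly the machinery of Proposition~\ref{prop:hfg}, but transported from the monoidal category of $A$-modules to that of $A$-bimodules. Concretely, I would take $\mathcal C$ to be the derived category $D(A^e)$ of $A$-bimodules, equipped with the derived tensor product $\otimes=\otimes_A^{\mathbb L}$, unit object $e=A$, and shift $T=[1]$; this is a suspended monoidal category in the sense of our earlier definition. With these choices one has $E(e)=\bigoplus_i\Ext^i_{A^e}(A,A)=\HH^*(A)$, and for any unital algebra object $R$ in $({\rm Mod}(A^e),\otimes_A)$ the graded ring $E(R)=\bigoplus_i\Ext^i_{A^e}(A,R)$ is precisely $\HH^*(A,R)$ with its cup-product ring structure. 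The point that makes $R$ a monoid in $\mathcal C$ is that the canonical comparison map $R\otimes_A^{\mathbb L}R\to R\otimes_A R$, composed with the multiplication $R\otimes_A R\to R$, supplies the required product in the derived category, and under this identification formula~\eqref{eq:prod} recovers the usual cup product on $\HH^*(A,R)$.

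For part (ii), $R$ is already given as a finite dimensional unital algebra object in ${\rm Mod}(A^e)$, so I would apply Corollary~\ref{cor:algebragc} with $W=e=A$ embedded in $R$ via the unit map $u\colon A\to R$. Because $W=e$, the two symmetry squares of Corollary~\ref{cor:algebragc} collapse to the unit isomorphisms $\sigma,\tau$ and therefore commute automatically, yielding that $E(e)=\HH^*(A)$ maps into the graded center of $E(R)=\HH^*(A,R)$. For the ``moreover'' clause, condition \textbf{(fg)} applied to the finite bimodule $R$ gives that $\HH^*(A,R)$ is a noetherian module over $\HH^*(A)$, and \textbf{(fg)} also forces $\HH^*(A)$ itself to be noetherian. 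Since the $\HH^*(A)$-action factors through its central image, $\HH^*(A,R)$ is a finite module over its graded center, and being finite as a module over a central noetherian subring it is noetherian as a ring.

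Part (i) is then the special case $R=\End_k(M)\cong M\otimes_k M^*$: this is a finite dimensional unital algebra object in $A$-bimodules, the multiplication being composition of endomorphisms, which is $A$-balanced (for $\phi\cdot a$ acting on the right and $a\cdot\psi$ on the left one has $(\phi\cdot a)\circ\psi=\phi\circ(a\cdot\psi)$) and hence descends to $\otimes_A$. Using the standard isomorphism $\Ext^*_A(M,M)\cong\HH^*(A,\End_k(M))=E(\End_k(M))$, which is compatible with the $\HH^*(A)$-actions on both sides, the argument of part (ii) applies verbatim: $\HH^*(A)$ maps to the graded center of $\Ext^*_A(M,M)$, and under \textbf{(fg)} (applied now to the finite bimodule $\End_k(M)$) the latter is noetherian and finite over its graded center. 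This recovers the cited special case of \cite[Theorem~1.1]{SO}.

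\textit{The hard part will be} the setup of the first paragraph. Unlike the situation of Proposition~\ref{prop:hfg}, where $\otimes_k$ is exact and passage to the derived category is harmless, here $\otimes_A$ is not exact, so one must genuinely work with $\otimes_A^{\mathbb L}$ and verify both that $(D(A^e),\otimes_A^{\mathbb L},A,[1])$ satisfies the suspended monoidal axioms and that an algebra object in the abelian category lifts to a monoid in the derived category. The saving grace is that in every diagram we actually invoke for Corollary~\ref{cor:algebragc} we tensor only against the unit $e=A$, which is strictly flat over itself, so those diagrams never meet the non-exactness; the derived tensor is needed solely to define the ring structure on $E(R)$, and there the canonical map to the underived tensor is all that is required.
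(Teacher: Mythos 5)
Your proposal is correct and takes essentially the same approach as the paper: the paper's proof likewise applies Theorem~\ref{thm:gc}/Corollary~\ref{cor:algebragc} in the derived category of ${\rm Mod}(A^e)$ with $\otimes=\otimes_A$, $e=A$, $T=[1]$, identifies $E(e)=\HH^*(A)$ and $E(R)=\HH^*(A,R)$, and invokes {\rm \textbf{(fg)}} for the noetherianness and finiteness over the graded center exactly as you do. The only cosmetic differences are that for (i) the paper cites Theorem~\ref{thm:gc} directly on $\HH^*(A,\Hom_k(M,M))$ rather than routing through part (ii) with $R=\End_k(M)$, and it leaves implicit the derived-tensor point ($\otimes_A^{\mathbb L}$ versus $\otimes_A$) that you carefully address.
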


\begin{proof}
Again, apply Theorem~\ref{thm:gc} where $\mathcal C$ is the left derived category of ${\rm Mod}(A^e)$ with $\otimes=\otimes_A$, $e=A$ and $T=[1]$ is the shift functor. 

(i) Note that $\Ext_A^*(M, M)\cong \HH^*(A, \Hom_k(M,M))$ and the standard actions of $\HH^*(A)$ on either one correspond under this isomorphism. By Theorem~\ref{thm:gc}, $\HH^*(A)$ maps to the graded center of $\HH^*(A, \Hom_k(M,M))$. 
Moreover, if $A$ satisfies {\rm \textbf{(fg)}}, then $\HH^*(A, \Hom_k(M,M))$ is finitely generated as a module over the noetherian graded central subalgebra $\HH^*(A)$, and as a consequence it is noetherian. 

(ii) Since $R$ is $A$-unital, by Corollary \ref{cor:algebragc}, $\HH^*(A)$ maps to the graded center of $\HH^*(A,R)$. Moreover, if $A$ satisfies {\rm \textbf{(fg)}}, $\HH^*(A,R)$ is finitely generated as a module over the noetherian graded central subalgebra $\HH^*(A)$, and as a consequence it is noetherian. 
\end{proof}

\begin{lemma} 
\label{lem:key}
Let $R$ be a commutative noetherian ring with characteristic $m>0$. Let $\{E^{i,j}_r\}_{r,i,j}$ be a convergent multiplicative spectral sequence of $R$-algebras concentrated in the half plane $i+j \ge 0$. Assume that for some $r_0\ge 1$,
\begin{enumerate}
\item $E_{r_0}^{*,*}$ is a finitely generated module over its graded center 
(grading by total degree), and 
\item $E_{r_0}^{*,*}$ is a noetherian $R$-algebra.
\item The spectral sequence $\{E^{i,j}_r\}_{r,i,j}$ collapses at some page $r_1\ge r_0$.
\end{enumerate}
Then $E_\infty^{*,*}$ is a noetherian $R$-algebra. 

Additionally, let $\{\widetilde{E}^{i,j}_r\}_{r,i,j}$ be a convergent spectral sequence that is a differential bigraded module over $\{E^{i,j}_r\}_{r,i,j}$. Suppose that for the same value $r_0$, $\widetilde{E}_{r_0}^{*,*}$ is finitely generated over $E_{r_0}^{*,*}$. Then $\widetilde{E}_\infty^{*,*}$ is finitely generated over $E_\infty^{*,*}$.
\end{lemma}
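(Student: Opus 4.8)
The plan is to reduce the entire statement to the construction of a single \emph{central} subalgebra $N$ of $E_{r_0}^{*,*}$ that (a) consists of permanent cocycles, (b) is noetherian, and (c) has the property that $E_{r_0}^{*,*}$ is a finite $N$-module. Once such an $N$ is produced, everything follows formally. Since the elements of $N$ are permanent cocycles, the Leibniz rule shows that multiplication by $N$ commutes with every differential $d_r$ for $r\ge r_0$, so each subsequent page $E_{r+1}^{*,*}=\ker d_r/\operatorname{im}d_r$ is a subquotient of $E_r^{*,*}$ \emph{as an $N$-module}. As $N$ is noetherian and $E_{r_0}^{*,*}$ is a finite $N$-module, every later page is again a finite $N$-module; by the collapse hypothesis (iii) one has $E_\infty^{*,*}=E_{r_1}^{*,*}$, so $E_\infty^{*,*}$ is a finite module over the central noetherian subalgebra $N$ and is therefore itself a noetherian ring. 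The module statement is handled identically: the differentials $\widetilde d_r$ are $N$-linear because $\widetilde E$ is a differential module over $E$ and $N$ consists of cycles, so each $\widetilde E_{r+1}^{*,*}$ is a subquotient $N$-module of $\widetilde E_r^{*,*}$; starting from the finiteness of $\widetilde E_{r_0}^{*,*}$ over $E_{r_0}^{*,*}$ (hence over $N$), we get that $\widetilde E_\infty^{*,*}$ is a finite $N$-module, and since the $N$-action factors through $E_\infty^{*,*}$ this means $\widetilde E_\infty^{*,*}$ is finite over $E_\infty^{*,*}$.

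To build $N$ I would first extract the commutative backbone. By hypotheses (i) and (ii) together with Proposition~\ref{prop:1}, the graded center $Z$ of $E_{r_0}^{*,*}$ is noetherian and finitely generated as an algebra, $E_{r_0}^{*,*}$ is a finite $Z$-module, and $Z$ is a finite module over its commutative even part $Z^{\rm ev}$, which is in turn finitely generated; hence $E_{r_0}^{*,*}$ is a finite module over the finitely generated commutative $R$-algebra $Z^{\rm ev}$. I then fix homogeneous $R$-algebra generators $x_1,\dots,x_n$ of $Z^{\rm ev}$, all of even total degree, and set $k:=r_1-r_0$.

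The heart of the argument is the Frobenius-type construction of permanent cocycles, which is where positive characteristic enters. For a homogeneous even-degree central element $z$, the Leibniz rule gives $d_{r_0}(z^m)=m\,z^{m-1}d_{r_0}(z)=0$, because $R$, and hence the whole spectral sequence, has characteristic $m$; thus $z^m$ is a $d_{r_0}$-cycle and determines a class on $E_{r_0+1}^{*,*}$, whose $m$-th power is the class of $z^{m^2}$. Repeating this on each successive page---using that the product on cohomology is induced from that on the previous page---shows by induction that $z^{m^k}$ is killed by each of $d_{r_0},\dots,d_{r_1-1}$, and since the sequence collapses at $r_1$ it is a permanent cocycle. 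I define $N$ to be the $R$-subalgebra of $Z^{\rm ev}$ generated by $x_1^{m^k},\dots,x_n^{m^k}$. Each generator is a permanent cocycle lying in the graded center (and its image remains central on every page), so $N$ consists of central permanent cocycles; $N$ is noetherian by the Hilbert basis theorem; and each $x_i$ is integral over $N$ via the \emph{monic} polynomial $T^{m^k}-x_i^{m^k}$, so $Z^{\rm ev}$---and therefore $E_{r_0}^{*,*}$---is a finite $N$-module. This is precisely the $N$ required above.

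I expect the main obstacle to be this permanent-cocycle step, not the formal propagation through pages. The delicate point is that between $E_{r_0}$ and the collapse page $E_{r_1}$ the differentials may well be nonzero, so one cannot simply transport finiteness; one must manufacture genuinely surviving central elements and, simultaneously, retain module-finiteness over them. The characteristic-$m$ power trick secures survival, while the observation that $T^{m^k}-x_i^{m^k}$ is monic---so that integrality, and hence module-finiteness, over $N$ holds for \emph{any} noetherian base $R$ of characteristic $m$, with no need for $R$ to be finite over its Frobenius image---is what keeps finiteness intact alongside survival. A secondary item to check carefully is that images of central cycles remain central on each page and that the subquotient $N$-module structure is respected by the $\widetilde d_r$, but these are routine consequences of multiplicativity and the Leibniz rule.
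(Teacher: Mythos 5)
Your construction of $N$ and your proof of the first assertion are correct and essentially coincide with the paper's own argument: the paper likewise passes to the graded center $C^{*,*}_{r_0}$ of $E^{*,*}_{r_0}$ (noetherian and affine by the graded Artin--Tate argument, Proposition~\ref{prop:1}(3)), uses the Leibniz rule in characteristic $m$ to show that $c^2$ (odd degree) or $c^m$ (even degree) is a cycle for central $c$, and uses collapse at page $r_1$ to promote finite powers $c_i^{t_i}$ of the generators to permanent cocycles generating a subalgebra $A^{*,*}_{r_0}$ of permanent central cycles over which $E^{*,*}_{r_0}$ is module-finite. Your variants --- working inside $Z^{\rm ev}$ so that only the even-degree computation $d(z^m)=m\,z^{m-1}d(z)=0$ is needed, taking the uniform exponent $m^{r_1-r_0}$, and making the module-finiteness explicit via the monic polynomials $T^{m^k}-x_i^{m^k}$ --- are harmless cosmetic differences, and your use of collapse to identify $E_\infty^{*,*}=E_{r_1}^{*,*}$ legitimately lets you skip the paper's chain-stabilization step for the algebra half.

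However, the module half is \emph{not} ``handled identically,'' and there is a genuine gap there. Hypothesis (iii) asserts collapse only for $\{E_r^{i,j}\}$, not for $\{\widetilde{E}_r^{i,j}\}$: the differentials $\widetilde{d}_r$ may be nonzero for all $r\ge r_1$, so $\widetilde{E}_\infty^{*,*}$ is not any finite page. Moreover, since the spectral sequence is only assumed concentrated in the half plane $i+j\ge 0$ (not in the first quadrant), one cannot even argue that each bidegree $\widetilde{E}^{i,j}_r$ stabilizes for large $r$: the incoming differentials $\widetilde{d}_r\colon \widetilde{E}_r^{i-r,\,j+r-1}\to \widetilde{E}_r^{i,j}$ have sources of total degree $i+j-1$, which remain in the half plane for every $r$. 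Consequently, knowing that every page $\widetilde{E}_r^{*,*}$ is a finite $N$-module does not by itself yield finiteness of $\widetilde{E}_\infty^{*,*}$, which is where your proposal simply asserts the conclusion. The missing step is exactly where the paper runs its Evens-style argument: since $N$ is noetherian and $\widetilde{E}_{r_0}^{*,*}$ is a finite, hence noetherian, $N$-module (the $\widetilde{d}_r$ being $N$-linear, as you observe), the $N$-submodule $\widetilde{\Lambda}^{*,*}_{r_0}\subseteq \widetilde{E}^{*,*}_{r_0}$ of permanent cycles is finitely generated --- equivalently, the kernels of the surjections $\widetilde{\Lambda}^{*,*}_{r_0}\twoheadrightarrow \widetilde{\Lambda}^{*,*}_r$, with $\widetilde{\Lambda}^{*,*}_{r+1}=\widetilde{\Lambda}^{*,*}_{r}/\widetilde{d}_r(\widetilde{E}^{*,*}_r)$, stabilize --- and $\widetilde{E}_\infty^{*,*}$ is a quotient of $\widetilde{\Lambda}^{*,*}_{r_0}$, hence finite over $N$ and therefore over $E_\infty^{*,*}$. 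This is a one-step repair entirely within your framework, but as written your closing claim that the ``formal propagation through pages'' is routine is precisely where the nontrivial content lies for $\widetilde{E}$.
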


\begin{proof}
For each $r\geq 0$, let $C^{*,*}_r$ be the graded center of $E_r^{*,*}$.
Then for each $c\in C^{i,j}_r$ and $x\in E^{i',j'}_r$, 
\begin{equation}\label{eqn:key-cx}
   cx = (-1)^{(i+j)(i'+j')} x c 
\end{equation}
and $d(c)\in C^{i-1,j}_r \oplus C^{i, j-1}_r$.
As a consequence of equation~(\ref{eqn:key-cx}), $d(c)$ commutes with $c$, 
i.e.,~$c d(c) = d(c) c$. Thus if $i$ is odd, then
\[
  d(c^2)= d(c) c + (-1)^i c d(c) = 0 ,
\]
while if $i$ is even, since $m = \chara (R)$, we have
\[
   d(c^m) = d(c) c^{m-1} + c d(c) c^{m-2} +
   \cdots + c^{m-1} d(c) = m d(c) c^{m-1}=0 .
\]
It follows that for each $c$ in $C^{i,j}_r$ there is a finite positive power of $c$ that is
a cycle (i.e.,~$c^2$ if $i$ is odd, and $c^m$ if $i$ is even).  

By hypothesis, $E^{*,*}_{r_0}$ is a noetherian $R$-algebra
that is a finitely generated module over $C^{*,*}_{r_0}$.
By Proposition~\ref{prop:1}(3), $C^{*,*}_{r_0}$ is noetherian 
and finitely generated.
Let $c_1,\ldots, c_n$ be a set of homogeneous generators of
$C^{*,*}_{r_0}$. 
As above, for each $i$, some positive power of $c_i$ is a cycle. By repeating the process, for each $i$, there is a finite positive power, say $c_i^{t_i}$, that is a permanent cycle since the spectral sequence $\{E^{i,j}_r\}_{r,i,j}$ collapses at some page $r_1\ge r_0$. Let
$$A^{*,*}_{r_0}:= R \langle c_1^{t_1},\ldots, c_n^{t_n} \rangle \subseteq C^{*,*}_{r_0},$$
that is, $A_{r_0}^{*,*}$ is the subalgebra of $C_{r_0}^{*,*}$ generated by $c_1^{t_1},\ldots, c_n^{t_n}$.
By its definition, $A_{r_0}^{*,*}$ consists of permanent cycles and $C^{*,*}_{r_0}$ is
finitely generated as a module over $A^{*,*}_{r_0}$. It follows that $E^{*,*}_{r_0}$ is also a finitely generated module
over $A^{*,*}_{r_0}$.
For all $r>r_0$, let 
$$A_r^{*,*} := A^{*,*}_{r-1} / B_{r-1}^{*,*} \subseteq E^{*,*}_r ,$$ 
the subalgebra of $E^{*,*}_r$ given by
the quotient of $A^{*,*}_{r-1}$ by its ideal $B_{r-1}^{*,*}$ 
consisting of coboundaries. 
By construction, we have a sequence of algebras:
\[
\xymatrix{
  A^{*,*}_{r_0} \ar@{->>}[r] & \cdots \ar@{->>}[r]
   & A_r^{*,*} \ar@{->>}[r] & A _{r+1}^{*,*} \ar@{->>}[r]& \cdots ,
}
\]
for which each $A_i^{*,*}$ is a subalgebra of $E_i^{*,*}$. 
For each $r \geq r_0$, let $\Lambda ^{*,*}_r$ be the subalgebra
consisting of \emph{all permanent} cycles in $E^{*,*}_r$. 
A calculation shows that since $A^{*,*}_r$ consists of permanent cycles,  
$d_r (E^{*,*}_r)$ is an $A^{*,*}_r$-submodule of $\Lambda^{*,*}_r$.

Writing $\Lambda^{*,*}_{r+1} := \Lambda_r^{*,*}/ d_r( E_r^{*,*})$
for each $r$, we have a sequence of $A^{*,*}_{r_0}$-modules:
\[
\xymatrix{
  \Lambda^{*,*}_{r_0} \ar@{->>}[r] & \cdots \ar@{->>}[r]
   & \Lambda_r^{*,*} \ar@{->>}[r] & \Lambda _{r+1}^{*,*} \ar@{->>}[r]& \cdots .
}
\]
Now $\Lambda^{*,*}_{r_0}$ is an $A^{*,*}_{r_0}$-submodule of
$E^{*,*}_{r_0}$, and so is a noetherian $A^{*,*}_{r_0}$-module.
Let $K_r$ be the kernel of the surjection from $\Lambda^{*,*}_{r_0}$
to $\Lambda_r^{*,*}$. Then $K_r$ is also a noetherian $A^{*,*}_{r_0}$-module.
There is an increasing chain of submodules of $\Lambda^{*,*}_{r_0}$:
\[
   K_{r_0} \subseteq  K_{r_0+1} \subseteq \cdots .
\]
Since $\Lambda^{*,*}_{r_0}$ is noetherian, this chain stabilizes, 
that is $K_s = K_{s+1} = \cdots$, for some $s \ge r_0$.
Therefore, $\Lambda^{*,*}_s = E^{*,*}_{\infty}$, and
$E^{*,*}_{\infty}$ is itself a noetherian $A^{*,*}_{r_0}$-module. 

By~\cite[Proposition~2.1]{Evens}, $E^{*,*}_{\infty}$ is a noetherian module
over $\Tot (A^{*,*}_{r_0})$.
Since $A^{*,*}_{r_0}$ is finitely generated, it follows
that $E^{*,*}_{\infty}$ is a noetherian $R$-algebra.

For the remaining statement regarding $\widetilde{E}^{i,j}$, under the above set up and hypotheses, $\widetilde{E}^{*,*}_{r_0}$ is also a finitely generated module over $A^{*,*}_{r_0}$. For each $r \geq r_0$, let $\widetilde{\Lambda} ^{*,*}_r$ be the subalgebra consisting of \emph{all} permanent cycles in $\widetilde{E}^{*,*}_r$ and define $\widetilde{\Lambda}^{*,*}_{r+1} := \widetilde{\Lambda}_r^{*,*}/ d_r( \widetilde{E}_r^{*,*})$. Now $\widetilde{\Lambda}^{*,*}_{r_0}$ is an $A^{*,*}_{r_0}$-submodule of
$\widetilde{E}^{*,*}_{r_0}$, and so is a finitely generated $A^{*,*}_{r_0}$-module. Similar arguments show that $\widetilde{E}^{*,*}_{\infty}$ is a noetherian $A^{*,*}_{r_0}$-module and is finitely generated over $E^{*,*}_{\infty}$.
\end{proof}


\noindent
{\bf (Hochschild) Cohomology of filtered algebras:}
Let $A$ be a finite dimensional filtered (Hopf) algebra over a field $k$, and denote by $\gr A$ the corresponding associated graded (Hopf) algebra. Then there exists a May spectral sequence computing the (Hochschild) cohomology of $A$ in terms of the (Hochschild) cohomology of $\gr A$ as its first page. Similarly, there is a May spectral sequence that computes the (Hochschild) cohomology of $A$ with coefficients in any finite $A$-module $M$. Moreover, these spectral sequences inherit structures that are compatible with the cup product in the (Hochschild) cohomology of $A$ and its module structure on that of $M$ (see Appendix~\ref{subsec:May} and Appendix \ref{subsec:May-HH}).

\begin{theorem} 
\label{thm:FFGC}
Let $A$ be a finite dimensional filtered algebra (resp.~Hopf algebra) over a field $k$ of positive characteristic. If the associated graded algebra (resp.~Hopf algebra) $\gr A$ satisfies {\rm \textbf{(fg)}} (resp.~{\rm \textbf{(hfg)}}) and the May spectral sequence used to compute $\HH^*(A)$ (resp.~$\coh^*(A,k)$) collapses, then $A$ satisfies {\rm \textbf{(fg)}} (resp.~{\rm \textbf{(hfg)}}).
\end{theorem}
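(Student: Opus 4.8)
The plan is to reduce Theorem~\ref{thm:FFGC} to an application of Lemma~\ref{lem:key} (the key spectral sequence result), together with the graded-center finiteness provided by Propositions~\ref{prop:hfg} and~\ref{prop:fg}. The two cases (filtered algebra with \textbf{(fg)}, filtered Hopf algebra with \textbf{(hfg)}) run in parallel, so I would set up one argument with the understanding that in the associative case the relevant cohomology is $\HH^*(A)$ and the spectral sequence is the May spectral sequence of Appendix~\ref{subsec:May-HH}, while in the Hopf case it is $\coh^*(A,k)$ and the May spectral sequence of Appendix~\ref{subsec:May}. By Proposition~\ref{equivfg*} (resp.\ Proposition~\ref{equivfg}), verifying \textbf{(hfg)} (resp.\ \textbf{(fg)}) amounts to checking that the cohomology ring is finitely generated and that $\Ext^*_A(M,N)$ is finitely generated over it for finite modules $M,N$; so I must feed the module-coefficient version of the May spectral sequence into the second (module) half of Lemma~\ref{lem:key}.

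The first step is to identify the first page $E_1^{*,*}$ of the May spectral sequence with the (Hochschild) cohomology of $\gr A$. Since $\gr A$ satisfies \textbf{(fg)} (resp.\ \textbf{(hfg)}), Proposition~\ref{prop:fg}(i) (resp.\ Proposition~\ref{prop:hfg}(i)) shows that $\Ext^*_{\gr A}(M,M)$ is noetherian and a finite module over its graded center, and similarly $\HH^*(\gr A)$ (resp.\ $\coh^*(\gr A,k)$) is noetherian and finite over its graded center. This is exactly hypotheses (1) and (2) of Lemma~\ref{lem:key} with $r_0=1$ and $R=k$: $E_1^{*,*}$ is a finitely generated module over its graded center and is a noetherian $k$-algebra. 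The collapse hypothesis in the statement of the theorem supplies hypothesis (3) of Lemma~\ref{lem:key} (the spectral sequence collapses at some page $r_1\ge 1$). Applying the lemma then yields that $E_\infty^{*,*}$ is a noetherian $k$-algebra. Because the May spectral sequence is a convergent multiplicative spectral sequence whose abutment is the associated graded of a finite filtration on $\HH^*(A)$ (resp.\ $\coh^*(A,k)$), finite generation of $E_\infty^{*,*}$ lifts to finite generation of the cohomology ring itself: a standard filtration argument shows that if $\gr(\HH^*(A))$ is noetherian, then so is $\HH^*(A)$.

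The second step handles the module condition. For finite modules $M,N$ there is a module May spectral sequence $\{\widetilde E_r^{*,*}\}$ that is a differential bigraded module over $\{E_r^{*,*}\}$ and converges to $\Ext^*_A(M,N)$; its first page is $\Ext^*_{\gr A}(\gr M,\gr N)$ (for suitable induced filtrations on $M$ and $N$). Since $\gr A$ satisfies \textbf{(fg)} (resp.\ \textbf{(hfg)}), the equivalent conditions of Proposition~\ref{equivfg} (resp.\ Proposition~\ref{equivfg*}) give that $\widetilde E_1^{*,*}$ is finitely generated over $E_1^{*,*}$. The additional (module) conclusion of Lemma~\ref{lem:key} then shows $\widetilde E_\infty^{*,*}$ is finitely generated over $E_\infty^{*,*}$, and again lifting through the finite filtration yields that $\Ext^*_A(M,N)$ is finitely generated over the cohomology ring of $A$. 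Combining the two steps with the relevant equivalence proposition concludes that $A$ satisfies \textbf{(fg)} (resp.\ \textbf{(hfg)}).

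The main obstacle I anticipate is not the formal bookkeeping but verifying that the hypotheses of Lemma~\ref{lem:key} are genuinely met by the May spectral sequence — specifically, confirming that the first page is exactly the cohomology of $\gr A$ as a \emph{multiplicative} spectral sequence of $k$-algebras with the graded center behaving as required, and that the convergence is of the right (half-plane, finite filtration) type so that finite generation passes from $E_\infty$ to the abutment. The positive-characteristic assumption is essential precisely here: it is what allows Lemma~\ref{lem:key} to promote graded-center generators to permanent cocycles via the Frobenius-type power trick ($c^m$ is a cycle when $m=\chara k$). Once the compatibility of the May spectral sequence with these structures is pinned down (which is the content of the Appendix), the theorem follows formally.
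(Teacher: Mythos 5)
Your proposal is correct and follows essentially the same route as the paper: the paper's proof is precisely an application of Lemma~\ref{lem:key} with $r_0=1$ to the May spectral sequences of Appendix~\ref{subsec:May} and Appendix~\ref{subsec:May-HH}, followed by lifting noetherianity through the finite filtration. The only (harmless) difference is that where you invoke Propositions~\ref{prop:hfg}(i) and~\ref{prop:fg}(i) to get finiteness over the graded center of the first page, the paper simply observes that the (Hochschild) cohomology ring of $\gr A$ is graded commutative, hence equal to its own graded center, so hypothesis (1) of Lemma~\ref{lem:key} holds trivially.
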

\begin{proof}
It follows from Lemma \ref{lem:key} and the discussion above since the (Hochschild) cohomology ring of $A$ is always graded commutative. 
\end{proof}


\noindent
{\bf Cohomology of crossed products:} Let $A=R \#_\sigma H$ be the crossed product of two finite dimensional Hopf algebras $R$ and $H$ over a field $k$ with respect to a cocycle $\sigma$ (see~\cite{MO93}). We assume that the augmentations of $R$ and $H$ are preserved under the crossed product so that $A$ is again augmented. In this case, there are Lyndon-Hochschild-Serre spectral sequences associated to the crossed product $A=R \#_\sigma H$ and any finite dimensional $A$-module $M$:
\begin{align*}
E_2^{p,q}(A)=\coh^p(H,\coh^q(R,k))&~\Longrightarrow~ \coh^{p+q}(A,k)=E_\infty^{p,q}(A), \\
E_2^{p,q}(M)=\coh^p(H,\coh^q(R,M))&~ \Longrightarrow~ \coh^{p+q}(A,M)=E_\infty^{p,q}(M).
\end{align*}
These spectral sequences inherit structures that are compatible with the multiplicative structure of $\coh^*(A,k)$ and its module structure on $\coh^*(A,M)$ (see Appendix~\ref{subsec:cohomology-smash}).

\begin{theorem}
\label{thm:FiniteTypeCoh}
Retain the notations above. Further assume that $R$ satisfies {\rm \textbf{(hfg)}} and $H$ satisfies {\rm \textbf{(hfg*)}}. Then $A=R \#_\sigma H$ satisfies {\rm \textbf{(hfg)}} if the corresponding LHS spectral sequence $E^{*,*}_r(A)$ collapses for 
\begin{enumerate}
\item $r=2$ when $\chara(k)=0$, or 
\item some $r\ge 2$ when $\chara(k)>0$.
\end{enumerate}
\end{theorem}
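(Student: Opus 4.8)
The statement asserts that if $R$ satisfies \textbf{(hfg)}, $H$ satisfies \textbf{(hfg*)}, and the LHS spectral sequence $E^{*,*}_r(A)$ collapses (at $r=2$ in characteristic $0$, or at some page $r \ge 2$ in positive characteristic), then the crossed product $A = R \#_\sigma H$ satisfies \textbf{(hfg)}. My strategy is to apply Lemma~\ref{lem:key} to the LHS spectral sequence, after first verifying that its second page $E_2^{*,*}(A) = \coh^*(H, \coh^*(R,k))$ satisfies the two input hypotheses of that lemma (finite generation over its graded center, and noetherianity as an algebra), and that the module spectral sequence $E_2^{*,*}(M) = \coh^*(H, \coh^*(R,M))$ is finitely generated over $E_2^{*,*}(A)$. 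The two characteristic cases must be handled slightly differently: in positive characteristic I use the full Frobenius machinery of Lemma~\ref{lem:key} (constructing permanent cocycles from powers of central generators), whereas in characteristic $0$ the collapse is assumed at $r=2$, so no permanent-cocycle construction is needed and one reads off the conclusion directly from $E_2 = E_\infty$.

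\textbf{Verifying the inputs at the second page.} The key observation is that $T := \coh^*(R,k)$ is itself an $H$-module algebra, since $H$ acts on $R$ through the crossed-product structure and hence on the cohomology of $R$. By Proposition~\ref{prop:hfg}(i) applied to $R$, which satisfies \textbf{(hfg)}, the ring $T = \coh^*(R,k)$ is noetherian and is a finite module over its graded center $Z := C(\coh^*(R,k))$; moreover $\coh^*(R,k)$ maps to this graded center. The graded center $Z$ is a graded central $H$-module subalgebra of $T$, so $(T, Z)$ is precisely the kind of pair fed into Lemma~\ref{lem:hfg}(iv) for the Hopf algebra $H$. Since $H$ satisfies \textbf{(hfg*)} (and hence \textbf{(hfg)}), and $k = T_0$ sits in $T$, Lemma~\ref{lem:hfg}(iv) applied with the module $M = T$ (viewing $T$ as a module over $T \# H$) shows that $\coh^*(H, T) = E_2^{*,*}(A)$ is noetherian over $\coh^*(H, Z^{\mathrm{ev}})$. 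Combining with Proposition~\ref{prop:hfg}(ii) — applied to the $H$-module algebra $T$ that is a finite module over the graded central subalgebra $Z$ — I conclude that $E_2^{*,*}(A) = \coh^*(H,T)$ is a noetherian algebra and is a finite module over \emph{its own} graded center. This gives conditions (1) and (2) of Lemma~\ref{lem:key} with $r_0 = 2$ and base ring $R = k$. For the module statement, the same circle of ideas applied to $\coh^*(H, \coh^*(R,M))$, using \textbf{(hfg*)} for $H$ and the fact that $\coh^*(R,M)$ is finitely generated over $\coh^*(R,k)$ by \textbf{(hfg)} for $R$, shows $E_2^{*,*}(M)$ is finitely generated over $E_2^{*,*}(A)$.

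\textbf{Invoking Lemma~\ref{lem:key} and closing the two cases.} With the second-page hypotheses established, the positive-characteristic case (ii) follows immediately: taking $r_0 = 2$, condition (3) of Lemma~\ref{lem:key} is the assumed collapse at some page $r_1 \ge 2$, so the lemma yields that $E_\infty^{*,*}(A)$ is a noetherian $k$-algebra and $E_\infty^{*,*}(M)$ is finitely generated over it. Since the LHS spectral sequence converges to $\coh^*(A,k)$ (respectively $\coh^*(A,M)$) and is multiplicative, the associated graded of $\coh^*(A,k)$ under the filtration is $E_\infty^{*,*}(A)$; standard filtered-algebra arguments (finite generation of the associated graded lifts to finite generation of the filtered object) then give that $\coh^*(A,k)$ is a finitely generated noetherian algebra and $\coh^*(A,M)$ is finitely generated over it, i.e. $A$ satisfies \textbf{(hfg)}. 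In the characteristic-$0$ case (i), the collapse is assumed at $r=2$, so $E_2 = E_\infty$ and one does not need any Frobenius-power construction; the finite generation of $E_\infty^{*,*}(A) = E_2^{*,*}(A)$ established above transfers directly to $\coh^*(A,k)$. \textbf{The main obstacle} I anticipate is not the spectral-sequence bookkeeping but rather correctly identifying that $Z = C(\coh^*(R,k))$ is an \emph{$H$-stable} graded central subalgebra, so that $H$ indeed acts on the pair $(T,Z)$ and Lemma~\ref{lem:hfg}(iv) genuinely applies; the graded center is canonically defined and hence preserved by the $H$-action, but one must check this compatibility explicitly, along with confirming that the $H$-action on $\coh^*(R,k)$ arising from the crossed-product (as opposed to an honest smash product with cocycle $\sigma$ trivial) still makes $T$ into an $H$-module algebra in the sense required by the earlier results.
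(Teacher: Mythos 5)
Your proposal is correct and takes essentially the same route as the paper's proof: establish the hypotheses of Lemma~\ref{lem:key} at $r_0=2$ via Proposition~\ref{prop:hfg}(ii) and Lemma~\ref{lem:hfg}(iv), then apply that lemma in positive characteristic, while in characteristic zero the assumed collapse at $E_2$ gives $E_2=E_\infty$ directly with no permanent-cocycle construction. The obstacle you flag at the end is handled exactly as you anticipate, by Proposition~\ref{prop:dgmod-ss-coh}: even for a nontrivial cocycle $\sigma$ (where $H$ only weakly acts on $R$), the equivariant-resolution construction makes $\Ext^*_R(k,k)$ an honest graded $H$-module algebra and $\Ext^*_R(k,M)$ a graded $\Ext^*_R(k,k)\# H$-module, and your concern about $H$-stability of the graded center is moot since $\coh^*(R,k)$ is graded commutative ($R$ being a Hopf algebra) and hence equals its own graded center, so one may simply take $Z=\coh^*(R,k)$ itself.
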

\begin{proof}
Since $R$ satisfies {\rm \textbf{(hfg)}}, the cohomology ring $\coh^*(R,k)$ is graded commutative and finitely generated. In view of Proposition~\ref{prop:dgmod-ss-coh} and the fact that $H$ satisfies {\rm \textbf{(hfg*)}}, we know  $E_2^{*,*}(A)=\coh^*(H,\coh^*(R,k))$ is a finitely generated and noetherian algebra over $E_2^{0,0}(A)=k$ and it is a finite module over its graded center. Moreover since $\coh^*(R,M)$ is finite over $\coh^*(R, k)$, Lemma~\ref{lem:hfg}(iv) implies that $E_2^{*,*}(M)=\coh^*(H,\coh^*(R,M))$ is a finite module over $E_2^{*,*}(A)=\coh^*(H,\coh^*(R,k))$. Thus the result holds in characteristic zero and for positive characteristic we can apply Lemma \ref{lem:key} since all the assumptions there are satisfied when $r_0=2$. 
\end{proof}


\noindent
{\bf Hochschild cohomology of smash products:} Now let $A=R\#H$ be the smash product of a finite dimensional Hopf algebra $H$ and a finite dimensional $H$-module algebra $R$. There are Lyndon-Hochschild-Serre spectral sequences associated to the smash product $A=R \# H$ and any finite bimodule $M$ over $A$: 
\begin{align*}
E_2^{p,q}(A)=\coh^p(H,\HH^q(R,A)) &~\Longrightarrow~ \HH^{p+q}(A)=E_\infty^{p,q}(A),\\
E_2^{p,q}(M)=\coh^p(H,\HH^q(R,M)) &~\Longrightarrow~ \HH^{p+q}(A,M)=E_\infty^{p,q}(M).
\end{align*}
These spectral sequences naturally inherit structures that are compatible with the multiplicative structure of $\HH^*(A)$ and its module structure on $\HH^*(A,M)$ (see Appendix~\ref{subsec:hochschild}). 

\begin{theorem}\label{thm:FiniteTypeHochschild}
Retain the notations above. Further assume that $R$ satisfies {\rm \textbf{(fg)}} and $H$ is cocommutative. Then $A=R \# H$ satisfies {\rm \textbf{(fg)}} if the corresponding LHS spectral sequence $E^{*,*}_r(A)$ collapses for
\begin{enumerate}
\item $r=2$ when $\chara(k)=0$, or 
\item some $r\ge 2$ when $\chara(k)>0$.
\end{enumerate}
\end{theorem}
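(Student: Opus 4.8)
The plan is to run the same strategy as in the proof of Theorem~\ref{thm:FiniteTypeCoh}, now with the graded-commutative coefficient algebra $\coh^*(R,k)$ replaced by the (generally noncommutative) coefficient algebra $\HH^*(R,A)$, the hypothesis \textbf{(hfg)} on $R$ replaced by \textbf{(fg)}, and the role played by \textbf{(hfg*)} for $H$ taken over by cocommutativity together with Lemma~\ref{lem:cocom}. The goal is to verify the hypotheses of Lemma~\ref{lem:key} on the second page of the LHS spectral sequence (with $r_0=2$) in positive characteristic, and to argue directly from the filtration when $\chara(k)=0$. Throughout I use that a finite-dimensional cocommutative Hopf algebra $H$ satisfies \textbf{(hfg)} (by finite generation of cohomology for finite group schemes in positive characteristic, and because such an $H$ is a group algebra in characteristic zero, after a harmless field extension handled by Lemma~\ref{Rem:ext}) and has the integral property by Proposition~\ref{prop:integral}(iv).

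First I would analyze the second page $E_2^{*,*}(A)=\coh^*(H,\HH^*(R,A))$. Since $A=R\#H$ is a unital algebra in ${\rm Mod}(R^e)$ (an $R$-ring via $r\mapsto r\#1$) and $R$ satisfies \textbf{(fg)}, Proposition~\ref{prop:fg}(ii) gives that $\HH^*(R,A)$ is noetherian and a finite module over its graded center $Z$. The coefficient algebra $\HH^*(R,A)$ is a graded $H$-module algebra for the outer $H$-action built into the spectral sequence; because $H$ is cocommutative, a direct computation with the antipode shows that $h\cdot z$ is again central whenever $z$ is, so $Z$ is an $H$-module subalgebra. Thus $\HH^*(R,A)$ is a finitely generated noetherian graded $H$-module algebra that is finite over the graded central $H$-module subalgebra $Z$, and Lemma~\ref{lem:cocom} applies (with $H$ cocommutative and satisfying \textbf{(hfg)}) to conclude that $E_2^{*,*}(A)$ is finitely generated noetherian and a finite module over its graded center. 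For the module page, any finite $A$-bimodule $M$ is a finite $R$-bimodule, so by \textbf{(fg)} the module $\HH^*(R,M)$ is finite over $\HH^*(R)$; since this action factors through $\HH^*(R,A)$ via the $A$-module structure of $M$, it is finite over $\HH^*(R,A)$, and it carries a compatible $H$-action making it a finitely generated $\HH^*(R,A)\#H$-module. Lemma~\ref{lem:hfg}(iv) (valid since $H$ has the integral property and satisfies \textbf{(hfg)}) then shows $E_2^{*,*}(M)=\coh^*(H,\HH^*(R,M))$ is finite over $E_2^{*,*}(A)$.

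With these two facts in hand, I would finish as follows. When $\chara(k)>0$, conditions (1)--(3) of Lemma~\ref{lem:key} hold at $r_0=2$ (the base ring being $k$, of characteristic $p>0$), using the assumed collapse at some $r\ge 2$; Lemma~\ref{lem:key} yields that $E_\infty^{*,*}(A)$ is a noetherian $k$-algebra and $E_\infty^{*,*}(M)$ is finite over it. When $\chara(k)=0$, collapse at $r=2$ gives $E_\infty^{*,*}=E_2^{*,*}$ directly, so the conclusions of the previous paragraph pass to $E_\infty$. In either case $E_\infty^{*,*}(A)=\gr\HH^*(A)$ is finitely generated noetherian and $E_\infty^{*,*}(M)=\gr\HH^*(A,M)$ is finite over it for the (degreewise finite) LHS filtration; since the associated graded controls the filtered object, $\HH^*(A)$ is finitely generated and $\HH^*(A,M)$ is noetherian over it for every finite $A$-bimodule $M$. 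By Proposition~\ref{equivfg}, $A$ satisfies \textbf{(fg)}.

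The main obstacle I expect is the second paragraph: pinning down the $H$-module algebra structure on $\HH^*(R,A)$ coming from the spectral sequence and verifying that its graded center $Z$ is stable under this action. This is precisely where cocommutativity of $H$ is essential --- it makes the twist map an $H$-module morphism inside Lemma~\ref{lem:cocom} and simultaneously forces $Z$ to be $H$-stable --- replacing the stronger assumption \textbf{(hfg*)} that was available in Theorem~\ref{thm:FiniteTypeCoh}. A secondary point to check carefully is that the cup-product action of $\HH^*(R,A)$ on $\HH^*(R,M)$ and the outer $H$-action together genuinely make $\HH^*(R,M)$ an $\HH^*(R,A)\#H$-module, as required to invoke Lemma~\ref{lem:hfg}(iv).
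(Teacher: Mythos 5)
Your proposal is correct and follows the paper's overall skeleton (verify the hypotheses of Lemma~\ref{lem:key} on the $E_2$-page, invoke Friedlander--Suslin for $H$, use Lemma~\ref{lem:hfg}(iv) for the module page via the $\HH^*(R,A)\,\#\, H$-module structure supplied by Proposition~\ref{prop:dgmod-ss-hochschild}, then conclude from collapse and the finite filtration), but it implements the crucial equivariance step differently. The paper never touches the full graded center of $\HH^*(R,A)$: it takes the \emph{image of} $\HH^*(R)$ as the graded central subalgebra (central by Proposition~\ref{prop:fg}(ii), with $\HH^*(R,A)$ finite over it by {\rm \textbf{(fg)}}), and verifies its $H$-stability by a one-line chain-level computation with the explicit action $(f\cdot h)(u)=\sum S(h_1)f(h_2\cdot u)h_3$ on $\Hom_{R^e}(K_{\bu},R)\subset\Hom_{R^e}(K_{\bu},A)$, after which Proposition~\ref{prop:hfg}(ii) (with {\rm \textbf{(hfg*)}} for $H$, via Lemma~\ref{lem:hfg} and the integral property) gives the needed structure on $E_2^{*,*}(A)$. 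You instead assert that the \emph{entire} graded center $Z$ of $\HH^*(R,A)$ is $H$-stable and feed it into Lemma~\ref{lem:cocom}. Your claim is in fact true, though your ``direct computation with the antipode'' deserves to be spelled out, since individual elements of $H$ do not act by algebra automorphisms: writing $\Phi(z\ot b)=zb-(-1)^{|z||b|}bz$, cocommutativity makes the Koszul-signed flip, hence $\Phi$, an $H$-module map, and then $\Phi\bigl((h\cdot z)\ot b\bigr)=\sum \Phi\bigl(h_{(1)}\cdot(z\ot S(h_{(2)})\cdot b)\bigr)=\sum h_{(1)}\cdot\Phi\bigl(z\ot S(h_{(2)})\cdot b\bigr)=0$ by the antipode axiom and centrality of $z$. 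What each route buys: yours is a cleaner general statement (the graded center of any graded $H$-module algebra is $H$-stable for cocommutative $H$) that lets you quote Lemma~\ref{lem:cocom} verbatim with only {\rm \textbf{(hfg)}} for $H$ stated up front; the paper's choice of the smaller subalgebra $\HH^*(R)$ avoids any abstract argument at the cost of a concrete cochain verification, and routes through {\rm \textbf{(hfg*)}} instead --- the two are essentially equivalent in strength, since Lemma~\ref{lem:cocom} itself runs through Lemma~\ref{lem:hfg}. Your justification that $\HH^*(R,M)$ is finite over $\HH^*(R,A)$ (factoring the $\HH^*(R)$-action through $\HH^*(R,A)$) is also slightly more explicit than the paper's bare assertion, and is correct.
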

\begin{proof} 
Since $R$ satisfies {\rm \textbf{(fg)}}, $\HH^*(R,A)$ is a finite module over $\HH^*(R)$. This implies that $\HH^*(R,A)$ is finitely generated and noetherian since $\HH^*(R)$ is. Furthermore, $\HH^*(R)$ is mapped to the graded center of $\HH^*(R,A)$ via the embedding $R\hookrightarrow A$ by Proposition~\ref{prop:fg}(ii). Next we show that the image of $\HH^*(R)$ is an $H$-module subalgebra of $\HH^*(R,A)$. 

Using the notation in Appendix~\ref{subsec:hochschild}, let $K_{\bu}$ be an $H$-equivariant $R$-bimodule resolution of $R$. Then $\HH^*(R,A) \cong \coh^*(\Hom_{R^e}(K_{\bu},A))$ and $\HH^*(R) \cong \coh^*(\Hom_{R^e}(K_{\bu},R))$. Choose any $f \in \Hom_{R^e}(K_{\bu},R) \subset \Hom_{R^e}(K_{\bu},A)$, $h \in H$, and $u \in K_{\bu}$. Applying the $H$-action given in Appendix~\ref{subsec:hochschild}
and the assumption that $H$ is cocommutative, we find that
\begin{align*}
(f \cdot h)(u) &~=~\sum S(h_1) f(h_2 \cdot u) h_3 ~=~\sum [S(h_2) \cdot f(h_3 \cdot u)] S(h_1)h_4 \\
&~=~ \sum [S(h_3) \cdot f(h_4 \cdot u)] S(h_1)h_2 ~=~ \sum S(h_1) \cdot f(h_2 \cdot u) \in B. 
\end{align*}
Hence, $\Hom_{R^e}(K_{\bu},R)$ is an $H$-invariant subcomplex of $\Hom_{R^e}(K_{\bu},A)$. Passing to homology, the image of $\HH^*(R)$ is an $H$-module subalgebra of $\HH^*(R,A)$. 

As a result of Friedlander and Suslin~\cite{FS}, $H$ satisfies {\rm \textbf{(hfg)}} and hence it satisfies {\rm \textbf{(hfg*)}} by Lemma~\ref{lem:hfg} since $H$ is cocommutative (see~Proposition \ref{prop:integral}). By Proposition~\ref{prop:hfg}(ii), we know $E_2^{*,*}(A)=\coh^*(H,\HH^*(R,A))$ is a finitely generated noetherian algebra over the center $E_2^{0,0}(A)=Z(A)$ of $A$ and it is a finite module over its graded center. Moreover since $\HH^*(R,M)$ is a finite module over $\HH^*(R,A)$, Lemma~\ref{lem:hfg}(iv) implies that $E_2^{*,*}(M)=\coh^*(H,\HH^*(R,M))$ is finitely generated over $E_2^{*,*}(A)=\coh^*(H,\HH^*(R,A))$. Thus, the result holds in characteristic zero. For positive characteristic, we can apply Lemma \ref{lem:key} since all the assumptions there are satisfied when $r_0=2$. 
\end{proof}


\section{A lifting method for the finite generation conditions via reduction modulo $p$}
\label{sec:mod p}

\numberwithin{equation}{section}

In this section, we use the reduction modulo $p$ method in number theory to deal with the finite generation conditions for the (Hochschild) cohomology ring of (Hopf) algebras over a field of characteristic zero, or mainly over the field of complex numbers $\mathbb C$. 

Our first result suggests that regarding the finite generation conditions over a field of characteristic zero, it suffices to work over $\mathbb C$.

\begin{lemma}\label{lem:CN}
Let $k$ be an arbitrary field of characteristic zero, and $A$ be a finite dimensional algebra (resp. Hopf algebra) over $k$. Then there is some finite dimensional algebra $A'$ (resp. Hopf algebra) over $\mathbb C$, which is obtained from a field extension of some subring of $A$, such that $A$ satisfies {\rm (\textbf{fg})} (resp. {\rm (\textbf{hfg})}) if and only if $A'$ satisfies {\rm (\textbf{fg})} (resp. {\rm (\textbf{hfg})}). 
\end{lemma}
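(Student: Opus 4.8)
The plan is to reduce the statement to a chain of field extensions, each of which is handled by Lemma~\ref{Rem:ext} (and Remark~\ref{rem:FE} for the Hochschild case). The key point is that a finite dimensional algebra $A$ over $k$ is defined by finitely many structure constants, so it is already defined over a finitely generated subfield of $k$. More precisely, first I would fix a $k$-basis $e_1,\dots,e_n$ of $A$ and write the multiplication as $e_ie_j=\sum_\ell c_{ij}^\ell e_\ell$ with $c_{ij}^\ell\in k$; in the Hopf case I would also record the finitely many scalars appearing in the comultiplication, counit, antipode, and unit. Let $k_0=\mathbb{Q}(\{c_{ij}^\ell\},\dots)$ be the subfield of $k$ generated over $\mathbb{Q}$ by all these structure constants. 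Then there is a finite dimensional algebra (resp.\ Hopf algebra) $A_0$ over $k_0$ with $A\cong A_0\otimes_{k_0}k$, and $k_0$ is a finitely generated field extension of $\mathbb{Q}$, hence has characteristic zero.

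Next I would pass from $k_0$ up to $\mathbb{C}$. Since $k_0$ is finitely generated over $\mathbb{Q}$ and has characteristic zero, its transcendence degree over $\mathbb{Q}$ is at most countable, so $k_0$ embeds into $\mathbb{C}$ (as $\mathbb{C}$ has uncountable transcendence degree over $\mathbb{Q}$ and is algebraically closed). Fix such an embedding $k_0\hookrightarrow\mathbb{C}$ and set $A'=A_0\otimes_{k_0}\mathbb{C}$; this is a finite dimensional algebra (resp.\ Hopf algebra) over $\mathbb{C}$, obtained from a field extension of the subring (indeed subalgebra) $A_0$ of $A$, as required. By Lemma~\ref{Rem:ext} applied to the extension $k_0\hookrightarrow\mathbb{C}$, the algebra $A_0$ satisfies {\rm(\textbf{hfg})} if and only if $A'$ does; in the Hochschild setting one uses Remark~\ref{rem:FE} instead to get the analogous equivalence for {\rm(\textbf{fg})}.

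It remains to connect $A$ itself to $A_0$. Both arise from $A_0$ by extension of scalars along field extensions of $k_0$ (namely $k_0\hookrightarrow k$ for $A$ and $k_0\hookrightarrow\mathbb{C}$ for $A'$), so applying Lemma~\ref{Rem:ext} (resp.\ Remark~\ref{rem:FE}) once more to the extension $k_0\hookrightarrow k$ shows that $A$ satisfies the relevant finite generation condition if and only if $A_0$ does. Combining the two equivalences through the common intermediary $A_0$ yields that $A$ satisfies {\rm(\textbf{fg})} (resp.\ {\rm(\textbf{hfg})}) if and only if $A'$ does, which is the claim.

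The only genuinely delicate point is the construction of $A_0$ and the verification that base change is compatible, i.e.\ that $A\cong A_0\otimes_{k_0}k$ as algebras (resp.\ Hopf algebras). This is routine but must be done carefully: one needs the chosen structure constants to satisfy, over $k_0$, exactly the polynomial identities encoding associativity, coassociativity, the antipode axioms, and so forth, and these identities hold in $k_0$ precisely because they hold in $k$ and the constants lie in $k_0$. I expect this bookkeeping — especially tracking all the Hopf-structure constants so that $A_0$ is honestly a Hopf algebra over $k_0$ — to be the main obstacle, though it is conceptual rather than technical; the finite generation transfer itself is immediate from the already-established Lemma~\ref{Rem:ext} and Remark~\ref{rem:FE}.
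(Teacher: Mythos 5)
Your proposal is correct and follows essentially the same route as the paper's own proof: pass to the subfield of $k$ generated over $\mathbb{Q}$ by the finitely many structure constants, define the intermediate algebra over that subfield, embed it into $\mathbb{C}$ using that $\mathbb{C}$ is algebraically closed with large transcendence degree over $\mathbb{Q}$, and transfer the finite generation condition in both directions via Lemma~\ref{Rem:ext} and Remark~\ref{rem:FE}. The bookkeeping you flag at the end (verifying the Hopf axioms descend, since they are polynomial identities in the structure constants) is precisely the step the paper compresses into ``the argument for the Hopf case is similar,'' so there is nothing missing.
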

\begin{proof}
Here we only treat the case when $A$ is a finite dimensional algebra over $k$ with condition {\rm (\textbf{fg})}. The argument for $A$ being a Hopf algebra with condition {\rm (\textbf{hfg})} is similar. Since $k$ is of characteristic zero, the prime field of $k$ is $\mathbb Q$. Fix a finite basis $x_1,\ldots,x_n$ of $A$. We can write the multiplication in $A$ as $x_ix_j=\sum_\ell \alpha_{ij}^\ell x_\ell$ for some $\alpha_{ij}^\ell\in k$. Take $K=\mathbb Q(\alpha_{ij}^\ell)$ to be the subfield of $k$ by joining all the coefficients $\alpha_{ij}^\ell$ to its prime field $\mathbb Q$. Then one can define another algebra $B$ over $K$ with the bases $x_i$'s and the same multiplication rule $x_ix_j=\sum_\ell \alpha_{ij}^\ell x_\ell$. It is clear that $B\otimes_Kk\cong A$. Now since $\mathbb C$ is algebraically closed over $\mathbb Q$ and has infinitely many transcendental numbers, one can embed $K$ into $\mathbb C$. Let $A':=B\otimes_K\mathbb C$ which is a finite dimensional complex algebra. By Lemma \ref{Rem:ext} and Remark \ref{rem:FE}, $A$ satisfies {\rm (\textbf{fg})} $\Leftrightarrow B$ satisfies {\rm (\textbf{fg})} $\Leftrightarrow A'$ satisfies {\rm (\textbf{fg})}.
\end{proof}

\begin{lemma}\label{lem:field}
Let $R=\bigoplus_{i\ge 0}R_i$ be a connected graded commutative algebra over a base field $K\subset \mathbb C$. If $R\otimes_Kk$ is noetherian for some field extension $k/K$, then $R\otimes_K\mathbb C$ is noetherian. 

Moreover, if $M=\bigoplus_{i\ge 0}M_i$ is a graded module over $R$ such that $M\otimes_Kk$ is finite over $R\otimes_Kk$, then $M\otimes_K\mathbb C$ is finite over $R\otimes_K\mathbb C$.  
\end{lemma}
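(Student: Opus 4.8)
The plan is to prove both statements by a single descent argument: since $K \subseteq \mathbb{C}$ and $k/K$ are both field extensions, I will use the extension $k/K$ to push the relevant finiteness down to the base field $K$, and then base change back up to $\mathbb{C}$. The only tool needed is that extension of scalars along a field extension is exact and faithful, i.e. for any $K$-vector space $V$ one has $\dim_K V = \dim_k (V \otimes_K k)$, so in particular $V \otimes_K k = 0$ forces $V = 0$.

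First I would handle the noetherian claim. Assume $R \otimes_K k$ is noetherian. Since $R$ is commutative and connected graded, $R \otimes_K k$ is graded commutative with degree-zero part $K \otimes_K k = k$, so Proposition~\ref{prop:1}(2) shows it is $k$-affine; fixing finitely many homogeneous algebra generators, these all lie in degrees $\le N$ for some $N$, and consequently each component $R_i \otimes_K k$ is finite dimensional over $k$. Hence each $R_i$ is finite dimensional over $K$, and the $K$-subalgebra $R' \subseteq R$ generated by $\bigoplus_{i \le N} R_i$ (finitely many, finite-dimensional components, recalling $R_0 = K$) is a finitely generated $K$-algebra. Applying the exact functor $-\otimes_K k$, the subalgebra $R' \otimes_K k \subseteq R \otimes_K k$ contains all the chosen generators and hence equals $R \otimes_K k$; therefore $(R/R') \otimes_K k = 0$, so $R = R'$ by faithfulness. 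Thus $R$ is $K$-affine, $R \otimes_K \mathbb{C}$ is $\mathbb{C}$-affine, and Proposition~\ref{prop:1}(2) gives that $R \otimes_K \mathbb{C}$ is noetherian.

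For the module statement I would run the parallel argument. Granting the previous paragraph, $R$ and $R \otimes_K k$ are connected graded with finite-dimensional components, so the finitely generated graded $R \otimes_K k$-module $M \otimes_K k$ also has finite-dimensional graded pieces; choosing finitely many homogeneous module generators in degrees $\le N'$ shows each $M_i$ is finite dimensional over $K$. Let $M' := \sum_{i \le N'} R\, M_i \subseteq M$ be the graded $R$-submodule generated by the components in degrees $\le N'$. Tensoring with $k$, the module $M' \otimes_K k$ is the $R \otimes_K k$-submodule generated by $\bigoplus_{i \le N'}(M_i \otimes_K k)$, which is all of $M \otimes_K k$; hence $(M/M') \otimes_K k = 0$ and $M = M'$ by faithfulness. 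Thus $M$ is generated over $R$ by the finite-dimensional space $\bigoplus_{i \le N'} M_i$, so it is a finite $R$-module, and base changing along $K \subseteq \mathbb{C}$ shows $M \otimes_K \mathbb{C}$ is finite over $R \otimes_K \mathbb{C}$.

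The main obstacle, which is really the only delicate point, is to ensure that ``noetherian (resp.\ finite) after base change to $k$'' genuinely forces the generators into a bounded range of degrees with finite-dimensional components, since only then does the descent $V \otimes_K k = 0 \Rightarrow V = 0$ convert ``generated up to degree $N$'' into honest finite generation. This is exactly where the connected graded hypothesis and Proposition~\ref{prop:1}(2) enter; everything else is the formal exactness and faithfulness of extension of scalars.
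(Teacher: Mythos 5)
Your proposal is correct, but it takes a genuinely different route from the paper's. The paper keeps the chosen homogeneous generators $f_1,\dots,f_r$ of $R\otimes_K k$ and descends only their \emph{coefficients}: it forms the intermediate field $F\subseteq k$ generated over $K$ by the finitely many coefficients of the $f_j$ in a fixed $K$-basis of $R$, identifies the $F$-subalgebra $T$ they generate with all of $R\otimes_K F$ by a degreewise Hilbert-series comparison (base change preserves Hilbert series and $T\subseteq R\otimes_K F\subseteq R\otimes_K k$), and then chooses an embedding $F\hookrightarrow\mathbb{C}$ extending $K\subset\mathbb{C}$ in order to base change $R\otimes_K F$ up to $\mathbb{C}$; the module statement is handled the same way. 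You instead descend all the way to $K$ itself: from Proposition~\ref{prop:1}(2) and connectedness you get homogeneous generators in degrees $\le N$ and local finiteness, you observe that the components $R_i$ with $i\le N$ are finite dimensional and already defined over $K$, and faithful flatness of $K\to k$ (your $(R/R')\otimes_K k=0$ step, which plays exactly the role of the paper's Hilbert-series count) forces $R$ to equal the $K$-affine subalgebra they generate; then you base change along the given inclusion $K\subset\mathbb{C}$, with the parallel argument for $M$. Your choice of descent target buys two things. First, you prove the stronger and cleaner statement that finite generation of connected graded algebras and of graded modules descends along an \emph{arbitrary} field extension. Second, you avoid the embedding $F\hookrightarrow\mathbb{C}$ altogether; that step implicitly requires $\mathbb{C}$ to have enough transcendence degree over $K$ (harmless in the paper's application, where $K$ is a number field, but a genuine restriction for arbitrary subfields $K\subset\mathbb{C}$, e.g.\ $K=\mathbb{R}$), whereas your argument needs no such hypothesis.
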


\begin{proof}
By Proposition \ref{prop:1}(ii), we know $R\otimes_Kk$ is finitely generated over $k$, say by homogenous elements $f_1,\dots,f_r$. Choose a $K$-basis $\{x_i\}$ for $R$. Denote by $F$ the subfield of $k$ by joining all the coefficients appearing in $f_1,\dots,f_r$ as $k$-linear combinations of the basis $\{x_i\}$ to $K$. Then $f_1,\dots,f_r$ belong to $R\otimes_K F$ and they generate a $F$-subalgebra in $R\otimes_K F$, which is denoted by $T$. Note that $R\otimes_Kk$ is locally finite and has a Hilbert series. It is clear that $T\otimes_Fk\cong R\otimes_Kk$. Since base field extension does not change the Hilbert series, $T\subseteq R\otimes_KF\subseteq R\otimes_Kk$ share the same Hilbert series. This implies that $T=R\otimes_K F$ and $R\otimes_K F$ is finitely generated. Now by the fact that $\mathbb C$ is algebraically closed over $K$ and has uncountably many transcendental numbers, we can embed $F$ into $\mathbb C$. Hence $(R\otimes_K F)\otimes_{F}\mathbb C\cong R\otimes_K\mathbb C$. So $R\otimes_K\mathbb C$ is finitely generated and hence is noetherian by Proposition \ref{prop:1}(ii).

Finally for any graded module $M$ over $R$, take finitely many homogenous generators $f_1,\cdots,f_r$ of $M\otimes_Kk$ over $R\otimes_Kk$. Then there is a middle field $K\subseteq F\subseteq k$ such that $f_1,\dots,f_r$ belong to $M\otimes_KF$. By a similar argument to that above, we can consider $T$ as the submodule generated by $f_1,\dots,f_r$ in $M\otimes_KF$ and conclude that $T\otimes_F \mathbb C\cong M\otimes_K \mathbb C $ is finitely generated over $R\otimes_K\mathbb C$ via some embedding of $F$ into $\mathbb C$.  
\end{proof}

\begin{lemma}\label{lem:completion}
Let $R=\bigoplus_{i\ge 0}R_i$ be a graded commutative algebra over a noetherian commutative base ring $R_0$ that is locally finite over $R_0$, and $I$ be any ideal of $R_0$ . If $R\otimes_{R_0}(R_0/I)$ is noetherian, then $R\otimes_{R_0}(\varprojlim\limits_{i}R_0/I^i)$ is noetherian. 

Moreover, let $M=\bigoplus_{i\ge 0}M_i$ be a graded module over $R$ that is locally finite over $R_0$. If $M\otimes_{R_0}(R_0/I)$ is finite over $R\otimes_{R_0}(R_0/I)$, then $M\otimes_{R_0}(\varprojlim\limits_{i}R_0/I^i)$ is finite over $R\otimes_{R_0}(\varprojlim\limits_{i}R_0/I^i)$.
\end{lemma}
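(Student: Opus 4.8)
The plan is to pass to the $I$-adic completion $\widehat{R_0}:=\varprojlim_i R_0/I^i$ and work degreewise. Set $\widetilde R := R\otimes_{R_0}\widehat{R_0}$ and $\widetilde M := M\otimes_{R_0}\widehat{R_0}$. Since tensoring commutes with direct sums, $\widetilde R=\bigoplus_i(R_i\otimes_{R_0}\widehat{R_0})$ is graded with degree-$0$ part $\widetilde R_0=\widehat{R_0}$, and it is again graded commutative. Local finiteness of $R$ over $R_0$ guarantees that each $\widetilde R_i$ is a \emph{finite} $\widehat{R_0}$-module, and likewise for each $\widetilde M_i$. I would rely on three standard facts about $\widehat{R_0}$: it is noetherian because $R_0$ is; one has $\widehat{R_0}/I\widehat{R_0}\cong R_0/I$, which yields $\widetilde R/I\widetilde R\cong R\otimes_{R_0}(R_0/I)$ and $\widetilde M/I\widetilde M\cong M\otimes_{R_0}(R_0/I)$; and $\widehat{R_0}$ is $I\widehat{R_0}$-adically complete and separated, so that $I\widehat{R_0}\subseteq J(\widehat{R_0})$ (for $x\in I\widehat{R_0}$ the element $1-x$ is invertible via the convergent series $\sum x^n$). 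This last point is what lets me invoke Nakayama's lemma below.

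For the algebra statement, by hypothesis $R\otimes_{R_0}(R_0/I)$ is noetherian and graded commutative with noetherian commutative degree-$0$ part $R_0/I$, so by Proposition~\ref{prop:1}(ii) it is $(R_0/I)$-affine. I would choose finitely many homogeneous algebra generators and lift them to homogeneous elements $f_1,\ldots,f_r\in\widetilde R$, then set $S=\widehat{R_0}[f_1,\ldots,f_r]\subseteq\widetilde R$, a graded subalgebra. Since the reductions of the $f_j$ generate $\widetilde R/I\widetilde R$ over $R_0/I$, we get $S+I\widetilde R=\widetilde R$, hence $S_i+I\widetilde R_i=\widetilde R_i$ in each degree $i$. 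Now comes the crucial step: although $\widetilde R$ itself is not finite over $\widehat{R_0}$, each $\widetilde R_i$ is, so $\widetilde R_i/S_i$ is a finite $\widehat{R_0}$-module satisfying $\widetilde R_i/S_i=(I\widehat{R_0})(\widetilde R_i/S_i)$; since $I\widehat{R_0}\subseteq J(\widehat{R_0})$, Nakayama forces $\widetilde R_i=S_i$ for every $i$, so $\widetilde R=S$ is $\widehat{R_0}$-affine. Applying Proposition~\ref{prop:1}(ii) once more (with noetherian commutative degree-$0$ part $\widehat{R_0}$) shows $\widetilde R$ is noetherian.

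For the module statement I would run the same argument one level down. By hypothesis $\widetilde M/I\widetilde M\cong M\otimes_{R_0}(R_0/I)$ is finite over $\widetilde R/I\widetilde R\cong R\otimes_{R_0}(R_0/I)$; I pick homogeneous module generators, lift them to homogeneous $g_1,\ldots,g_s\in\widetilde M$, and set $N=\sum_j\widetilde R\,g_j$. Then $N+I\widetilde M=\widetilde M$, whence $N_i+I\widetilde M_i=\widetilde M_i$ degreewise, and applying Nakayama to the finite $\widehat{R_0}$-modules $\widetilde M_i/N_i$ exactly as above gives $N=\widetilde M$. Thus $\widetilde M$ is finite over $\widetilde R$.

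The point requiring care is precisely that $\widetilde R$ and $\widetilde M$ fail to be finite over $\widehat{R_0}$, so Nakayama cannot be applied globally; the argument must be organized degree by degree, where local finiteness makes each graded piece a finite $\widehat{R_0}$-module and the completeness of $\widehat{R_0}$ (equivalently $I\widehat{R_0}\subseteq J(\widehat{R_0})$) makes Nakayama available. The only other input is lifting finitely many generators from the mod-$I$ reduction, which is legitimate precisely because Proposition~\ref{prop:1}(ii) tells us the reduction is finitely generated to begin with.
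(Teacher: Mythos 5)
Your proof is correct, and it takes a genuinely different route from the paper's. The paper stays at the level of the $I$-adic filtration: it first shows, via a finite presentation and the Mittag--Leffler condition, that $M\otimes_{R_0}\widehat{R_0}\cong\varprojlim_i M/I^iM$ for any finite $R_0$-module $M$, so that by local finiteness $\widehat{R}=R\otimes_{R_0}\widehat{R_0}$ is the \emph{degreewise} $I$-adic completion; it then computes $\gr_I\widehat{R}\cong R\otimes_{R_0}\gr_I\widehat{R_0}$, surjects the polynomial ring $(R\otimes_{R_0}R_0/I)[t_1,\dots,t_n]$ onto this associated graded ring using generators $a_1,\dots,a_n$ of $I$, and concludes that $\gr_I\widehat{R}$, hence $\widehat{R}$, is noetherian (the module statement is handled the same way). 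You instead push all of the completion theory into the base ring, using only the classical facts that $\widehat{R_0}$ is noetherian, that $\widehat{R_0}/I\widehat{R_0}\cong R_0/I$, and that $I\widehat{R_0}\subseteq J(\widehat{R_0})$, and then run a lifting-of-generators argument: finitely many homogeneous generators exist modulo $I$ by Proposition~\ref{prop:1}(ii), and your degreewise Nakayama step --- legitimate precisely because local finiteness makes each $\widetilde{R}_i$ and $\widetilde{M}_i$ a finite $\widehat{R_0}$-module --- shows the lifts generate on the nose. Note that both arguments lean on local finiteness at the decisive moment (the paper to commute completion with the infinite direct sum, you to apply Nakayama in each degree), and both ultimately need $I$ finitely generated, supplied by noetherianness of $R_0$. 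What your route buys: it bypasses the inverse-limit bookkeeping, produces explicit finite generating sets for $\widetilde{R}$ over $\widehat{R_0}$ and for $\widetilde{M}$ over $\widetilde{R}$, and avoids the paper's final inference ``$\gr_I\widehat{R}$ noetherian, hence $\widehat{R}$ noetherian,'' which is somewhat delicate since $\widehat{R}$, an infinite direct sum of complete graded pieces, is not itself $I$-adically complete, so that inference really rests on the degreewise completeness that your argument makes explicit. What the paper's route buys: a concrete presentation of $\gr_I\widehat{R}$ as a quotient of a polynomial ring, and a measure of self-containedness --- your appeal to the noetherianness of $\widehat{R_0}$ is itself standardly proved by exactly the kind of associated-graded argument the paper runs.
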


\begin{proof}
We write $\widehat{R_0}=\varprojlim\limits_{i}R_0/I^i$ and $\widehat{R}=R\otimes_{R_0}\widehat{R_0}$. For any $R_0$-module $M$, we denote the natural map
\[
\xymatrix{
\varphi_N: M\otimes_{R_0} \widehat{R_0}\ar[r]&\varprojlim\limits_{i} M/I^iM.
}
\]
Suppose $M$ is finite over $R_0$. By taking a finite presentation $R_0^m\to R_0^n\to M\to 0$ of $M$, we get the following commutative diagram:
\[
\xymatrix{
 \widehat{R_0}^m\ar[r]\ar[d]^-{\varphi_{R_0^m}} & \widehat{R_0}^n\ar[r]\ar[d]^-{\varphi_{R_0^n}}  & M\otimes_{R_0}  \widehat{R_0}\ar[d]^-{\varphi_M}\ar[r]& 0 \\
 \varprojlim\limits_{i} (R_0/I^i)^m\ar[r] &\varprojlim\limits_{i} (R_0/I^i)^n\ar[r] & \varprojlim\limits_{i} M/I^iM\ar[r]& 0
}  
\]
The first row above is exact since it is obtained by applying $-\otimes_{R_0} \widehat{R_0}$ to the finite presentation of $M$. Moreover, we know ${\varprojlim\limits_{i}}^1 R_0/I^i=0$ for the maps in the inverse sequence 
\[
\xymatrix{
R_0/I & R_0/I^2\ar@{->>}[l] &  R_0/I^3\ar@{->>}[l]  & \ar@{->>}[l] \cdots
}
\]
satisfy the Mittag-Leffler condition. Then by taking the inverse limit of the exact sequence $(R_0/I^i)^r\to (R_0/I^i)^s\to  M/I^iM\to 0$ over $i\ge 1$, we get the second row above is exact. Now since $\varphi_{R_0^r}$ and $\varphi_{R_0^r}$ are isomorphisms, we know $\varphi_M$ is an isomorphism whenever $M$ is finite over $R_0$. As a consequence since $R$ is locally finite over $R_0$, we have 
$$
\widehat{R}~=~\left(\bigoplus_{j\ge 0} R_j\right)\otimes_{R_0}\widehat{R_0}~\cong~\bigoplus_{j\ge 0}\left( R_j\otimes_{R_0}\widehat{R_0}\right)~\cong~\bigoplus_{j\ge 0} \left(\varprojlim\limits_{i} R_j/I^iR_j\right).
$$ 
So $\widehat{R}$ has an $I$-adic filtration such that 
\begin{align*}
\gr_I \widehat{R}~=~\bigoplus_{i\ge 0} \left(I^i \widehat{R}\big/ I^{i+1} \widehat{R}\right)&~\cong~\bigoplus_{j\ge 0}\left(\bigoplus_{i\ge 0} I^iR_j/I^{i+1}R_j\right)~\cong~\bigoplus_{j\ge 0}\left(\bigoplus_{i\ge 0} R_j\otimes_{R_0} I^i/I^{i+1}\right)\\&~\cong~ R\otimes_{R_0}\left(\bigoplus_{i\ge 0} I^i\big/I^{i+1}\right)~\cong~ R\otimes_{R_0} \gr_I \widehat{R_0}.
\end{align*}
Since $R_0$ is noetherian, $I$ is finitely generated, say $I=(a_1,\dots,a_n)$. Thus there is a surjection $(R_0/I)[t_1,\dots,t_n]\twoheadrightarrow \gr_I \widehat{R_0}$ given by $t_i\mapsto a_i$ in $I/I^2$. Then we get a  surjection $(R\otimes_{R_0} R_0/I)[t_1,\dots,t_n]\twoheadrightarrow \gr_I \widehat{R}$. By assumption we know $R\otimes_{R_0}(R_0/I)$ is noetherian. Hence $\gr \widehat{R}$ is noetherian and so is $\widehat{R}$. The statement for finite generation of modules can be proved similarly. 
\end{proof}

\begin{definition}\label{D:Roder}
Let $A$ be a finite dimensional Hopf algebra over $\mathbb C$. We say $A$ can be defined over some algebraic number field $K/\mathbb Q$ if there is some Hopf $K$-subalgebra $B$ of $A$ such that $A\cong B\otimes_K\mathbb C$.  
\end{definition}

\begin{theorem}
\label{thm:m1}
Let $A$ be a finite-dimensional complex Hopf algebra that can be defined over some algebraic number field $K$. Then there exists a finite dimensional Hopf algebra $A'$ over some finite field $F$such that if $A'$ satisfies {\rm (\textbf{hfg})} then so does $A$. 

Moreover, $A$ can be viewed as a deformation of $A'$ in the following way: there exists some localization $\mathcal R$ of $\mathcal O_K$ finitely generated over $\mathbb Z$ and some free Hopf $R$-subalgebra $B$ of $A$ such that $A\cong B\otimes_R\mathbb C$, where we can let $A'=B\otimes_{\mathcal R}F$ with $F=\mathcal R/(p)$ for some prime $p$. 
\end{theorem}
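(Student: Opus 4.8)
The plan is to realize $A$ as the generic fibre of a flat Hopf algebra over a one-dimensional arithmetic base, to compute its cohomology, the mod-$p$ cohomology of $A'$, and the cohomology over the completed local ring from one and the same integral complex, and then to push the finite generation known over $F$ back up to $\C$ by means of Lemma~\ref{lem:completion} and Lemma~\ref{lem:field}. First I would produce the deformation datum. By Definition~\ref{D:Roder} write $A\cong B_K\otimes_K\C$ for a Hopf $K$-subalgebra $B_K$, fix a $K$-basis, and collect the finitely many structure constants of the multiplication, comultiplication, counit, unit, and antipode. Inverting their denominators together with a finite set of primes generating the (finite) class group of $\mathcal{O}_K$ yields a localization $\mathcal{R}$ of $\mathcal{O}_K$ that is a PID finitely generated over $\Z$; the $\mathcal{R}$-span $B$ of the chosen basis is then a free Hopf $\mathcal{R}$-algebra with $B\otimes_\mathcal{R} K=B_K$ and $B\otimes_\mathcal{R}\C\cong A$. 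Choosing a prime $p$ of $\mathcal{R}$ not among those inverted gives a finite residue field $F=\mathcal{R}/(p)$ and the Hopf algebra $A'=B\otimes_\mathcal{R} F$, which already settles the ``Moreover'' clause.

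Next I would set up a single complex computing all three cohomologies. Since $\mathcal{R}$ is a PID and $B$ is $\mathcal{R}$-free and noetherian, there is a resolution $P_\bullet\to\mathcal{R}$ of the trivial module by finitely generated free $B$-modules whose syzygies are $\mathcal{R}$-projective, so base change along any $\mathcal{R}$-algebra preserves exactness. Writing $C^\bullet=\Hom_B(P_\bullet,\mathcal{R})$, a complex of finitely generated free $\mathcal{R}$-modules, and $\mathcal{H}=\coh^*(B,\mathcal{R})=H^\bullet(C^\bullet)$, flatness of $\C$ over $\mathcal{R}$ gives $\coh^*(A,\C)\cong\mathcal{H}\otimes_\mathcal{R}\C$, while the base-changed resolution shows $\coh^*(A',F)\cong H^\bullet(C^\bullet\otimes_\mathcal{R} F)$. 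The universal coefficient theorem over the PID $\mathcal{R}$ relates these by short exact sequences $0\to\mathcal{H}^n\otimes_\mathcal{R} F\to\coh^n(A',F)\to\Tor_1^{\mathcal{R}}(\mathcal{H}^{n+1},F)\to 0$, exhibiting the reduction $\mathcal{H}\otimes_\mathcal{R} F$ as a graded subalgebra of $\coh^*(A',F)$.

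The heart of the argument, and the step I expect to be hardest, is to deduce that $\mathcal{H}\otimes_\mathcal{R} F$ is noetherian from the hypothesis that $\coh^*(A',F)$ is noetherian, since the $\Tor$ (Bockstein) terms mean the two algebras need not coincide. The plan is to show that $\coh^*(A',F)$ is integral over $\mathcal{H}\otimes_\mathcal{R} F$. Letting $\beta$ be the integral Bockstein attached to $0\to\mathcal{R}\xrightarrow{p}\mathcal{R}\to F\to 0$, whose kernel is exactly the image $\mathcal{H}\otimes_\mathcal{R} F$ of reduction, I would use that $\beta(x)$ lands in the $p$-power torsion of the finitely generated $\mathcal{R}$-module $\coh^{n+1}(B,\mathcal{R})$ and that the mod-$p$ Bockstein is a derivation annihilating $p$-th powers; a Bockstein spectral sequence argument, in the spirit of the Frobenius construction in Lemma~\ref{lem:key}, then places a suitable $\chara(F)$-power of any homogeneous element in $\mathcal{H}\otimes_\mathcal{R} F$. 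As $\coh^*(A',F)$ is a finitely generated graded-commutative algebra, integrality upgrades to module-finiteness, and Eakin--Nagata forces $\mathcal{H}\otimes_\mathcal{R} F$ to be noetherian. Lemma~\ref{lem:completion} (with $R_0=\mathcal{R}$ and $I=(p)$) then makes $\mathcal{H}\otimes_\mathcal{R}\widehat{\mathcal{R}}$ noetherian; localizing gives $\mathcal{H}\otimes_\mathcal{R}\widehat{K}$ noetherian for the fraction field $\widehat{K}$ of the completion; and Lemma~\ref{lem:field}, applied to the connected graded $K$-algebra $\mathcal{H}\otimes_\mathcal{R} K$ with the field extension $\widehat{K}/K$, yields that $\coh^*(A,\C)\cong\mathcal{H}\otimes_\mathcal{R}\C$ is noetherian, hence finitely generated by Proposition~\ref{prop:1}.

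Finally, for the module half of \textbf{(hfg)} I would verify condition (vi) of Proposition~\ref{equivfg*}. Given a finite-dimensional $A$-module $M$, spread it out over a finitely generated $\mathcal{R}$-subalgebra $\mathcal{S}\subset\C$ on which the $B$-action is defined, obtaining a $B\otimes_\mathcal{R}\mathcal{S}$-lattice $\mathcal{M}$ with $\mathcal{M}\otimes_\mathcal{S}\C=M$; choose a maximal ideal of $\mathcal{S}$ over $(p)$ with finite residue field $F'$. Since $A'$ satisfies \textbf{(hfg)}, Lemma~\ref{Rem:ext} shows $A'\otimes_F F'$ does as well, so $\coh^*(A'\otimes_F F',\mathcal{M}\otimes F')$ is finitely generated over $\coh^*(A'\otimes_F F',F')$. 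Running the same integrality, completion, and field-extension chain now over the base $\mathcal{S}$, via Lemma~\ref{lem:completion} and Lemma~\ref{lem:field} with base field $\mathrm{Frac}(\mathcal{S})\subset\C$, transports this finiteness back to show $\coh^*(A,M)$ is finitely generated over $\coh^*(A,\C)$, completing the verification of \textbf{(hfg)} for $A$. A secondary obstacle here is that $\mathcal{S}$ may have dimension greater than one, but only the flatness and completion inputs of Lemmas~\ref{lem:completion} and~\ref{lem:field} are needed, and these are stated for an arbitrary noetherian base.
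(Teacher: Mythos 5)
Your first paragraph (the deformation datum: structure constants, localization $\mathcal R$ of $\mathcal O_K$, free Hopf $\mathcal R$-form $B$, $A'=B\otimes_{\mathcal R}F$) and your endgame (Lemma~\ref{lem:completion} with $I=(p)$, then localizing to ${\rm Frac}\,\widehat{\mathcal R}$, then Lemma~\ref{lem:field}, then flat base change to $\C$ and Proposition~\ref{equivfg*}) match the paper's proof. But the central step — deducing that $\coh^*(B,\mathcal R)\otimes_{\mathcal R}F$ is noetherian from the hypothesis on $\coh^*(A',F)$ — is where your route diverges from the paper's, and it has a genuine gap. The universal coefficient sequence exhibits $\mathcal H\otimes_{\mathcal R}F$ as a \emph{subalgebra} of $\coh^*(A',F)$, and noetherianity does not descend to subrings, so you are forced into the integrality claim. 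That claim is not established by what you wrote. Killing the mod-$p$ Bockstein of $x^{\ell}$ (with $\ell=\chara F$) only shows $\beta(x^{\ell})\in p\,\coh^{*}(B,\mathcal R)$; it does not place $x^{\ell}$ in $\ker\beta=\mathrm{im}(\mathcal H\otimes_{\mathcal R}F)$. Iterating via the Bockstein spectral sequence makes $\beta(x^{\ell^t})$ survive to later pages, but membership in the image of reduction requires the \emph{integral} Bockstein to vanish on the nose, and the obstruction at stage $t$ lives in the $p$-power torsion of $\coh^{n\ell^t+1}(B,\mathcal R)$, a degree that grows with $t$. There is no a priori uniform bound on these torsion exponents across degrees — such a bound is essentially equivalent to the noetherianity you are trying to prove — so the climbing argument need not terminate, and with it the integrality, the module-finiteness, and the Eakin--Nagata application all remain unproven. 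The paper avoids this entirely by exploiting the multiplicative structure of the collapsing spectral sequence $E_2^{pq}=\Tor_q^{\mathcal R}(\coh^p(C^{\bu}(B,\mathcal R)),F)\Rightarrow\coh^{p+q}(A',F)$: the $\Tor_{\ge 1}$ terms form a square-zero ideal of the $E_2=E_\infty$ page, so $\mathcal H\otimes_{\mathcal R}F$ is a \emph{quotient} of (the associated graded of) the noetherian ring $\coh^*(A',F)$, and noetherianity passes to quotients trivially — no integrality, no Bockstein analysis, no torsion bounds.

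The module half also deviates in a way that creates problems the paper's proof is designed to dodge. You spread an arbitrary finite $A$-module $M$ out over a finitely generated $\mathcal R$-algebra $\mathcal S\subset\C$ and propose to rerun the chain over $\mathcal S$; but $\mathcal S$ is generally of Krull dimension greater than one, so it is not hereditary, the two-term universal coefficient sequence and the prime-element Bockstein formalism both fail, and Lemma~\ref{lem:field} is stated for a base \emph{field} $K\subset\C$ — your closing remark that ``only flatness and completion inputs are needed'' does not repair the Tor terms of higher homological degree over $\mathcal S$. The paper never needs arbitrary $M$: after enlarging $K$ (harmless by Lemma~\ref{Rem:ext}) so that the Jacobson radical splits, it takes the single $\mathcal R$-lattice $W$ with $W\otimes_{\mathcal R}\C\cong J_A$, runs the same one-variable argument for the module $\coh^*(C^{\bu}(B,W))$ over $\coh^*(C^{\bu}(B,\mathcal R))$, and concludes via the criterion of Proposition~\ref{equivfg*} that uses only the radical as test module. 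Adopting that reduction, and replacing your Bockstein-integrality step by the quotient argument from the collapsed $E_2$ page, would bring your outline in line with a complete proof.
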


\begin{proof}
Since $A$ can be defined over $K$, there exists some Hopf $K$-subalgebra, denoted by $L$, such that $L\otimes_K\mathbb C\cong A$. In view of Lemma \ref{Rem:ext}, by replacing $K$ with a possible finite field extension, we may assume the Jacobson radical $J_L$ of $L$ splits in $L$. Thus we can choose a finite basis $x_1,\ldots,x_n$ for $L$ such that $J_L={\rm span}_K(x_1,\ldots,x_m)$ and $J_L\otimes_K\mathbb C\cong J_A$ the Jacobson radical of $A$. Denote by $\mathcal S$ the subset of $K$ which consists of all the coefficients when we apply (co)multiplication, (co)unit, and antipode of $L$ to the above fixed basis. Further denote by $\mathcal R$ the localization of $\mathcal O_K$ at the multiplicative set generated by all the denominators appeared in $\mathcal S$. Since $\dim_K L<\infty$, $\mathcal S$ is finite and $\mathcal R$ is a finitely generated $\mathbb Z$-algebra. As a consequence, there is some prime $p$ such that $F=\mathcal R/(p)$ is a finite field. Set $B:={\rm span}_\mathcal R(x_1,\ldots,x_n)$. Since $\mathcal S\subset \mathcal R$, one can check that $B$ is a free Hopf $\mathcal R$-subalgebra of $A$ satisfying 
$$B\otimes_\mathcal R\mathbb C~\cong~(B\otimes_\mathcal RK)\otimes_K\mathbb C~\cong~L\otimes_K\mathbb C~\cong~A.$$
Moreover, $W:={\rm span}_\mathcal R(x_1,\ldots,x_m)$ is a finite $B$-module satisfying 
$$W\otimes_\mathcal R\mathbb C~\cong~(W\otimes_\mathcal RK)\otimes_K\mathbb C~\cong ~J_L\otimes_K\mathbb C~\cong~J_A.$$

We use the language of differential graded algebras and modules to describe the cohomology ring $\coh^*(B, \mathcal R)$ of $B$ and the cohomology $\coh^*(B,W)$ of $B$ with coefficients in $W$. We apply $-\otimes_B\mathcal R$ to the bar resolution of $B$ over $\mathcal R$,
\[
\xymatrix{
\cdots\ar[r]^-{\partial_3}&B\otimes_{\mathcal R} B\otimes_{\mathcal R} B\otimes_{\mathcal R} B\ar[r]^-{\partial_2}& B\otimes_{\mathcal R} B \otimes_{\mathcal R} B \ar[r]^-{\partial_1} & B\otimes_{\mathcal R} B \ar[r]^-{\mu}& B,
}
\]
where $\partial_i(b_0\otimes \cdots \otimes b_{i+1})=\sum_{j=0}^i(-1)^jb_0\otimes \cdots \otimes b_jb_{j+1}\otimes \cdots \otimes b_{i+1}$. Since $B$ is free over $\mathcal R$, we get a projective resolution of $R$ in the category of left $B$-modules. Therefore, we obtain two complexes by applying $\Hom_B(-,\mathcal R)$ and $\Hom_B(-,W)$ to this resolution, namely   
\begin{align}\label{E:complexB}
C^n(B,\mathcal R)~:=~\Hom_{\mathcal R}(B^{\otimes n},\mathcal R)\ \text{and}\ C^n(B,W)~:=~\Hom_{\mathcal R}(B^{\otimes n}, W),
\end{align}
where we omit the formulas of the corresponding differentials. Note that $C^*(B,\mathcal R)$ is a differential graded algebra with the cup product given by, for any $f\in C^m(B,\mathcal R)$ and $g\in C^n(B,\mathcal R)$,
\begin{align}\label{E:dgm}
f\smile g(b_1\otimes \cdots b_{m+n})~=~f(b_1\otimes \cdots b_m)g(b_{m+1}\otimes \cdots b_{m+n}).
\end{align}
Moreover, $C^\bullet(B,W)$ is differential graded module over $C^\bullet(B,\mathcal R)$ with the module action similar to \eqref{E:dgm} by considering $g\in C^n(B, W)$. After taking the cohomology of the two complexes \eqref{E:complexB}, we obtain $\coh^*(B,\mathcal R)$ and $\coh^*(B,W)$, where the Yoneda product is compatible with the cup product. Since $B$ is a free module over $\mathcal R$ of finite rank, it is projective and so is $B^{\otimes n}$. Hence we have
\begin{align*}
C^n(B,\mathcal R)\otimes_{\mathcal R}F&\, =\Hom_{\mathcal R}(B^{\otimes n},\mathcal R)\otimes_{\mathcal R} F\cong \Hom_F((B^{\otimes n})\otimes_{\mathcal R}F,F)\\
&\,\cong \Hom_F((B\otimes_{\mathcal R}F)^{\otimes n},F)=\Hom_F(A'^{\otimes n},F)=:C^n(A',F),
\end{align*}
which calculates the cohomology ring $\coh^*(A',F)$. Likewise, $C^n(B,W)\otimes_{\mathcal R}F=\Hom_F(A'^{\otimes n},W')=:C^n(A',W')$ with $W'=W\otimes_{\mathcal R}F$ a finite module over $A'$. By the hypothesis that $A'$ satisfies {\rm (\textbf{hfg})}, we know $\coh^*(C^\bullet(A',W'))$ is noetherian over $\coh^*(C^\bullet(A',F))$.

Now as a localization of $\mathcal O_K$, $\mathcal R$ is a Dedekind domain and hence is hereditary. So we have a projective resolution $0\to (p)\to \mathcal R\to F\to 0$ in the category of left $\mathcal R$-modules. After tensoring the resolution with the complex $C^{\bu}(B,\mathcal R)$, we get a double complex which yields a spectral sequence 
$$
E_2^{pq}:=\Tor^{\mathcal R}_q\left(\coh^p(C^{\bu}(B,\mathcal R),F)\right)~\Rightarrow~E_\infty^{pq}:=\coh^{p+q}\left(C^{\bu}(B,\mathcal R)\otimes_{\mathcal R}F\right)\cong \coh^{p+q}(A',F),
$$
which collapses at the $E_2$ page. 

Next note that the double complex $C^\bullet(B,\mathcal R)$ has a multiplicative structure that induces a
multiplicative structure on the spectral sequence. Deduced by the filtration on the double complex, we get that $E_2^{p(q\ge 1)}=\Tor^{\mathcal R}_{\ge 1}(\coh^p(C^{\bu}(B,\mathcal R), F)$ is an ideal in page $E_2$. That is, $E_2^{p0}=\coh^{p}(C^{\bu}(B,\mathcal R))\ot_{\mathcal R} F$ is a quotient
of the $E_2=E_\infty$ page. Then $\coh^{*}(C^{\bu}(B,\mathcal R))\ot_{\mathcal R} F$ is a quotient of the noetherian ring $\coh^*(A',F)$, and as a consequence is noetherian itself. By a similar argument, one can show that $\coh^{*}(C^{\bu}(B,W))\ot_{\mathcal R} F$ is a finite module over $\coh^{*}(C^{\bu}(B,\mathcal R))\ot_{\mathcal R} F$. By Lemma \ref{lem:completion}, 
$$\coh^{*}(C^{\bu}(B,\mathcal R))\otimes_{\mathcal R} \left(\varprojlim\limits_i \mathcal R/(p^i)\right)~=~\coh^{*}(C^{\bu}(B,\mathcal R))\otimes_{\mathcal R} \widehat{\mathcal R}
$$
is noetherian. Note that $\coh^{*}(C^{\bu}(B,\mathcal R))\otimes_{\mathcal R} \widehat{\mathcal R}$ is graded commutative and finitely generated. So $\coh^{*}(C^{\bu}(B,\mathcal R))\otimes_{\mathcal R} {\rm Frac}\, \widehat{\mathcal R}$ is noetherian and finitely generated. Similarly, one argues that, since $\coh^{*}(C^{\bu}(B,W))\otimes_{\mathcal R}\mathcal R/(p)$ is finite over $\coh^{*}(C^{\bu}(B,\mathcal R))\otimes_{\mathcal R}\mathcal R/(p)$, one gets that $\coh^{*}(C^{\bu}(B,W))\otimes_{\mathcal R}\widehat{\mathcal R}$ is finite over $\coh^{*}(C^{\bu}(B,\mathcal R))\otimes_{\mathcal R} \widehat{\mathcal R}$. As a consequence, $\coh^{*}(C^{\bu}(B,W))\otimes_{\mathcal R}{\rm Frac}\, \widehat{\mathcal R}$ is finite over $\coh^{*}(C^{\bu}(B,\mathcal R))\otimes_{\mathcal R}{\rm Frac}\, \widehat{\mathcal R}$.

Note that ${\rm Frac}\,\mathcal R={\rm Frac}\,\mathcal O_K=K$ and write ${\rm Frac}\,\widehat{\mathcal R}=k$. Let $T:=\coh^{*}(C^{\bu}(B,\mathcal R))\otimes_{\mathcal R}K $ and $M:=\coh^{*}(C^{\bu}(B,W))\otimes_{\mathcal R}K$. By previous discussion, we have $T\otimes_Kk$ is noetherian and $M\otimes_Kk$ is finite over $T\otimes_Kk$. Note that $K\subset \mathbb C$ is a subfield. By Lemma \ref{lem:field}, one sees that $T\otimes_K\mathbb C$ is noetherian and $M\otimes_K\mathbb C$ is finite over $T\otimes_K \mathbb C$. Finally, since $\mathbb C$ is flat over $K={\rm Frac}\, \mathcal R$ which is also flat over $\mathcal R$, we have 
$$T\otimes_{\mathcal R} \mathbb C~=~\coh^*\left(C^{\bu}(B,\mathcal R)\right)\otimes_{\mathcal R}\mathbb C~\cong~\coh^*\left(C^{\bu}(B,\mathcal R)\otimes_{\mathcal R}\mathbb C\right)~\cong~\coh^*\left(C^{\bu}(A,\mathbb C)\right)~\cong~\coh^*(A,\mathbb C)$$
and
$$M\otimes_{\mathcal R} \mathbb C~=~\coh^*\left(C^{\bu}(B,W)\right)\otimes_{\mathcal R}\mathbb C~\cong~\coh^*\left(C^{\bu}(B,W)\otimes_{\mathcal R}\mathbb C\right)~\cong~\coh^*\left(C^{\bu}(A,J_A)\right)~\cong ~\coh^*(A,J_A).$$
Then we conclude that $A$ satisfies {\rm \textbf{(hfg)}} by Proposition \ref{equivfg*}. 
\end{proof}

The following result can be proved analogously. 

\begin{theorem}\label{thm:m2}
Let $A$ be a finite-dimensional complex algebra that can be defined over some algebraic number field. Then there exists a finite dimensional algebra $A'$ over some finite field, where $A$ can be viewed as some deformation of $A'$, such that if $A'$ satisfies {\rm (\textbf{fg})}, then so does $A$.
\end{theorem}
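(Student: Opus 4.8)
The plan is to follow the proof of Theorem~\ref{thm:m1} essentially line by line, replacing the one-sided cohomology $\coh^*(-,-)$ by the two-sided Hochschild cohomology $\HH^*(-,-)$, the condition {\rm (\textbf{hfg})} by {\rm (\textbf{fg})}, and the criterion Proposition~\ref{equivfg*} by Proposition~\ref{equivfg}. By Proposition~\ref{equivfg}(ii) it suffices to exhibit a finite $A$-bimodule $N_A$ such that $\HH^*(A)$ is a finitely generated (equivalently, since it is graded commutative, noetherian) algebra and $\HH^*(A,N_A)$ is finitely generated over it, where $N_A=\Hom_{\mathbb C}(A/{\rm rad}(A),A/{\rm rad}(A))$: indeed, by Proposition~\ref{prop:fg}(i) one has $\HH^*(A,N_A)\cong\Ext^*_A(A/{\rm rad}(A),A/{\rm rad}(A))$ with matching $\HH^*(A)$-action, so this is precisely condition (ii) of Proposition~\ref{equivfg}.

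First I would build the arithmetic model exactly as in Theorem~\ref{thm:m1}. Using Definition~\ref{D:Roder}, write $A\cong L\otimes_K\mathbb C$ for a $K$-form $L$; after enlarging $K$ by a finite extension (permitted by Remark~\ref{rem:FE}, the associative analogue of Lemma~\ref{Rem:ext}) I may assume the Jacobson radical $J_L$ splits off, and I fix a basis $x_1,\dots,x_n$ of $L$ with $J_L={\rm span}_K(x_1,\dots,x_m)$ and $J_L\otimes_K\mathbb C\cong J_A={\rm rad}(A)$. Collecting the structure constants of the \emph{multiplication} of $L$ and localizing $\mathcal O_K$ at their denominators yields a finitely generated $\mathbb Z$-algebra $\mathcal R$, a prime $p$ with $F=\mathcal R/(p)$ finite, a free $\mathcal R$-algebra $B={\rm span}_{\mathcal R}(x_1,\dots,x_n)$ with $B\otimes_{\mathcal R}\mathbb C\cong A$, and $A'=B\otimes_{\mathcal R}F$. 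Enlarging $\mathcal R$ if necessary, $J_B={\rm span}_{\mathcal R}(x_1,\dots,x_m)$ is a two-sided ideal of $B$ with $B/J_B$ free over $\mathcal R$, so the finite free $\mathcal R$-module $N_B=\Hom_{\mathcal R}(B/J_B,B/J_B)$ is a $B$-bimodule. Since each $B^{\otimes j}$ and $N_B$ are finite free over $\mathcal R$, base change commutes with the relevant $\Hom$'s, giving $N_B\otimes_{\mathcal R}\mathbb C\cong N_A$ (using $(B/J_B)\otimes_{\mathcal R}\mathbb C\cong A/{\rm rad}(A)$ from the radical splitting) and $N_B\otimes_{\mathcal R}F\cong\Hom_F(A'/J',A'/J')$ with $J'=J_B\otimes_{\mathcal R}F$.

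The homological machinery is then identical. I would apply $\Hom_{B^e}(-,B)$ and $\Hom_{B^e}(-,N_B)$ to the $\mathcal R$-linear bar resolution of $B$ as a $B$-bimodule, obtaining the differential graded algebra $C^{\bu}(B,B)$, with $C^n(B,B)=\Hom_{\mathcal R}(B^{\otimes n},B)$ under cup product, and the differential graded module $C^{\bu}(B,N_B)$ over it, whose cohomologies are $\HH^*(B)$ and $\HH^*(B,N_B)$. As each $B^{\otimes n}$ is projective over $\mathcal R$, tensoring with $F$ commutes with $\Hom_{\mathcal R}(B^{\otimes n},-)$, so $C^{\bu}(B,B)\otimes_{\mathcal R}F$ and $C^{\bu}(B,N_B)\otimes_{\mathcal R}F$ compute $\HH^*(A')$ and $\HH^*(A',N_B\otimes_{\mathcal R}F)$. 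Because $\mathcal R$ is a localization of $\mathcal O_K$, hence a Dedekind (so hereditary) domain, the resolution $0\to(p)\to\mathcal R\to F\to 0$ produces a multiplicative spectral sequence $E_2^{p,q}=\Tor^{\mathcal R}_q(\HH^p(B),F)\Rightarrow\HH^{p+q}(A')$ that collapses at $E_2$; the induced filtration exhibits $\HH^*(B)\otimes_{\mathcal R}F$ as a quotient algebra of $\HH^*(A')$ and $\HH^*(B,N_B)\otimes_{\mathcal R}F$ as a finite module over it. Under the hypothesis that $A'$ satisfies {\rm (\textbf{fg})}, Proposition~\ref{equivfg}(iii) together with Proposition~\ref{prop:fg}(i) gives that $\HH^*(A')$ is noetherian and $\HH^*(A',N_B\otimes_{\mathcal R}F)\cong\Ext^*_{A'}(A'/J',A'/J')$ is finitely generated over it; hence $\HH^*(B)\otimes_{\mathcal R}F$ is noetherian and $\HH^*(B,N_B)\otimes_{\mathcal R}F$ is finite over it. Lemma~\ref{lem:completion} then passes to the completion $\widehat{\mathcal R}$; graded commutativity of Hochschild cohomology lets me pass to ${\rm Frac}\,\widehat{\mathcal R}$; Lemma~\ref{lem:field} transports noetherianity and finiteness across ${\rm Frac}\,\widehat{\mathcal R}\supseteq K$ to $\mathbb C$; and finally flatness of $\mathbb C$ over $\mathcal R$ identifies $\HH^*(B)\otimes_{\mathcal R}\mathbb C\cong\HH^*(A)$ and $\HH^*(B,N_B)\otimes_{\mathcal R}\mathbb C\cong\HH^*(A,N_A)$. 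By Proposition~\ref{equivfg}(ii), $A$ satisfies {\rm (\textbf{fg})}.

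The main obstacle is the bimodule bookkeeping in the arithmetic model: unlike the Hopf case, where one works with the left module $J_A$, here I must produce an $\mathcal R$-form of a \emph{bimodule} together with the cup-product differential graded algebra structure and the multiplicative structure on the $\Tor$ spectral sequence, and then verify that all base changes to $F$ and to $\mathbb C$ respect these structures. The one genuinely new point worth emphasizing is that \emph{no ``good prime'' is required}: I never need $A'/J'$ to be semisimple (that is, $J'$ need not equal ${\rm rad}(A')$), because condition {\rm (\textbf{fg})} for $A'$ already supplies finite generation of $\Ext^*_{A'}(M,N)$ for \emph{all} finite $A'$-modules via Proposition~\ref{equivfg}(iii); it is only over $\mathbb C$ that $J_B$ reduces to the genuine radical, and that is exactly what is needed to invoke Proposition~\ref{equivfg}(ii) for $A$.
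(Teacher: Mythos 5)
Your proposal is correct and is essentially the paper's own proof: the paper establishes Theorem \ref{thm:m2} in one line as the argument ``analogous'' to Theorem \ref{thm:m1}, and your translation --- the bimodule $N_B=\Hom_{\mathcal R}(B/J_B,B/J_B)$ with $\HH^*(A,N_A)\cong\Ext^*_A(A/\mathrm{rad}(A),A/\mathrm{rad}(A))$ via Proposition \ref{prop:fg}(i), the bar-resolution differential graded algebra over $\mathcal R$, the collapsing $\Tor^{\mathcal R}$ spectral sequence, Lemmas \ref{lem:completion} and \ref{lem:field}, and the criterion of Proposition \ref{equivfg} --- is exactly that intended analogy, including the correct observation that {\rm \textbf{(fg)}} for $A'$ applies to arbitrary finite bimodules, so $J'$ need not equal $\mathrm{rad}(A')$. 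The only point worth noting is that Lemma \ref{lem:field} is stated for \emph{connected} graded algebras whereas $\HH^0$ is the (finite dimensional commutative) center, but its proof uses only local finiteness of the grading and so applies after this harmless relaxation, just as the paper's one-line proof implicitly assumes.
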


\begin{remark}
A similar proof works for finite dimensional augmented algebras that can be defined over some algebraic number field. Moreover, we can always assume the resulting (Hopf or augmented) algebra after the reduction modulo $p$ is over an algebraically closed field of characteristic $p$ by a field extension in view of Lemma \ref{Rem:ext} and Remark \ref{rem:FE}. \\
\end{remark}

\noindent
\underline{{\bf Applications.}} We provide here some applications for the finite generation conditions when the resulting Hopf algebra via reduction modulo $p$ is the smash product of a quantum complete intersection with a semisimple Hopf algebra. In particular, our result is applicable for finite dimensional pointed Hopf algebras of diagonal type over the complex numbers.

\begin{lemma}\label{lem:SP}
Let $R$ be a finite dimensional (resp.\ augmented) $k$-algebra satisfying {\rm \textbf{(fg)}} (resp.\ {\rm \textbf{(hfg)}}), and $H$ a semisimple Hopf algebra over $k$. Suppose there is an $H$-action on $R$ (resp.\ preserving the augmentation of $R$).  Then the smash product $R \# H$ satisfies {\rm \textbf{(fg)}} (resp.\ {\rm \textbf{(hfg)}}).
\end{lemma}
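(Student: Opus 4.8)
The plan is to use the semisimplicity of $H$ to degenerate the relevant Lyndon--Hochschild--Serre spectral sequences and then deduce finite generation of the surviving invariant rings from the integral property of $H$. Because $H$ is semisimple, the trivial module $k$ is projective over $H$, so $\coh^p(H,-)=0$ for every $p>0$ and $\coh^0(H,-)=(-)^H$. Hence each of the spectral sequences associated with the smash product $R\#H$ and described in the Appendix has $E_2^{p,q}=0$ for $p\neq 0$; it collapses at $E_2$ along the single row $p=0$, and the edge isomorphism gives, compatibly with the cup products and module actions,
\begin{align*}
\HH^*(R\#H,M)~&\cong~\HH^*(R,M)^H, \\
\coh^*(R\#H,M)~&\cong~\coh^*(R,M)^H .
\end{align*}
In particular $\HH^*(R\#H)\cong\HH^*(R,R\#H)^H$ in the (fg) case and $\coh^*(R\#H,k)\cong\coh^*(R,k)^H$ in the (hfg) case, so everything is reduced to the finite generation of $H$-invariants of these (Hochschild) cohomology rings and of the modules $\HH^*(R,M)^H$, $\coh^*(R,M)^H$.

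I would establish this finite generation by descending to a commutative graded central subalgebra, where the integral property of the semisimple $H$ (Proposition~\ref{prop:integral}(ii)) applies. Take the (fg) case first. Since $R$ satisfies (fg) and $R\#H$ is a finite-dimensional unital algebra in ${\rm Mod}(R^e)$, Proposition~\ref{prop:fg}(ii) shows $S:=\HH^*(R,R\#H)$ is noetherian and module-finite over its affine graded center $Z$. The $H$-action on $S$ is by graded algebra automorphisms, hence preserves $Z$, so $Z$ is an affine commutative graded $H$-module algebra; by the integral property $Z$ is integral, and therefore (being affine) finite, over its invariant subring $Z^H$, which is again affine. Thus $S$ is finite over the noetherian ring $Z^H$. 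Now $Z^H\subseteq S^H$, so $S^H=\HH^*(R\#H)$ is a $Z^H$-submodule of $S$ and hence finite over $Z^H$ --- a submodule of a finite module over a noetherian ring --- and in particular affine. The same inclusions make each $\HH^*(R,M)$ finite over $Z^H$, so its invariant submodule $\HH^*(R\#H,M)=\HH^*(R,M)^H$ is finite over $Z^H$ and a fortiori over $\HH^*(R\#H)$, giving (fg). The (hfg) case runs verbatim once $\coh^*(R,k)$ is known to be module-finite over an $H$-stable affine graded central subalgebra $Z$ (automatic in the intended applications, where $R$ is a quantum complete intersection and $\coh^*(R,k)$ is graded commutative, so $Z$ may be taken to be its even part); one then concludes via Proposition~\ref{equivfg*} and Remark~\ref{rem:hfg}(b).

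The step requiring the most care --- and the only real obstacle --- is the passage to invariants, since the cohomology ring $S$ is not graded commutative in general but merely finite over its graded center. One must resist the temptation to average: for a general semisimple $H$ the Reynolds map $\lambda\cdot(-)$ attached to a normalized integral $\lambda$ is \emph{not} $S^H$-linear, so $S^H$ need not split off $S$ as a module and the usual trace argument is unavailable. The argument above avoids averaging entirely; it uses only that $S^H$ is a $Z^H$-\emph{submodule} of $S$, together with the finiteness of $S$ over the noetherian ring $Z^H$ produced by applying the integral property to the \emph{commutative} algebra $Z$. The remaining points to verify are routine: that the graded center $Z$ is $H$-stable, that the collapse isomorphisms respect the ring and module structures from the Appendix, and --- in the (hfg) case --- that $\coh^*(R,k)$ is indeed finite over an $H$-stable affine graded central subalgebra.
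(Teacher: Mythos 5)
Your opening step matches the paper: semisimplicity gives $\coh^{>0}(H,-)=0$, the LHS spectral sequences of the Appendix collapse onto the row $p=0$, and the multiplicative edge isomorphisms identify $\coh^*(R\#H,M)\cong\coh^*(R,M)^H$ and $\HH^*(R\#H,M)\cong\HH^*(R,M)^H$. Your proof breaks at the invariants. You claim that ``the $H$-action on $S$ is by graded algebra automorphisms, hence preserves $Z$''; this is false for a general semisimple $H$. A module-algebra structure only makes the multiplication $H$-equivariant; individual elements of $H$ act as automorphisms only when they are grouplike, and the (graded) center of an $H$-module algebra need not be an $H$-submodule. Concretely, an action of $H=(kG)^{*}$ is the same thing as a $G$-grading, and for nonabelian $G$ the center of a $G$-graded algebra need not be a graded subalgebra, i.e., need not be $H$-stable. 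The paper is sensitive to exactly this point: in the proof of Theorem~\ref{thm:FiniteTypeHochschild} it takes an explicit computation, valid only because $H$ is there assumed cocommutative, to show that the central image of $\HH^*(R)$ in $\HH^*(R,A)$ is an $H$-submodule. In Lemma~\ref{lem:SP}, however, $H$ is an arbitrary semisimple Hopf algebra --- in the paper's intended application it is a Drinfeld double $\mathcal{D}(G)$, which is not cocommutative --- so the ring $Z^H$ on which your whole descent rests is not available. In addition, your (hfg) case assumes that $\coh^*(R,k)$ is finite over an $H$-stable affine graded central subalgebra, which you concede holds only ``in the intended applications''; the lemma is stated for every augmented $R$ satisfying {\rm \textbf{(hfg)}}, where $\coh^*(R,k)$ need not even be graded commutative.

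Moreover, the reason you give for rejecting the averaging argument is factually wrong, and averaging is precisely how the paper proves the lemma. Since $H$ is semisimple it has a two-sided integral $\lambda$ with $\varepsilon(\lambda)=1$, and the Reynolds operator $t(s)=\lambda\cdot s$ \emph{is} an $S^H$-bimodule projection of $S$ onto $S^H$: for $a,b\in S^H$ one has $h\cdot a=\varepsilon(h)a$ for all $h$, so $\lambda\cdot(asb)=\sum(\lambda_1\cdot a)(\lambda_2\cdot s)(\lambda_3\cdot b)=a(\lambda\cdot s)b$. This splitting is the mechanism behind the two facts from \cite{MO93} that the paper invokes: if $S$ is noetherian then so is $S^H$ \cite[Corollary 4.3.5]{MO93}, and $S$ is a finite module over $S^H$ \cite[Theorem 4.4.2]{MO93}. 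Granting these, the paper argues directly on the possibly noncommutative ring $\coh^*(R,k)$, with no passage to the graded center and no use of the integral property (Proposition~\ref{prop:integral} never enters): $\coh^*(R\#H,k)\cong\coh^*(R,k)^H$ is noetherian; for finite $M$ the module $\coh^*(R,M)$ is finite over $\coh^*(R,k)$ by {\rm \textbf{(hfg)}}, hence over the noetherian ring $\coh^*(R,k)^H$ since $\coh^*(R,k)$ is finite over it; and $\coh^*(R\#H,M)\cong\coh^*(R,M)^H$ is a $\coh^*(R,k)^H$-submodule (using the $\coh^*(R,k)\#H$-module structure of Proposition~\ref{prop:dgmod-ss-coh}), hence finite over $\coh^*(R\#H,k)$. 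The Hochschild case is parallel. So the obstacle you singled out as ``the only real obstacle'' is illusory, while the step you treated as routine --- $H$-stability of $Z$ --- is where your proof genuinely fails.
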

\begin{proof}
Here we prove the augmented case, and the argument for the algebra case is similar. Since $H$ is semisimple, we know $\coh^*(R\#H,k)\cong \coh^*(R,k)^H$. By hypothesis, $\coh^*(R,k)$ is finitely generated and noetherian, and so is its invariant ring $\coh^*(R,k)^H$ by \cite[Corollary 4.3.5]{MO93}. Let $M$ be a finite $R\# H$-module. Since $M$ is also finite over $R$ by the restriction, we have $\coh^*(R,M)$ is noetherian over $\coh^*(R,k)$ by hypothesis. Thus the submodule $\coh^*(R\#H,M)\cong \coh^*(R,M)^H$ of $\coh^*(R,M)$ is finite over $\coh^*(R,k)$. Then we can conclude that $\coh^*(R\#H,M)$ is finite over $\coh^*(R\#H,k)$ as $\coh^*(R,k)$ is finite over $\coh^*(R,k)^H$ by \cite[Theorem 4.4.2]{MO93}.
\end{proof}

We will work with some {\bf quantum complete intersections} $R$, as recalled here. 

Let $t$ be a positive integer and for each $1\leq i\leq t$, 
let $N_i\ge 2$ be an integer. Let $q_{ij}\in k^{\times}$ for $1\leq i<j\leq t$.
Let $R$ be the $k$-algebra generated by $x_1,\ldots, x_t$,
subject to relations
\[
   x_i x_j = q_{ij} x_j x_i \quad \mbox{ and } \quad 
   x_i^{N_i} = 0,
\] 
for all $i<j$ and all $i$. It is clear that $R$ is augmented since $R$ is local with the unique maximal ideal $(x_1,\ldots,x_t)$. 

The cohomology ring $\coh^*(R,k)$ is finitely generated and noetherian. See~\cite[Theorem~5.3]{BO08} and~\cite[Theorem~4.1]{MPSW}.
The latter reference corrects some small errors in the relations of
the former, but omits the necessary distinction of the cases
where $N_i =2$ for some $i$.
(The proof of exactness of the resolution in~\cite{MPSW}
requires characteristic~0, however it is essentially the same
resolution as that given in~\cite{BO08}, which is proven to be
exact in any characteristic.) So in this case, we know $R$ satisfies {\rm \textbf{(hfg)}} by Proposition \ref{equivfg*} since $R$ is local. 

Hochschild cohomology behaves somewhat differently:
$\HH^*(R)$ itself is in fact finite dimensional for some
choices of values of $q_{ij}$.
By~\cite[Theorem~5.5]{BO08} and Proposition~\ref{equivfg}, 
$\HH^*(R)$ satisfies the finite generation condition {\rm \textbf{(fg)}}
if and only if all $q_{ij}$ are roots of unity, see~\cite{BGMS}.

\begin{prop}\label{prop:m1}
Let $A$ be a finite dimensional complex (resp.\ Hopf) algebra that can be defined over some algebraic number field. If the constructed finite dimensional (resp.\ Hopf) algebra via reduction modulo $p$ over some finite field is the smash product of a quantum complete intersection with a semisimple Hopf algebra, then $A$ satisfies {\rm \textbf{(fg)}} (resp.\ {\rm \textbf{(hfg)}}). 
\end{prop}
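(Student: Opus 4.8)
The plan is to reduce the statement entirely to the finite generation condition for the quantum complete intersection factor, by chaining together the reduction modulo $p$ theorem with the smash product lemma already established. Concretely, I would run the argument in three moves: pass from $A$ to its reduction $A'=R\# H$, verify the condition for $R$ alone, and then re-assemble via the smash product.

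First, since $A$ is defined over an algebraic number field, I would apply Theorem~\ref{thm:m1} in the Hopf case (resp.\ Theorem~\ref{thm:m2} in the associative case) to obtain the finite dimensional algebra $A'$ over a finite field $F$, together with the implication that (hfg) (resp.\ (fg)) for $A'$ forces the same condition on $A$. By hypothesis $A'\cong R\# H$, where $R$ is a quantum complete intersection over $F$ and $H$ is a semisimple Hopf algebra over $F$. Thus the problem reduces to showing that $R\# H$ satisfies (hfg) (resp.\ (fg)).

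Next I would verify the finite generation condition for $R$ itself. In the Hopf (augmented) case this is immediate: the recalled facts give that $\coh^*(R,k)$ is finitely generated and noetherian, so $R$ satisfies (hfg) by Proposition~\ref{equivfg*} since $R$ is local. In the associative-algebra case the decisive point is that the criterion of Proposition~\ref{equivfg} together with \cite{BGMS}, namely that $\HH^*(R)$ satisfies (fg) exactly when all the $q_{ij}$ are roots of unity, holds automatically over a finite field: the group $F^{\times}$ is finite cyclic, so each $q_{ij}\in F^{\times}$ obeys $q_{ij}^{\,|F|-1}=1$ and is a root of unity. Hence $R$ satisfies (fg).

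Finally, I would invoke Lemma~\ref{lem:SP}: the given $H$-action makes $R$ an $H$-module algebra (preserving the augmentation in the Hopf case), and $H$ is semisimple, so the lemma upgrades the condition on $R$ to the same condition on $A'=R\# H$. Combined with the first step, this yields (hfg) (resp.\ (fg)) for $A$. The proof is thus a matter of assembling results already in place, and I do not expect a genuine computational obstacle; the one substantive observation—and the reason the characteristic-zero conclusion survives—is that reduction modulo $p$ lands us in positive characteristic over a finite field, where the root-of-unity hypothesis needed for the quantum complete intersection holds for free.
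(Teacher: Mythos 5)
Your proposal is correct and follows essentially the same route as the paper: reduce via Theorem~\ref{thm:m1} (resp.\ Theorem~\ref{thm:m2}) to the algebra $A'=R\#H$ over the finite field, note that the quantum complete intersection $R$ satisfies \textbf{(hfg)} by Proposition~\ref{equivfg*} (resp.\ \textbf{(fg)} because every element of $F^{\times}$ is a root of unity, which is exactly the observation the paper singles out), and then apply Lemma~\ref{lem:SP} to pass to the smash product. The paper's proof is just a one-line assembly of these same ingredients, so your more explicit write-up matches it step for step.
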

\begin{proof}
It follows from Lemma \ref{lem:SP} and the discussion above, where we notice that any nonzero number in a finite field is a root of unity. 
\end{proof}

Our lifting method of reduction modulo $p$ can be applied to finite dimensional pointed Hopf algebras of diagonal type over the complex numbers. 

\begin{example}
Let $A$ be a finite dimensional pointed Hopf algebra of diagonal type over the field of complex numbers. We follow the strategy of Andruskiewitsch-Schneider in \cite{AS, AS2}. Denote the coradical of $A$ by $\mathbb C[G]$ for some finite group $G$. Then the associated graded algebra $\gr A$ is isomorphic to a smash product algebra $\mathcal B(V)\#\mathbb C[G]$ with respect to its coradical filtration, where $\mathcal B(V)$ is the Nichols algebra of some Yetter-Drinfeld module $V$ in ${}^G_G{\mathcal YD}$. Since the braided space $(V,c)$ is of diagonal type, there is a basis $\{x_1,\dots,x_\theta\}$ of $V$ and a collection of scalars $(q_{ij})_{1\le i,j\le \theta}$ such that $c(x_i\otimes x_j)=q_{ij}x_j\otimes x_i$ for all $1\le i,j\le \theta$. Notice that the coefficients in the braiding matrix $\mathbf{q}=(q_{ij})_{1\le i,j\le \theta}$ are all algebraic numbers since they all come from certain characters of the finite group $G$. Then one can show that $\gr A$ can be defined over the algebraic number field $\mathbb Q(q_{ij})$. Moreover, the Drinfeld double of $\gr A$, denoted by $\mathcal D(\gr A)$, also can be defined over the algebraic number field $\mathbb Q(q_{ij})$. An explicit presentation of $\mathcal D(\gr A)$ can be found in \cite[Lemma 7]{PV16}. 

Now suppose the constructed finite-dimensional Hopf algebra from $\mathcal D(\gr A)$ via reduction modulo $p$ is the smash product of a quantum complete intersection with a semisimple Hopf algebra (e.g., $\mathcal D(G)$ by a careful choice of $p$ such that $p\nmid |G|$). Then by Proposition \ref{prop:m1}, we know  $\mathcal D(\gr A)$ satisfies {\rm \textbf{(hfg)}}. Moreover by Masuoka's result \cite{Ma08}, $A$ is always some 2-cocycle twist of the associated graded algebra $\gr A$. (Also see Angiono and Garcia-Iglesias's survey paper \cite[\S 1.2.1]{AG}.) As a consequence, $A$ can be embedded into the Drinfeld double $\mathcal D(\gr A)$. So we can conclude that $A$ satisfies {\rm \textbf{(hfg)}} by Proposition \ref{prop:ext} in this case.
\end{example}


\appendix
\numberwithin{equation}{subsection}
\section{Some relevant spectral sequences related to cohomology} 
\label{appendix}

For completeness, we provide here explicit descriptions of the spectral sequences related to the (Hochschild) cohomology of filtered algebras, smash products and crossed products. We follow standard arguments to show that these spectral sequences are multiplicative, that is, they preserve the ring structure of the (Hochschild) cohomology.


\subsection{Cohomology of a filtered augmented algebra}
\label{subsec:May}
We recall May's spectral sequence~\cite{M} for the cohomology of
an augmented filtered algebra.
Let $A$ be an augmented algebra with a finite algebra filtration,
either increasing or decreasing:
\[
  0  =  A_{-1} \subset A_0\subset A_1\subset \cdots \subset A_m = A
      \quad \quad
 \mbox{ or } \quad \quad   A  =  A_0 \supset A_1 \supset A_2\supset \cdots \supset A_m = 0 
  , 
\]
so that $A_i A_j \subset A_{i+j}$ for all $i,j$. 
Assume the augmentation $\varepsilon$ on $A$ satisfies $\varepsilon(A_n)=0$ for all 
$n>0$ in the increasing filtration case 
(in the decreasing filtration case, assume $\varepsilon(A_1)=0$), so that it 
induces an augmentation on the associated graded algebra
$\gr A = \bigoplus_{n \ge 0} (\gr A)_n$.
For example, if 
$A$ is a Hopf algebra,
we will be interested in the cases where either the coradical filtration
(increasing) or the Jacobson radical filtration (decreasing)
is in fact a Hopf algebra filtration.

The filtration on $A$ induces a filtration on the reduced bar resolution
of $k$ as an $A$-module: 
\begin{equation}
\label{pdot}
  P_{\bu}: \quad \quad \cdots \stackrel{d_3}{\longrightarrow}
   A\ot \bar{A}^{\ot 2}\stackrel{d_2}{\longrightarrow}
   A\ot \bar{A}\stackrel{d_1}{\longrightarrow}
   A\stackrel{\varepsilon}{\longrightarrow}k \rightarrow 0,
\end{equation}
where $\bar{A} = A/ k\cdot 1_A$ (a vector space quotient). We will give details here for an increasing filtration.
A decreasing filtration leads similarly to a spectral sequence.
The increasing filtration on $P_{\bu}$ induced
by that on $A$ is given in each degree $n$ by
$$
  F_i (A\ot\bar{A}^{\ot n})~=~\sum_{i_0 + \cdots + i_n = i} F_{i_0}A
     \ot F_{i_1}\bar{A}\ot \cdots\ot F_{i_n}\bar{A}.
$$
The reduced bar resolution of $k$ as a module over 
$\gr A$ is precisely $\gr P_{\bu}$, where
 $ \gr_i P_n ~:=~ F_iP_n/F_{i-1}P_n $.
Now let $C^{\bu} = C^{\bu}(A):= \Hom_A(P_{\bu},k)$.
Then $C^n(A)$
is a filtered vector space where
$$
  F^iC^n(A)~ =~ \{ f\in\Hom_A(P_n,k) :  f |_{F_{i-1}P_n} = 0\} .
$$
(In the decreasing filtration case, 
$F^i C^n (A) = \{ f\in\Hom_A(P_n,k):  f |_{F_{i+1}P_n } = 0\}$.)
This filtration is compatible with the differentials on $C^{\bu}(A)$. 
Hence, $C^{\bu}(A)$ is a filtered cochain complex. (A decreasing filtration on $A$ similarly leads to an increasing filtration on $C^{\bu}$).
By construction, if $A$ is finite dimensional, 
the filtration is  both bounded above and below. There is a cohomology spectral sequence associated to the filtration on $C^{\bu}$ (see~\cite[5.4.1 and 5.5.1]{W}), 
$$E_0^{i,j} = \frac{F^iC^{i+j}}{F^{i+1}C^{i+j}}, \qquad E_1^{i,j} = \coh^{i+j}(E_0^{i,*}) \cong \coh^{i+j}( \gr _i A,k),$$ 
converging to the cohomology of $C^{\bu}$:
$   \coh^*(\gr_* A,k)~ \Longrightarrow~ \coh^*(A,k).
$
The spectral sequence is multiplicative by its definition.
Thus $E_{\infty}$ is the associated graded
algebra of $\coh^*(A,k)$. 
(See \cite[4.5.2, 5.2.13, and 5.4.8]{W}.
See also~\cite[Theorem 12.5]{McC}.)
If $M$ is any $A$-module, there is an induced filtration with
components $A_iM$ in either case (increasing or decreasing filtration), and 
there is similarly a spectral sequence
$\widetilde{E} := \widetilde{E}(M)$ associated to the filtration on 
$C^{\bu}(A,M) = \Hom_A(P_{\bu},M)$ given by (in the increasing 
filtration case): 
\[
    F^i C^n (A,M)~ =~ \{ f\in\Hom_A(P_n,M):
     f(F_\ell P_n)\subset F_{\ell-i} M \text{ for all } \ell \} .
\]
Then $\widetilde{E}$ is a differential module over $E$,
$\widetilde{E}^{i,j}_1\cong \coh^{i+j}(\gr_i A, \gr M)$,
and $\widetilde{E}$ converges to $\coh^*(A,M)$ as an $\coh^*(A,k)$-module
(see~\cite[Theorem 4]{M}).


\subsection{Hochschild cohomology of a filtered algebra}
\label{subsec:May-HH}

There are similar spectral sequences for the Hochschild cohomology
of a filtered (not necessarily augmented) algebra $A$.
We assume the filtration on $A$ is {\em increasing} (the case of
a decreasing filtration is similar).
This induces an increasing filtration on the bar resolution
$P_{\bu}$ of $A$ as an $A$-bimodule (equivalently, $A^e$-module 
where $A^e=A\ot A^{op}$). 
Let $C^{\bu}(A,A) = \Hom_{A^e}(P_{\bu}, A)$, a filtered
vector space with {\em decreasing} filtration:
\[
    F^i C^n (A,A)~ =~ \{ f\in\Hom_{A^e}(P_n,A): 
     f(F_j P_n) \subset F_{j-i} A \text{ for all } j\}.
\]
This filtered complex gives rise to a spectral sequence
(see~\cite[5.4.1 and 5.5.1]{W} or~\cite{McC}).
Similar to the construction given in the augmented algebra setting above,
\[
    E^{i,j}_0 ~= ~\frac{F^iC^{i+j} (A,A)}{F^{i+1} C^{i+j}(A,A)},
   \qquad E^{i,j}_1 ~= ~\coh^{i+j}(E^{i,*}_0)~\cong~ \HH^{i+j}(\gr_i A, \gr A).
\]
Moreover if $B$ is an $A$-bimodule, the
filtration on $A$ induces a filtration on $B$ and 
on $C^{\bu}(A,B) = \Hom_{A^e}(P_{\bu},B)$:
\[
   F^i C^n (A,B) ~=~ \{ f\in\Hom_{A^e}(P_n,B): 
        f(F_\ell P_n) \subset F_{\ell-i} B \text{ for all } \ell \}.
\]
There is
similarly a spectral sequence $\widetilde{E}$ associated to
this filtration,
$\widetilde{E}$ is a differential module over $E$,
$\widetilde{E}_1^{i,j} \cong \HH^{i+j}(\gr_i A, \gr B)$,
and $\widetilde{E}$ converges to $\HH^*(A,B)$ as an $\HH^*(A)$-module.


\subsection{Cohomology of an augmented crossed product}
\label{subsec:cohomology-smash}

Assume $H$ is a finite dimensional Hopf algebra over a field $k$ with antipode $S$, $R$ is an algebra for which a crossed product $R\#_{\sigma} H$ exists, and $R$ and $R\#_{\sigma}H$ are augmented compatibly with the counit on $H$. We explicitly construct a multiplicative spectral sequence converging to $\Ext^*_{R\#_{\sigma} H}(k,k)$. This spectral sequence is known; see e.g., similar 
constructions of~\cite[\S 5]{ginzburg-kumar93} and~\cite[\S 2.10]{SV}. We will however need some additional structure on the spectral sequences for applications, 
and we give details of the construction
to make these additional structures more transparent. 

For any $R\#_{\sigma} H$-modules $U$ and $V$, there is a right $H$-module structure
on $\Hom_R(U,V)$ given by
\begin{equation}\label{eqn:right-action}
   (f\cdot h) (u)~=~\sum S(h_1)\cdot (f(h_2\cdot u))
\end{equation} 
for all $f\in\Hom_R(U,V)$, $h\in H$, and $u\in U$. 
A calculation shows that $f\cdot h$ is indeed an $R$-module
homomorphism for all $h\in H$ and $f\in \Hom_R(U,V)$ since $U,V$
are $R\#_{\sigma} H$-modules. Another calculation shows that this gives an $H$-module structure
to $\Hom_R(U,V)$. 

Let $P_{\bu}$ be a projective resolution of $k$ as an $H$-module.
Let $Q_{\bu}$ be a projective resolution of $k$ as an $R\#_{\sigma} H$-module.
Since $R\#_{\sigma} H$ is free as an $R$-module, $Q_{\bu}$ restricts to a projective
resolution of $k$ as an $R$-module. Let $M$ be an $R\#_{\sigma} H$-module, which can be considered as an $R$-module. 

For all $i,j\geq 0$,
let 
\[
    C^{i,j}(M)~=~\Hom_H(P_i, \Hom_R(Q_j,M)) .
\]
Letting $\coh '$ denote homology with respect to the horizontal differentials
and $\coh ''$ with respect to the vertical differentials,
the spectral sequence associated to this double complex has second page
\[
    E_2^{*,*}(M) ~= ~\coh ' \coh '' (C^{i,j}) 
        ~ \cong~  \coh ' (\Hom_H (P_{\bu}, \coh^*(R,M)) 
   ~ \cong~  \coh^*(H, \coh^*(R,M)) .
\]
We wish to compare with the spectral sequence obtained by reversing 
the roles of $i$ and $j$.
To that end, we will need the following lemma.

\begin{lemma}
For each $j$, the $H$-module
$\Hom_R (Q_j , M)$ is projective.
\end{lemma}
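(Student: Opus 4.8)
The plan is to show that $\Hom_R(Q_j,M)$ is an \emph{injective} $H$-module; since $H$ is a finite-dimensional Hopf algebra it is Frobenius by the Larson--Sweedler theorem, hence self-injective, and being finite-dimensional it is quasi-Frobenius, so projectivity and injectivity coincide for all $H$-modules. First I would reduce to the case of a free module. As $Q_j$ is projective over $R\#_\sigma H$, it is a direct summand of a free module $F=(R\#_\sigma H)^{(I)}$, with $R\#_\sigma H$-linear maps $\iota\colon Q_j\to F$ and $\pi\colon F\to Q_j$ satisfying $\pi\iota=\id$. The $H$-action of \eqref{eqn:right-action} is natural in the $R\#_\sigma H$-module variable of $\Hom_R(-,M)$ (one checks directly that $\Hom_R(g,M)$ is $H$-linear for any $R\#_\sigma H$-linear $g$, using that $g$ commutes with the action of each $1\#h_2$), so applying $\Hom_R(-,M)$ produces a split $H$-linear injection $\Hom_R(Q_j,M)\hookrightarrow\Hom_R(F,M)\cong\prod_{I}\Hom_R(R\#_\sigma H,M)$. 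Since an arbitrary product of injectives is injective and a direct summand of an injective is injective, it suffices to prove that $\Hom_R(R\#_\sigma H,M)$ is an injective $H$-module.

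For this single free generator I would exploit that a crossed product is free over its base: $R\#_\sigma H\cong R\ot H$ as a left $R$-module, with basis a $k$-basis of $H$. Restricting an $R$-linear map to the subspace $1\# H$ therefore gives a $k$-linear isomorphism $\Hom_R(R\#_\sigma H,M)\cong\Hom_k(H,M)$. The aim is to upgrade this to an isomorphism of $H$-modules, where $\Hom_k(H,M)$ carries the coinduced structure in which $H$ acts only through its regular action on the argument and $M$ enters merely as a vector space. Granting this, $\Hom_k(H,M)$ is injective: coinduction $\Hom_k(H,-)$ from $k$-vector spaces to $H$-modules is right adjoint to the exact restriction functor, hence preserves injectives, and every $k$-vector space is injective. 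Self-injectivity of $H$ then upgrades injectivity to projectivity, finishing the proof.

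The hard part is exactly this identification of module structures. Under the restriction isomorphism, the codiagonal action $(f\cdot h)(u)=\sum S(h_1)\,f(h_2\cdot u)$ of \eqref{eqn:right-action} couples the action of $S(h_1)$ on the target $M$ with the left regular action of $1\#h_2$ on the source, and the latter drags in the cocycle through the crossed-product multiplication $(1\#h)(1\#x)=\sum\sigma(h_1,x_1)\#h_2x_2$. I would dispatch this by evaluating $(f\cdot h)(1\#x)$, moving the scalars $\sigma(h_1,x_1)\in R$ across $f$ by its $R$-linearity onto the $M$-side, and then reindexing via bijectivity of the antipode together with the counit axioms; the net effect should be to absorb the $S(h_1)$ twist and the cocycle into a relabeling of the $H$-argument, leaving only the Hopf structure of $H$ and hence the pure coinduced action. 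Because the computation is local to $H$, I expect it to go through for arbitrary $\sigma$. An alternative that avoids this bookkeeping is to prove injectivity directly: over the field $k$ the functor $\Hom_k(-,M)$ is exact, so it would be enough to exhibit a natural isomorphism $\Hom_H\bigl(L,\Hom_R(R\#_\sigma H,M)\bigr)\cong\Hom_k(\mathcal F(L),M)$ for some exact functor $\mathcal F$ of $L$, whence exactness of $\Hom_H\bigl(-,\Hom_R(R\#_\sigma H,M)\bigr)$, and thus injectivity, is immediate.
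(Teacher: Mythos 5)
Your skeleton tracks the paper's proof closely: both arguments reduce to the single module $\Hom_R(R\#_\sigma H,M)$ (you via a split $H$-linear embedding into a product, the paper via additivity of $\Hom_R$), and both pass through the linear identification $\Hom_R(R\#_\sigma H,M)\cong\Hom_k(H,M)$. Where you diverge is the finish, and the two finishes are dual to one another: the paper identifies $\Hom_k(H,M)\cong M\ot H^{\vee}$ as right $H$-modules (diagonal action, with $M$ made a right module through $S$), applies the Frobenius isomorphism $H^{\vee}\cong H$, and quotes the tensor identity \cite[Proposition 3.1.5]{Benson1} to conclude that $M\ot H$ is projective; you instead want to untwist $\Hom_k(H,M)$ into the coinduced module $\Hom_k(H,M_{\mathrm{triv}})$, deduce injectivity from the coinduction adjunction, and convert injectivity into projectivity via self-injectivity of the finite dimensional Hopf algebra $H$. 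Both finishes consume the Frobenius property of $H$, just packaged differently. One point where your route is actually tighter than the paper's: the appeal to additivity covers finite direct sums at face value, whereas your product-of-injectives argument handles an arbitrary index set $I$ cleanly (over a self-injective algebra, products of projectives are projective, but the transparent proof of that fact is exactly your injectivity detour). Your naturality check that $\Hom_R(g,M)$ is $H$-linear for $R\#_\sigma H$-linear $g$ is correct and needed.

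The gap is that the step you yourself flag as the hard part is never carried out, and the mechanism you propose for it would fail for a genuinely crossed product. The cocycle values produced by $(1\# h_2)(1\# x)=\sum\sigma(h_2,x_1)\# h_3x_2$ act on $M$ through $R\subset R\#_\sigma H$: explicitly, $(f\cdot h)(1\# x)=\sum\bigl[(1\# S(h_1))(\sigma(h_2,x_1)\#1)\bigr]\cdot f(1\# h_3x_2)$, and the operator on the left is not of the form $1\#(\,\cdot\,)$, so no reindexing of the $H$-argument alone can absorb it. For a smash product ($\sigma$ trivial) your plan does go through verbatim: $\Phi(f)(\ell)=\sum S(\ell_1)\cdot f(\ell_2)$ is an $H$-isomorphism onto the coinduced module, with inverse $\Psi(g)(\ell)=\sum\ell_1\cdot g(\ell_2)$. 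For nontrivial $\sigma$, however, any untwisting must be built from the convolution inverse of the cleaving map $\gamma(h)=1\# h$, namely $\gamma^{-1}(h)=\sum\sigma^{-1}(S(h_2),h_3)\# S(h_1)$ (see \cite{MO93}), not from $1\# S(\,\cdot\,)$, and the bookkeeping involves identities such as $\gamma^{-1}(hx)=\sum\gamma^{-1}(x_1)\gamma^{-1}(h_1)(\sigma(h_2,x_2)\#1)$ rather than a relabeling. The paper's factorization through $M\ot H^{\vee}$ exists precisely to sidestep this computation: the twist on $M$ is absorbed abstractly by $M\ot H\cong M_{\mathrm{triv}}\ot H$, with no explicit untwisting of $\Hom_k(H,M)$ required. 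Finally, your closing ``alternative'' --- positing an exact functor $\mathcal F$ with $\Hom_H\bigl(L,\Hom_R(R\#_\sigma H,M)\bigr)\cong\Hom_k(\mathcal F(L),M)$ --- is a restatement of the missing adjunction, not an argument for it. So: correct architecture and a legitimate dual strategy, but the pivotal module identification remains unproved, and your sketch of how to prove it is not repairable as stated.
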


\begin{proof}
For each $j$, the $H$-module $Q_j$, under restriction from $R\#_{\sigma} H$,
is projective, since $R\#_{\sigma} H$ is free as an $H$-module. 
In order to see that $\Hom_R(Q_j, M)$ is also projective as an $H$-module,
it suffices to prove that $\Hom_R(R\#_{\sigma} H , M)$ is a projective $H$-module,
since $\Hom_R$ and taking cohomology are additive. 
First notice that 
\[
    \Hom_R (R\#_{\sigma} H , M)~\cong~ \Hom_k(H, M)
\]
as vector spaces since an $R$-module homomorphism from $R\#_{\sigma} H$ to $M$
is determined by its values on $H$.
The right $H$-action on $\Hom_k(H,M)$ induced by this isomorphism
is also given by formula~\eqref{eqn:right-action}.
We claim that $\Hom_k(H,M)\cong M\ot H^{\vee}$ as right $H$-modules,
where $M$ is a right $H$-module via its left $H$-module structure
and the antipode $S$, and $H^{\vee} = \Hom_k(H,k)$ is a right
$H$-module via left multiplication of $H$ on itself:
\[
   m\cdot h ~= ~S(h)\cdot m \ \ \ \mbox{ and } \ \ \
     (f\cdot h) (\ell) ~= ~f(h\ell),
\]
for all $m\in M$, $h,\ell\in H$, and $f\in H^{\vee}$.
The isomorphism $\Hom_k(H,M)\stackrel{\sim}{\longrightarrow}
M\ot H^{\vee}$ is given by sending $m\ot f$ to the function 
taking $\ell$ to $f(\ell) m$, for all $m\in M$, $f\in\Hom_k (H,M)$,
and $\ell\in H$. 
Since $H$ is a Frobenius algebra, $H^{\vee}\cong H$ as an $H$-module
(an isomorphism $H\stackrel{\sim}{\longrightarrow} H^{\vee}$ is given
by sending $h$ to the function that takes $\ell$ to $\phi(h\ell)$,
where $\phi$ is an integral of the dual Hopf algebra to $H$). 
It follows that $M\ot H^{\vee} \cong M\ot H$ as $H$-modules.
Since $H$ is free as a right $H$-module,
the $H$-module $M\ot H$ is projective~\cite[Proposition 3.1.5]{Benson1}.
\end{proof}

Now, reversing the roles of $i$ and $j$, there is another spectral sequence $\widetilde{E}^{*,*}(M)$ with second page 
\[
   \widetilde{E}^{*,*}_2(M) \cong \coh '' \coh ' (C^{i,j})~ = ~
     \coh '' (\coh^*(H, \Hom_R(Q_{\bu} , M)).
\]
Since $\Hom_R(Q_j, M)$ is a projective $H$-module for each $j$,
it is also injective, and so
\[
   \coh^*(H, \Hom_R(Q_{\bu}, M))  ~\cong~  \coh^0 (H, \Hom_R(Q_{\bu}, M))
    ~\cong~  \Hom_R(Q_{\bu} , M) ^H
   ~\cong~  \Hom_{R\#_{\sigma} H} (Q_{\bu}, M) .
\]
Thus we see that $\widetilde{E}_2(M) \cong \coh^*(R\#_{\sigma} H , M)$.

The above discussion yields the following result.

\begin{lemma} 
\label{lem:ss-mod} 
Let $M$ be an $R \#_{\sigma} H$-module. There is a spectral sequence $E^{*,*}(M)$ with
\begin{equation*}
    E_2^{p,q}(M)= \coh^p(H,\coh^q(R,M)) \Longrightarrow \coh^{p+q}(R\#_{\sigma} H , M) .
\end{equation*}
\end{lemma}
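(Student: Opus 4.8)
The plan is to read off the lemma from the two spectral sequences attached to the first-quadrant double complex $C^{\bu,\bu}(M)$ with $C^{i,j}(M)=\Hom_H(P_i,\Hom_R(Q_j,M))$ constructed above. Since $P_{\bu}$ and $Q_{\bu}$ are resolutions indexed by nonnegative integers, $C^{\bu,\bu}(M)$ lies in the first quadrant, so both of its spectral sequences converge to the cohomology of the total complex $\Tot C^{\bu,\bu}(M)$. It therefore suffices to identify the $E_2$ page of one filtration with $\coh^p(H,\coh^q(R,M))$ and to show that the reversed filtration forces the total cohomology to be $\coh^*(R\#_\sigma H,M)$.

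For the first filtration I would take cohomology in the $Q_{\bu}$ (vertical) direction first. Because $R\#_\sigma H$ is free over $R$, the complex $Q_{\bu}$ restricts to a projective resolution of $k$ over $R$; moreover each $P_i$ is $H$-projective, so $\Hom_H(P_i,-)$ is exact and commutes with taking vertical cohomology. Hence $\coh''$ of the $i$-th column computes $\Hom_H(P_i,\coh^q(R,M))$, where $\coh^q(R,M)$ carries the induced $H$-module structure coming from~\eqref{eqn:right-action}. Taking $\coh'$ in the $P_{\bu}$ direction then yields $E_2^{p,q}(M)\cong \coh^p(H,\coh^q(R,M))$, exactly as recorded above.

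For the reversed filtration $\widetilde E^{\bu,\bu}(M)$ I would take cohomology in the $P_{\bu}$ direction first, and here the preceding lemma is the crucial input: each $\Hom_R(Q_j,M)$ is projective as an $H$-module, and since $H$ is a finite dimensional (hence self-injective Frobenius) algebra, it is also injective. Thus $\coh^{>0}(H,\Hom_R(Q_j,M))=0$, the spectral sequence $\widetilde E$ collapses onto its bottom row, and $\coh^0(H,\Hom_R(Q_{\bu},M))\cong \Hom_R(Q_{\bu},M)^H\cong \Hom_{R\#_\sigma H}(Q_{\bu},M)$. Taking cohomology of the latter complex computes $\coh^*(R\#_\sigma H,M)$, so the total cohomology of $\Tot C^{\bu,\bu}(M)$ is $\coh^*(R\#_\sigma H,M)$. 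Combining the two computations, the spectral sequence $E^{\bu,\bu}(M)$ converges to $\coh^{p+q}(R\#_\sigma H,M)$, which is the assertion of the lemma.

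The main obstacle is precisely the collapse of the reversed spectral sequence, that is, the injectivity of $\Hom_R(Q_j,M)$ as an $H$-module; this is supplied by the Frobenius property of $H$ together with the isomorphism $\Hom_R(R\#_\sigma H,M)\cong M\otimes H^{\vee}\cong M\otimes H$ of $H$-modules established above. Once this vanishing is in hand, the remainder is the standard comparison of the two spectral sequences of a first-quadrant double complex, and no further delicate convergence issues arise since $H$ is finite dimensional and both resolutions are nonnegatively graded.
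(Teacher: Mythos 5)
Your proposal is correct and follows essentially the same route as the paper: the paper's proof of this lemma likewise consists of the two filtrations of the first-quadrant double complex $\Hom_H(P_\bu,\Hom_R(Q_\bu,M))$, with the $E_2$-identification $\coh^p(H,\coh^q(R,M))$ on one side and the collapse of the reversed spectral sequence via the projectivity (hence injectivity, as $H$ is Frobenius) of $\Hom_R(Q_j,M)$ as an $H$-module on the other, yielding $\Hom_{R\#_\sigma H}(Q_\bu,M)$ and boundedness-based convergence. All the ingredients you cite, including the isomorphism $\Hom_R(R\#_\sigma H,M)\cong M\otimes H^{\vee}\cong M\otimes H$, are exactly the ones the paper uses.
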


\begin{proof}
By construction, $E^{*,*}(M)$ is a first quadrant spectral sequence,
and so the standard filtration for a double complex is bounded.
It follows that $E^{*,*}(M)$ converges (see, e.g.,~\cite[Theorem~5.5.1]{W}).
The rest of the statement results from our discussion above.
\end{proof}

Next we will show that when $M =k$, the spectral sequence $E^{*,*}(k)$ 
is multiplicative, and further that $E^{*,*}(M)$ is a differential
bigraded module over $E^{*,*}(k)$.
We will prove the latter statement; the former is a special case.

For all $i,j\geq 0$, let
\[
   B^{i,j} ~=~ \Hom_H(P_i, \Hom_R(Q_{\bu}, Q_{\bu})_j) ,
\]
which is quasi-isomorphic to $C^{\bu,\bu}$ when $M=k$.
There is an action of $B^{\bu,\bu}$ on $C^{\bu,\bu}$ given as follows.
Let $\Delta: P_{\bu}\rightarrow P_{\bu}\ot P_{\bu}$
be a diagonal chain map.
Let
\[
   \mu : \Hom_R(Q_{\bu}, M) \ot \Hom_R(Q_{\bu},Q_{\bu})
       \rightarrow \Hom_R(Q_{\bu},M)
\]
denote composition of functions.

Let $f\in C^{i,j}$ and $g\in B^{i',j'}$, and define $f\cdot g\in C^{i+i',j+j'}$ by
\begin{equation}\label{eqn:fdotg}
   f\cdot g ~= ~\mu (f\ot g) \Delta .
\end{equation}

\begin{prop}
\label{prop:dgmod-cplx-coh}
For any $R \#_{\sigma} H$-module $M$, the bicomplex $C^{\bu,\bu} = \Hom_H(P_{\bu}, \Hom_R(Q_{\bu},M))$ 
is a differential bigraded module over $B^{\bu,\bu} = \Hom_H(P_{\bu}, \Hom_R(Q, Q)_{\bu})$ under the action defined by equation~(\ref{eqn:fdotg}). 
\end{prop}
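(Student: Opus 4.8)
The plan is to verify, in turn, the three ingredients of a differential bigraded module structure: that \eqref{eqn:fdotg} lands in the correct bidegree and is $H$-linear, that it is associative and unital over the bigraded algebra $B^{\bu,\bu}$, and that the total differential of the bicomplex acts as a derivation for this action.

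First I would check that the action is well defined. For $f\in C^{i,j}$ and $g\in B^{i',j'}$, restricting the diagonal map $\Delta$ to $P_{i+i'}$ and projecting onto the $P_i\otimes P_{i'}$ summand, then applying $f\otimes g$ and composing via $\mu$, yields a map $P_{i+i'}\to \Hom_R(Q_{j+j'},M)$; hence $f\cdot g\in C^{i+i',\,j+j'}$ as required. Its $H$-linearity follows because $\Delta$ is a map of $H$-modules (diagonal approximations being $H$-equivariant for the diagonal action on $P_{\bu}\otimes P_{\bu}$), $f$ and $g$ are $H$-module homomorphisms, and the composition map $\mu$ is $H$-equivariant for the right $H$-actions of \eqref{eqn:right-action}; this last point is a direct computation using the coproduct and antipode of $H$. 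Associativity then reduces to two strict facts, namely associativity of composition of $R$-module maps (encoded in $\mu$) together with coassociativity of the chosen diagonal chain map $\Delta$, and unitality is immediate from the counit component of $\Delta$ and the identity endomorphism in $\End_R(Q_{\bu})$.

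The heart of the argument, and the step I expect to require the most care, is the Leibniz rule for the total differential $d=d_h+d_v$, where $d_h$ is induced by the differential of $P_{\bu}$ and $d_v$ by those of $\Hom_R(Q_{\bu},M)$ and $\Hom_R(Q,Q)_{\bu}$. I would split this identity into its horizontal and vertical parts. The horizontal piece, $d_h(f\cdot g)=(d_hf)\cdot g\pm f\cdot(d_hg)$, follows from $\Delta$ being a chain map for the differential $d_P$, so that it intertwines the differential on $P_{i+i'}$ with $d_P\otimes 1\pm 1\otimes d_P$ on the tensor product. The vertical piece follows from the standard fact that $\mu$, composition in the Hom-complexes, is itself a chain map, i.e., the differential graded category Leibniz rule for composition of maps of complexes. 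The main obstacle is essentially the sign bookkeeping: one must reconcile the Koszul signs arising from the two gradings so that the horizontal and vertical rules assemble into a single derivation property for $d$, with the total-degree sign $(-1)^{i+j}$ on the left factor. Once the sign conventions fixed in the Appendix are respected, the remaining verification is a routine diagram chase.
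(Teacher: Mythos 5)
Your proposal is correct and, at its core, coincides with the paper's proof: the paper's entire argument is the verification of the Leibniz identity~\eqref{eqn:delta-f-g}, carried out in a single computation with the total differential using exactly your two inputs --- that $\Delta$ is a chain map (your horizontal piece) and that $\mu$ makes $\Hom_R(Q_{\bu},M)$ a differential graded module over the dg algebra $\Hom_R(Q_{\bu},Q_{\bu})$ (your vertical piece) --- so your horizontal/vertical split, with the total-degree sign $(-1)^{i+j}$ on the left factor, is just a reorganization of the same calculation. One small caution on your extra associativity step: a general diagonal chain map $\Delta$ is coassociative only up to chain homotopy, so strict associativity of the action requires choosing a strictly coassociative diagonal (e.g., the standard one on the bar resolution); this is harmless here, since the paper's proof checks only the Leibniz rule, which is all that is needed for the action to descend to every page of the spectral sequence.
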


\begin{proof}
Let $f\in C^{i,j}$ and $g\in B^{i',j'}$. It suffices for us to check that the following equation holds: 
\begin{equation}
\label{eqn:delta-f-g}
   \delta(f\cdot g) ~=~ \partial (f) \cdot g + (-1)^{i+j} g \cdot \delta(g) ,
\end{equation}
where $\delta$ is the differential on $C^{\bu,\bu}$ and 
$\partial$ is the differential on $B^{\bu,\bu}$.

To verify \eqref{eqn:delta-f-g}, we expand the left side of~\eqref{eqn:delta-f-g}, letting
$d_Q^*(f\cdot g)$ denote composition of the function $f\cdot g$
with $d_Q$, to obtain
\begin{eqnarray*}
      \delta(f\cdot g) &~ = ~& d_Q^*(f\cdot g) + 
   (-1)^{i+j+i'+j'+1} (f\cdot g) d_P \\
    &~=~& d_Q^* \mu (f\ot g) \Delta + (-1)^{i+j+i'+j'+1} \mu 
     (f\ot g) \Delta d_p .
\end{eqnarray*}
Now $\Delta$ is a chain map, and $\Hom_R(Q_{\bu},Q_{\bu})$ is a
differential graded algebra with differential graded module
$\Hom_R(Q_{\bu},k)$, so letting $\delta_Q$ denote the
differential on $\Hom_R(Q_{\bu},Q_{\bu})$, the above expression
is equal to 
\begin{eqnarray*}
  &&\hspace{-1cm} \mu(d_Q^* f\ot g)\Delta + (-1)^{i+j} \mu (f\ot \delta_Q g)\Delta 
    + (-1)^{i+j+i'+j'+1} \mu(f\ot g) (d_P\ot 1
     + 1\ot d_P ) \Delta \\
  &~=~&  \mu (d_Q^* f\ot g + (-1)^{i+j} f\ot \delta_Q g 
    + (-1)^{i+j+1} fd_P\ot g + (-1)^{i+j+i'+j'+1} f\ot gd_P ) \Delta .
\end{eqnarray*}
On the other hand, 
\begin{eqnarray*}
 && \hspace{-1cm} \partial(f) \cdot g  + (-1)^{i+j} f\cdot \delta (g)\\
  &~=~&
   \mu ( (d_Q^* f + (-1)^{i+j+1} fd_P)\ot g ) \Delta 
  + (-1)^{i+j} \mu (f\ot (\delta_Q g + (-1)^{i'+j'+1} g d_P )) \Delta\\
 &~=~& \mu ( d_Q^* f \ot g + (-1)^{i+j+1} fd_P\ot g 
   + (-1)^{i+j} f \ot \delta_Q g + (-1)^{i+j +i'+j'+1} f\ot g d_p)\Delta ,
\end{eqnarray*}
and we see that the two expressions are equal, verifying~\eqref{eqn:delta-f-g}.
\end{proof}

We now see that from the definitions that each page $E^{*,*}_r(M)$ is a 
differential graded module over $E^{*,*}_r(k)$.
It follows that 
the action of $\coh^*(R\#_{\sigma} H , k)$ on $\coh^*(R\#_{\sigma} H,M)$ indeed arises
as a consequence of the above structure induced on the spectral sequences
corresponding to the bicomplexes $C^{\bu,\bu}$ and $B^{\bu,\bu}$:
These structures arise from the diagonal map $\Delta$ on $P_{\bu}$
and action given by composition on $\Hom_R(Q_{\bu},M)\ot \Hom_R(Q_{\bu},Q_{\bu})$. We conclude a needed result concerning spectral sequences for the cohomology of augmented smash products. 

\begin{prop}
\label{prop:dgmod-ss-coh}
Let $H$ be a finite dimensional Hopf algebra over a field $k$ and let $R$ be an $H$-module algebra. Assume $R$ and $R \#_{\sigma}H$ are augmented compatibly with the counit on $H$. For any $R \#_{\sigma} H$-module $M$, let $E^{*,*}(M)$ denote the spectral sequence of 
Lemma~\ref{lem:ss-mod}. 
Then $E^{*,*}(M)$ is a differential bigraded module over the dg algebra $E^{*,*}(k)$, and
the following properties hold:
\begin{enumerate}
 \item $\Ext^*_R(k,k)$ is a graded $H$-module algebra. 
 \item $\Ext^*_R(k,M)$ is a graded module over $\Ext^*_R(k,k) \# H$. 
 \end{enumerate}

\end{prop}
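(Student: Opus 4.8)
The plan is to obtain all three assertions from the chain-level structures already assembled, and to descend them to cohomology and to the pages of the spectral sequence. For the differential bigraded module claim I would appeal directly to Proposition~\ref{prop:dgmod-cplx-coh}: the bicomplex $C^{\bu,\bu}=\Hom_H(P_\bu,\Hom_R(Q_\bu,M))$ is a differential bigraded module over $B^{\bu,\bu}=\Hom_H(P_\bu,\Hom_R(Q,Q)_\bu)$ under the action of~(\ref{eqn:fdotg}). Since $E^{*,*}(M)$ is the spectral sequence of the filtered total complex of $C^{\bu,\bu}$, while $E^{*,*}(k)$ arises from $B^{\bu,\bu}$ (which is quasi-isomorphic to $C^{\bu,\bu}$ when $M=k$), and since the action in~(\ref{eqn:fdotg}) preserves the filtration by horizontal $P$-degree, this module structure descends to every page. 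Thus $E_r^{*,*}(M)$ is a differential bigraded module over the differential bigraded algebra $E_r^{*,*}(k)$ for all $r$, including $r=\infty$, as asserted by the preceding discussion.

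For part (i), I would realize $\Ext^*_R(k,k)$ as the cohomology of the differential graded algebra $(\Hom_R(Q_\bu,Q_\bu),\circ)$. The right $H$-action of~(\ref{eqn:right-action}) on each $\Hom_R(Q_j,Q_{j'})$ commutes with the differential, so it passes to cohomology; it remains to verify the $H$-module algebra identity $(\phi\circ\psi)\cdot h=\sum(\phi\cdot h_1)\circ(\psi\cdot h_2)$ at the cochain level. Expanding the right-hand side by~(\ref{eqn:right-action}) and using the $H$-module axioms for $Q_\bu$ brings it to $\sum S(h_1)\cdot\phi\big((h_2 S(h_3))\cdot\psi(h_4\cdot u)\big)$; the antipode and counit axioms then give the collapse $\sum h_1\otimes h_2 S(h_3)\otimes h_4=\sum h_1\otimes 1\otimes h_2$, reducing the expression to $\sum S(h_1)\cdot\phi(\psi(h_2\cdot u))=\big((\phi\circ\psi)\cdot h\big)(u)$. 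Because $H$ sits in homological degree $0$, no Koszul signs intervene, so $\Ext^*_R(k,k)$ is a graded $H$-module algebra.

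For part (ii), note that $\Ext^*_R(k,M)=\coh^*(\Hom_R(Q_\bu,M))$ is a graded module over $\Ext^*_R(k,k)$ via composition $m\mapsto m\circ\phi$ and an $H$-module via~(\ref{eqn:right-action}). To see that these combine into a single action of the smash product $\Ext^*_R(k,k)\#H$, I would check the compatibility $(m\circ\phi)\cdot h=\sum(m\cdot h_1)\circ(\phi\cdot h_2)$, which is exactly the defining relation for a module over the smash product. This is verified by the identical computation as in part (i), now with $m\in\Hom_R(Q_\bu,M)$ in the role of the outer factor, again invoking~(\ref{eqn:right-action}), the $H$-module axioms for $Q_\bu$, and the same antipode/counit collapse.

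The main obstacle will be the bookkeeping in these two cochain-level identities: one must track the fourfold coproduct of $h$ through formula~(\ref{eqn:right-action}) and combine the $H$-module axioms for $Q_\bu$ with the antipode axiom to secure the collapse $\sum h_1\otimes h_2 S(h_3)\otimes h_4=\sum h_1\otimes 1\otimes h_2$. A secondary, but formal, point is confirming that the cocycle-level structures descend to $\Ext$ and match the module structure on the $E_2$-page $\coh^*(H,\Ext^*_R(k,-))$. It is worth emphasizing that no cocommutativity of $H$ is needed at any step, which is consistent with the crossed-product application in Theorem~\ref{thm:FiniteTypeCoh}.
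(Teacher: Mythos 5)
Your route is essentially the paper's: deduce the differential bigraded module structure on $E^{*,*}(M)$ from Proposition~\ref{prop:dgmod-cplx-coh}, then get (i) and (ii) by proving that composition $\mu$ is $H$-equivariant at the chain level and passing to homology. However, your chain-level verification of $(\phi\circ\psi)\cdot h=\sum(\phi\cdot h_1)\circ(\psi\cdot h_2)$ contains a step that fails in the stated generality. You invoke ``the $H$-module axioms for $Q_{\bu}$'' to rewrite $h_2\cdot\bigl(S(h_3)\cdot\psi(h_4\cdot u)\bigr)$ as $(h_2S(h_3))\cdot\psi(h_4\cdot u)$ and then apply the antipode collapse. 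But the proposition concerns a crossed product $R\#_{\sigma}H$: when the cocycle $\sigma$ is nontrivial there is no algebra map $H\to R\#_{\sigma}H$, so the $R\#_{\sigma}H$-modules $Q_j$ are \emph{not} $H$-modules. Writing $h\cdot u=(1\# h)u$, one instead has $h\cdot(h'\cdot u)=\sum\sigma(h_1,h'_1)\cdot\bigl((h_2h'_2)\cdot u\bigr)$, so your rewriting introduces cocycle factors of the form $\sigma(h_i,S(h_j))$ that the collapse $\sum h_1\ot h_2S(h_3)\ot h_4=\sum h_1\ot 1\ot h_2$ (which is itself a correct Hopf identity) does not eliminate. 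The identity you want is still true---it is exactly the paper's assertion that $\mu$ is an $H$-module map for the action \eqref{eqn:right-action}, and the same cocycle cancellation that makes $f\cdot h$ an $R$-module map in the first place (the ``calculation'' at the start of Appendix~\ref{subsec:cohomology-smash}, which uses that $U,V$ are $R\#_{\sigma}H$-modules) is what makes composition equivariant---but your derivation as written only proves it in the smash-product case $\sigma$ trivial. This matters precisely because you emphasize that your argument should cover the crossed-product application in Theorem~\ref{thm:FiniteTypeCoh}.

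The remainder of your proposal agrees with the paper's proof and is sound: the action \eqref{eqn:fdotg} of $B^{\bu,\bu}$ on $C^{\bu,\bu}$ is filtration-preserving and so descends to every page of the spectral sequence of Lemma~\ref{lem:ss-mod}, and the compatibility $(m\circ\phi)\cdot h=\sum(m\cdot h_1)\circ(\phi\cdot h_2)$ is indeed the smash-product module condition needed for (ii). Two repairs are needed. First, redo the equivariance computation for general $\sigma$, using $h\cdot(h'\cdot u)=\sum\sigma(h_1,h'_1)\cdot\bigl((h_2h'_2)\cdot u\bigr)$ and $h\cdot(r\cdot u)=\sum(h_1\cdot r)\cdot(h_2\cdot u)$ together with the $R$-linearity of the outer map, and check that all $\sigma$-factors cancel; alternatively, quote the $H$-equivariance of $\mu$ directly from the definition, as the paper does. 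Second, the point you defer as ``formal''---that the composition action on $\coh^*(\Hom_R(Q_{\bu},M))$ agrees with the Yoneda action of $\Ext_R^*(k,k)$ on $\Ext_R^*(k,M)$---is part of the proof rather than an afterthought, since (i) and (ii) are statements about $\Ext$; the paper records it via an explicit commutative diagram, using that Yoneda splices of generalized extensions correspond to compositions of the chain maps representing them.
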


\begin{proof}
The first statement is an immediate consequence of 
Proposition~\ref{prop:dgmod-cplx-coh} by the above discussion. 
The rest involves the $H$-action. Note that 
from the definition~\eqref{eqn:right-action} of the $H$-action, 
we see that $\mu$ is an $H$-module map, that is
$
\mu((f \ot g) \cdot h) ~=~ (\mu \cdot h)(f \ot g).
$
Further, the action of $H$ commutes with the differentials.
Passing to homology, the induced map 
$$\coh^*(\mu) : \coh^*(\Hom_R(Q_{\bu}, M)) \ot \coh^*(\Hom_R(Q_{\bu},Q_{\bu})) \rightarrow \coh^*(\Hom_R(Q_{\bu},M))$$
is an $H$-module map. We see that 
the following diagram commutes, where the bottom row is given by Yoneda product: 
\[ 
\begin{xy}*!C\xybox{
\xymatrixcolsep{2pc}
\xymatrix{
 & \coh^*(\Hom_R(Q_{\bu}, M)) \ot \coh^*(\Hom_R(Q_{\bu},Q_{\bu})) \ar[rr]^-{\coh^*(\mu)} \ar@{=}[d] && \coh^*(\Hom_R(Q_{\bu},M)) \ar@{=}[d] \\
& \Ext^*_R(k,M) \otimes \Ext^*_R(k,k) \ar[rr] && \Ext^*_R(k,M)
}}
\end{xy} 
\]
The diagram is commutative by a standard argument:
In the correspondence of elements of $\Ext$ with
generalized extensions, Yoneda composition (or splice) of
generalized extensions corresponds with composition of chain maps
representing the generalized extensions. 
Moreover, the $H$-action is compatible with the cup/Yoneda products
by their definitions. 

Since $\Ext^*_R(k,M) \cong \coh^*(\Hom_R(Q_{\bu}, M))$ and 
$\Ext^*_R(k,k)\cong \coh^*(\Hom_R(Q_{\bu},Q_{\bu}))$,  
statement (ii) follows. Statement (i) is just a special case by letting $M=k$. 
\end{proof}


\subsection{Hochschild cohomology of a smash product}
\label{subsec:hochschild}

Let $H$ be a Hopf algebra  with bijective antipode and let $R$ be an $H$-module algebra.
Let $M$ be an $R\# H$-bimodule. 
\c{S}tefan~\cite{Stefan} constructed a spectral sequence
\begin{equation}
    \coh^p( H, \HH^q(R, M)) \Longrightarrow \HH^{p+q}(R\# H , M) ,
\end{equation}
and in fact he did this in the more general setting 
of a Hopf Galois extension.
Taking $M = R\# H$, one would like the spectral sequence to be multiplicative.
However, Negron~\cite{Negron15} pointed out that it is not multiplicative, and instead 
gave a different spectral sequence, in this special case 
of a smash product, that is multiplicative when $M = R\# H$.
We will show further that the techniques of~\cite{Negron15} imply that 
this alternative spectral sequence
is a differential bigraded module over that for $M=R\# H$. 
We first recall the construction of these spectral sequences from~\cite{Negron15}.

As in~\cite[Definition 3.1]{Negron15}, an $R$-bimodule $U$ is
{\bf $H$-equivariant} if it is an $H$-module for which the
structure maps $R\ot U\rightarrow U$ and $U\ot R\rightarrow U$
are $H$-module homomorphisms. 
As is pointed out in~\cite{Negron15}, Kaygun~\cite[Lemma~3.3]{Kaygun}
showed that $H$-equivariant $R$-bimodules are in fact modules
for a particular smash product $R^e\# H$. 
An $H$-equivariant $R^e$-complex is defined similarly.

For any $H$-equivariant $R$-bimodule $U$ and 
$R\# H$-bimodule $V$, 
there is a right $H$-module structure on $\Hom_{R^e}(U,V)$ given
in~\cite[Definition~4.1]{Negron15}:
\[
      (f\cdot h) (u) ~= ~\sum S(h_1) f(h_2\cdot u) h_3
\]
for all $f\in \Hom_{R^e}(U,V)$, $h\in H$, and $u\in U$.

Let $L_{\bu}$ be a projective resolution of $k$ as an $H$-module.
Let $K_{\bu}$ be an $H$-equivariant $R$-bimodule resolution of $R$.
(Existence is discussed in~\cite{Negron15}; there it is assumed
that $K_{\bu}$ is free over $R^e$ on a graded base space
$\overline{K}_{\bu}$ that is an $H$-submodule.) 
By~\cite[Corollary~4.4]{Negron15}, there is a graded vector
space isomorphism 
\begin{equation*}
    \HH^*(R\# H, M)~ \cong~ \coh^*( \Hom_H (L_{\bu} , \Hom_{R^e}(K_{\bu}, M)).
\end{equation*}
Standard filtrations of this bicomplex result in a 
spectral sequence where
\begin{equation}
\label{eqn:bimodule-ss}
    \coh^p( H, \HH^q(R, M)) \Longrightarrow \HH^{p+q}(R\# H , M) .
\end{equation}

By~\cite[Corollary~6.8]{Negron15}, when $M = R\# H$,
the spectral sequence~(\ref{eqn:bimodule-ss}), as
constructed above, is multiplicative.
The proof of~\cite[Corollary~6.8]{Negron15} begins 
with~\cite[Proposition~6.1]{Negron15}, and we extend this to the module 
setting next.
The action of $\Hom_{R^e}(K_{\bu}, R\# H)$
on $\Hom_{R^e}(K_{\bu}, M)$ is defined via a diagonal map
$\omega : K_{\bu}\rightarrow K_{\bu}\ot_R K_{\bu}$, analogously to the cup
product defined in~\cite[Section 5]{Negron15}, making it a differential
graded module.

\begin{prop}
\label{prop:dgmod-cplx-hochschild}
For any $R\# H$-bimodule $M$, the complex $\Hom_{R^e}(K_{\bu}, M)$ 
is a differential graded module over $\Hom_{R^e}(K_{\bu}, R\# H)$,
with compatible $H$-action. 
\end{prop}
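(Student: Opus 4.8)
The plan is to extend the argument of~\cite[Proposition 6.1]{Negron15}, which treats the case $M=R\# H$, to an arbitrary $R\# H$-bimodule $M$; the structure of the proof parallels that of Proposition~\ref{prop:dgmod-cplx-coh} in the cohomology setting. Following Negron, the module action uses the diagonal map $\omega\colon K_{\bu}\to K_{\bu}\ot_R K_{\bu}$ in place of a diagonal on a bar resolution, together with the right $R\# H$-module structure on $M$ in place of composition of functions. Explicitly, for $f\in\Hom_{R^e}(K_{\bu},M)$ and $g\in\Hom_{R^e}(K_{\bu},R\# H)$, I set $f\cdot g = m_M\circ(f\ot g)\circ\omega$, where $m_M\colon M\ot(R\# H)\to M$ is the right action; for $M=R\# H$ this recovers the cup product of~\cite[Section 5]{Negron15}. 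Two assertions must be checked: the graded Leibniz rule, and compatibility of this action with the right $H$-actions on the two factors.

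The Leibniz rule $\delta(f\cdot g)=\delta(f)\cdot g+(-1)^{|f|}f\cdot\delta(g)$ is formal. The differential on $\Hom_{R^e}(K_{\bu},-)$ is, up to sign, precomposition with $d_K$, and $\omega$ is a chain map intertwining $d_K$ with $d_K\ot 1+1\ot d_K$; the verification is then identical in structure to the computation of equation~\eqref{eqn:delta-f-g} in the proof of Proposition~\ref{prop:dgmod-cplx-coh}, the signs agreeing because $M$ is a differential graded module over the dg algebra $\Hom_{R^e}(K_{\bu},R\# H)$. I would invoke that computation rather than reproduce it.

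The compatibility with the $H$-action is the heart of the matter and the main obstacle. With the action of~\cite[Definition 4.1]{Negron15} written as $(f\cdot h)(u)=\sum S(h_1)\,f(h_2\cdot u)\,h_3$, I must show the multiplication is a morphism of right $H$-modules, namely $(f\cdot g)\cdot h=\sum(f\cdot h_1)\cdot(g\cdot h_2)$. Evaluating on $u\in K_{\bu}$ with $\omega(u)=\sum u^{(1)}\ot u^{(2)}$, and using that $\omega$ is $H$-equivariant so that $\omega(h\cdot u)=\sum h_1 u^{(1)}\ot h_2 u^{(2)}$ for the diagonal $H$-action on $K_{\bu}\ot_R K_{\bu}$, the left-hand side becomes, after coassociativity, $\sum\bigl[S(h_1)\cdot f(h_2 u^{(1)})\bigr]\cdot\bigl[g(h_3 u^{(2)})\,h_4\bigr]$. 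The right-hand side expands over the iterated comultiplication to $\sum\bigl[S(h_1)f(h_2 u^{(1)})h_3\bigr]\cdot\bigl[S(h_4)g(h_5 u^{(2)})h_6\bigr]$; regrouping the right $R\# H$-action so that the inner product $h_3 S(h_4)$ appears, applying the antipode relation $\sum h_1 S(h_2)=\varepsilon(h)1$ to collapse those two factors, and using that the left and right $R\# H$-actions on $M$ commute, one recovers the left-hand side. The entire difficulty lies in organizing this Sweedler bookkeeping; it rests on exactly three structural inputs—the $H$-equivariance of $\omega$ (guaranteed by the hypothesis that $K_{\bu}$ has $H$-submodule base space over $R^e$), the antipode axiom, and the commutation of the two $R\# H$-actions on the bimodule $M$. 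This chain-level statement is precisely the input needed to apply $\Hom_H(L_{\bu},-)$ and transport the differential graded module structure to the associated spectral sequence, as in the discussion following Proposition~\ref{prop:dgmod-cplx-coh}.
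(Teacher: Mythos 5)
Your proposal is correct and takes essentially the same approach as the paper, whose entire proof is the remark that the argument is that of Negron's Proposition~6.1 together with a check of compatibility with the $H$-action. Your explicit Sweedler verification of $(f\cdot g)\cdot h=\sum(f\cdot h_1)\cdot(g\cdot h_2)$ --- resting on the $H$-equivariance of $\omega$, the antipode axiom $\sum h_1S(h_2)=\varepsilon(h)1$, and the bimodule axiom for $M$ --- simply carries out in detail the check the paper leaves to the reader.
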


\begin{proof}
This is essentially the proof of~\cite[Proposition 6.1]{Negron15},
checking compatibility with the $H$-action. 
\end{proof}

We now conclude a needed result concerning spectral sequences for the  Hochschild cohomology of smash products. 

\begin{prop}
\label{prop:dgmod-ss-hochschild}
Let $H$ be a Hopf algebra with bijective antipode and let $R$ be an $H$-module algebra.
For any $R \# H$-bimodule $M$, let $E^{*,*}(M)$ denote the spectral sequence~\eqref{eqn:bimodule-ss} above. 
Then $E^{*,*}(M)$ is a differential bigraded module over the dg algebra $E^{*,*}(R \# H)$
and the following properties hold:
\begin{enumerate}
 \item $\Ext^*_{R^e}(R,R \#H)$ is a graded $H$-module algebra. 
 \item $\Ext^*_{R^e}(R,M)$ is a graded module over $\Ext^*_{R^e}(R,R\# H) \# H$. 
 \end{enumerate}
\end{prop}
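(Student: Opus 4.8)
The plan is to follow the blueprint of the proof of Proposition~\ref{prop:dgmod-ss-coh}, transporting each step to the Hochschild setting, where the inner $\Hom$ is taken over $R^e$ rather than over $R$. The essential Hopf-algebraic input is already isolated in Proposition~\ref{prop:dgmod-cplx-hochschild}: the complex $\Hom_{R^e}(K_{\bu},M)$ is a differential graded module over $\Hom_{R^e}(K_{\bu},R\#H)$ via the diagonal $\omega:K_{\bu}\to K_{\bu}\ot_R K_{\bu}$, and this action is compatible with the right $H$-action $(f\cdot h)(u)=\sum S(h_1)f(h_2\cdot u)h_3$. Thus the bulk of the verification has been done, and what remains is to carry this structure through the outer functor $\Hom_H(L_{\bu},-)$ and then to the spectral sequence.

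First I would establish the chain-level module structure on the full bicomplex. Writing $C^{\bu,\bu}(M)=\Hom_H(L_{\bu},\Hom_{R^e}(K_{\bu},M))$ and $B^{\bu,\bu}=\Hom_H(L_{\bu},\Hom_{R^e}(K_{\bu},R\#H))$, I would fix a diagonal chain map $\Delta:L_{\bu}\to L_{\bu}\ot L_{\bu}$ on the $H$-resolution and define the action of $B^{\bu,\bu}$ on $C^{\bu,\bu}(M)$ by $f\cdot g=\mu(f\ot g)\Delta$, exactly as in~\eqref{eqn:fdotg}, with $\mu$ now the composition supplied by Proposition~\ref{prop:dgmod-cplx-hochschild}. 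The $H$-action is indispensable precisely here: the pairing on $\Hom_H(L_{\bu},-)$ built from $\Delta$ is well defined only because $\mu$ is an $H$-module map, i.e.\ $\mu((f\ot g)\cdot h)=(\mu\cdot h)(f\ot g)$. Granting this, the Leibniz rule for the total differential holds by the same computation that established~\eqref{eqn:delta-f-g}, so $C^{\bu,\bu}(M)$ is a differential bigraded module over $B^{\bu,\bu}$. Since the standard filtrations of these first-quadrant bicomplexes are bounded, each page of the associated spectral sequences inherits the module structure, and $E^{*,*}(M)$ becomes a differential bigraded module over the dg algebra $E^{*,*}(R\#H)$, whose multiplicative structure is established in~\cite[Corollary~6.8]{Negron15}. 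This settles the first assertion.

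For (i) and (ii) I would track the $H$-action to cohomology. Because $K_{\bu}$ is a projective $R^e$-resolution of $R$, there are isomorphisms $\Ext^*_{R^e}(R,M)\cong\coh^*(\Hom_{R^e}(K_{\bu},M))$ and $\Ext^*_{R^e}(R,R\#H)\cong\coh^*(\Hom_{R^e}(K_{\bu},R\#H))$. By Proposition~\ref{prop:dgmod-cplx-hochschild}, $\mu$ is an $H$-module map commuting with the differentials, so on passing to homology it induces an $H$-module map $\coh^*(\mu)$; under the identifications above this is the Yoneda product, as the standard comparison of Yoneda splices with composition of chain maps (see the diagram in the proof of Proposition~\ref{prop:dgmod-ss-coh}) shows. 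Statement (i) is then the specialization $M=R\#H$: the graded $H$-module algebra structure on $\Ext^*_{R^e}(R,R\#H)$ is inherited from the $H$-equivariant differential graded algebra $\Hom_{R^e}(K_{\bu},R\#H)$. For (ii), the $H$-equivariance of the action together with the graded module structure assemble, by the defining relations of a smash product, into a module structure of $\Ext^*_{R^e}(R,M)$ over $\Ext^*_{R^e}(R,R\#H)\#H$.

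The main obstacle is the verification that $\mu$ is genuinely $H$-equivariant for the bimodule action $(f\cdot h)(u)=\sum S(h_1)f(h_2\cdot u)h_3$, which, unlike the module-category action underlying Proposition~\ref{prop:dgmod-ss-coh}, involves the extra right multiplication by $h_3$ and therefore relies on bijectivity of the antipode together with a careful use of coassociativity and the antipode axioms. This is exactly the content recorded in Proposition~\ref{prop:dgmod-cplx-hochschild}, so here it can be invoked rather than reproved; the remaining task is only to confirm that the equivariance persists through $\Hom_H(L_{\bu},-)$ and then to each page of the spectral sequence, which is routine once the chain-level statement is in place.
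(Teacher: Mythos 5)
Your proof is correct, and it takes the route that the paper's own proof mentions in one sentence but does not carry out: a direct calculation, mirroring Proposition~\ref{prop:dgmod-cplx-coh}, making $\Hom_H(L_{\bu},\Hom_{R^e}(K_{\bu},M))$ a differential bigraded module over $\Hom_H(L_{\bu},\Hom_{R^e}(K_{\bu},R\#H))$ via a diagonal chain map $\Delta$ on $L_{\bu}$, the pairing $f\cdot g=\mu(f\ot g)\Delta$ as in~\eqref{eqn:fdotg}, the Leibniz verification of~\eqref{eqn:delta-f-g}, and the $H$-equivariance of $\mu$ supplied by Proposition~\ref{prop:dgmod-cplx-hochschild}. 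The paper concedes this works (``a similar calculation\ldots shows directly''), but its displayed argument is different: it transports the module structure through Negron's natural isomorphism $\Xi:\Hom_{(R\#H)^e}(K_{\bu}\#L_{\bu}^{\uparrow},M)\stackrel{\cong}{\longrightarrow}\Hom_H(L_{\bu},\Hom_{R^e}(K_{\bu},M))$, where $K_{\bu}\#L_{\bu}^{\uparrow}$ is a projective $(R\#H)$-bimodule resolution of $R\#H$ and the action $\rho$ arises from a diagonal map on that resolution (\cite[Theorems 3.5, 4.3, 6.5 and Proposition 6.4]{Negron15}), then checks a commutative square intertwining $\rho$ with the action $\rho'$ on the right-hand bicomplex. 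What the paper's route buys is that the dg algebra structure is identified on the nose with the Hochschild cochain algebra of $R\#H$ on an honest bimodule resolution, so the induced action on the abutment is manifestly the standard cup-product action of $\HH^*(R\#H)$ on $\HH^*(R\#H,M)$ --- the form in which the result is used in Theorem~\ref{thm:FiniteTypeHochschild}. In your direct route this identification is an extra, routine, comparison that you should state: the module structure your $\Delta$ induces on $E_\infty$ agrees with the Yoneda/cup action on the abutment, by the same chain-map-versus-splice comparison you invoke at the inner $\Hom_{R^e}$ level (as in the diagram in the proof of Proposition~\ref{prop:dgmod-ss-coh}). Your handling of (i) and (ii) --- passing to homology in Proposition~\ref{prop:dgmod-cplx-hochschild} and assembling the graded module structure together with its $H$-equivariance into a module over the smash product $\Ext^*_{R^e}(R,R\#H)\#H$ --- coincides with the paper's final paragraph.
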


\begin{proof}
A similar calculation to that of Proposition~\ref{prop:dgmod-cplx-coh} 
shows directly that $\Hom_H(L_{\bu}, \Hom_{R^e}(K_{\bu},M))$ is a
differential bigraded module over $\Hom_H(L_{\bu},\Hom_{R^e}(K_{\bu},R\# H))$. 
Alternatively, we give an argument similar to that in~\cite{Negron15} 
for the first statement.
That is, we use Proposition~\ref{prop:dgmod-cplx-hochschild}
combined with properties of the resolution $L_{\bu}$ of $k$ as an $H$-module:
For any right $H$-module $U$, let $U^{\uparrow}$ denote the induced $H^e$-module $U \ot_H H^e$ from $U$, where $H^e$ is viewed as a left $H$-module with the action given by $h \mapsto \sum h_1 \ot S(h_2)$.
Let $L_{\bu}^{\uparrow}$ denote the complex $L_{\bu}$ for which each
module has been induced in this way to an $H^e$-module.
By \cite[Theorem~3.5]{Negron15}, $K_{\bu} \# L_{\bu}^{\uparrow}$ is a projective $R \# H$-bimodule resolution of $R \# H$. Furthermore, for any $R\# H$-bimodule $M$, there is a natural isomorphism of chain complexes \cite[Theorem~4.3]{Negron15},
$$\Xi: \Hom_{(R\#H)^e}(K_{\bu} \# L_{\bu}^{\uparrow},M) \xrightarrow{\cong} \Hom_H(L_{\bu},\Hom_{R^e}(K_{\bu},M)).$$
When $M=R \#H$, it follows from \cite[Theorem~6.5]{Negron15} that $\Xi$ is an isomorphism of dg algebras. Moreover, similar to the argument given in the proof of \cite[Theorem~6.5]{Negron15},
one can check that the following diagram commutes:
\[
\begin{xy}*!C\xybox{
\xymatrixcolsep{2pc}
\xymatrix{
 \Hom(K_{\bu} \# L_{\bu}^{\uparrow},R\#H) \ot \Hom(K_{\bu} \# L_{\bu}^{\uparrow},M) \ar[r]^-{\Xi \ot \Xi} \ar[d]^{\rho} & \Hom(L_{\bu},\Hom(K_{\bu},R\#H)) \ot \Hom(L_{\bu},\Hom(K_{\bu},M)) \ar[d]^{\rho '} \\
 \Hom(K_{\bu} \# L_{\bu}^{\uparrow},M) \ar[r]^-{\Xi} & \Hom(L_{\bu}, \Hom(K_{\bu},M)) , 
}}
\end{xy}
\]
where $\rho$ is the module action resulting from the diagonal map
on $K_{\bu}\# L_{\bu}^{\uparrow}$ given in~\cite[Proposition~6.4]{Negron15}
and $\rho '$ is a similarly defined module action. 
Thus the action on the left corresponds to the action on the right.
The action on the left arises from a diagonal map on 
$K_{\bu}\# L_{\bu}^{\uparrow}$ and so it is a differential
bigraded module structure.
Therefore the same is true on the right. 
The corresponding spectral sequences are induced by the row and column filtrations on the first quadrant double complexes $\Hom_H(L_{\bu},\Hom_{R^e}(K_{\bu},R\#H))$ and $\Hom_H(L_{\bu},\Hom_{R^e}(K_{\bu},M))$. Thus, $E^{*,*}(M)$ is a differential bigraded module over the dg algebra $E^{*,*}(R \# H)$.

Now taking homology in Proposition~\ref{prop:dgmod-cplx-hochschild}, we see that $\Ext^*_{R^e}(R,M) \cong \coh^*(\Hom_{R^e}(K_{\bu}, M))$ is a graded module over $\Ext^*_{R^e}(R,R\# H) \cong \coh^*(\Hom_{R^e}(K_{\bu}, R\# H))$, and the module structure is compatible with the $H$-action. Hence $\Ext^*_{R^e}(R,M)$ is a graded module over the smash product $\Ext^*_{R^e}(R,R\# H) \# H$. This proves (ii). Statement (i) is a special case by letting $M=R\# H$. 
\end{proof}


\section*{Acknowledgments}
Part of this research work was done during the second author's visit to Texas A\&M University in May 2018. He is grateful for the invitation of the third author and thanks Texas A\&M University for its hospitality. All the authors also thank Mathematisches Forschungsinstitut at Oberwolfach for hosting the mini-Workshop: Cohomology of Hopf Algebras and Tensor Categories in March 2019.



\begin{thebibliography}{KKZ}

\bibitem{A} N.\ Andruskiewitsch, {\em On finite-dimensional Hopf algebras}, Proceedings of the ICM Seoul 2014 {\bf Vol. II} (2014), 117--141.

\bibitem{AS} N.\ Andruskiewitsch and H.-J.\ Schneider,
{\em Lifting of quantum linear spaces and pointed Hopf algebras of order $p^3$}, 
J.\ Algebra \textbf{209} (1998), no.\ 2, 658--691.

\bibitem{AS2} N.\ Andruskiewitsch and H.-J.\ Schneider,
{\em On the classification of finite dimensional pointed Hopf algebras},
Ann.\ Math.\ {\bf 171} (2010), no.~1, 375--417. 

\bibitem{AG} I.\ Angiono and A.\ Garcia-Iglesias, {\em Pointed Hopf algebras: a guided tour to the liftings}, Revista Colombiana de Matem\'{a}ticas: Proceedings of ``XXII CLA Coloquio Latinoamericano de Algebra" (2019), arXiv:1807.07154. 

\bibitem{Benson1} D.\ J.\ Benson,
``Representations and Cohomology I: Basic representation theory
of finite groups and associative algebras,"
Cambridge University Press, 1995.

\bibitem{BO08} P.\ A.\ Bergh and S.\ Oppermann,
{\em Cohomology of twisted tensor products},
J.\ Algebra \textbf{320} (2008), 3327--3338.

\bibitem{BGMS} R.-O.\ Buchweitz, E.\ L.\ Green, D.\ Madsen,
and {{\O}}. Solberg, {\em Finite Hochschild cohomology without
finite global dimension}, Math. Res. Lett. \textbf{12}
(2005), 805--816.

\bibitem{EHSS} K.\ Erdmann, M.\ Holloway,\ N. Snashall, \O.\ Solberg and R.\ Taillefer, {\em Support varieties for selfinjective algebras}, K-Theory \textbf{33} (2004), no.\ 1, 67--87.

\bibitem{ESW} K.\ Erdmann, \O.\ Solberg, and X.\ Wang, {\em On the structure and cohomology ring of connected Hopf algebras}, J. Algebra \textbf{527} (2019), 366--398.

\bibitem{Evens} L.\ Evens, {\em The cohomology ring of a finite group}, Trans.\ Amer.\ Math.\ Soc.\ \textbf{101} (1961), 224--239. 

\bibitem{EsOt} P.\ Etingof and V.\ Ostrik, {\em Finite tensor categories}, Mosc. Math. J. \textbf{4} (2004), no. 3, 627--654.

\bibitem{EGNO} P.\ Etingof, S.\ Gelaki, D.\ Nikshych, and V.\ Ostrik, {\em Tensor categories}, Mathematical Surveys and Monographs \textbf{205}, American Mathematical Society, 2015.

\bibitem{FW2015} J.\ Feldvoss and S.\ Witherspoon, {\em Support varieties and representation type of small quantum groups}, Int.\ Math.\ Res.\ Not. (2010), no.\ 7, 1346--1362. Erratum, Int.\ Math.\ Res.\ Not. (2015), no.\ 1, 288--290.

\bibitem{FS} E.\ Friedlander and A.\ Suslin, {\em Cohomology of finite group schemes over a field}, Invent.\ Math.\ \textbf{127} (1997), no.~2, 209--270. 

\bibitem{Ger63} M.\ Gerstenhaber, {\em The cohomology structure of an associative ring}, Ann.\ Math.\ (2) \textbf{78} (1963), 267--288.

\bibitem{Ger64} M.\ Gerstenhaber, {\em On the deformation of rings and algebras}, Ann.\ Math.\ \textbf{79} (1964), 59--103.

\bibitem{ginzburg-kumar93} V.\ Ginzburg and S.\ Kumar, 
{\em Cohomology of quantum groups at roots of unity}, 
Duke Math.\ J.\ \textbf{69} (1993), 179--198.

\bibitem{GHoch59} G.\ Hochschild, {\em On the cohomology groups of an associative algebra}, Ann.\ Math.\ (2) \textbf{46} (1945), 58--67.

\bibitem{JL} L.\ Ji and Z.\ Lin, 
{\em Finite generation properties of cohomology rings for
infinite groups}, J.\ Pure Appl.\ Algebra \textbf{216} (2012), 1118--1133. 

\bibitem{Kaygun}
A.\ Kaygun, {\em Hopf-Hochschild (co)homology of module algebras},
Homology, Homotopy Appl.\ \textbf{9} (2) (2007), 451--472.

\bibitem{Len} H.\ Lenzing, {\em Endlich pr\"asentierbare Moduln}, Arch.\ Math.\ (Basel) \textbf{20} (1969), 262--266.

\bibitem{LOYW} J.-F.\ L\"u, S.-Q.\ Oh, X.-L.\ Yu, and X.\ Wang, {\em A note on the bijectivity of antipode of a Hopf algebra and its applications},  Proc.\ Amer.\ Math.\ Soc.\ {\bf 146} (2018), no.\ 11, 4619--4631.

\bibitem{Lus90} G.\ Lusztig, {\em Finite dimensional Hopf algebras arising from quantized universal enveloping algebras}, J.\ Amer.\ Math.\ Soc.\ {\bf 3} (1990), 257--296.

\bibitem{MPSW} M.\ Mastnak, J.\ Pevtsova, P.\ Schauenburg, and
S.\ Witherspoon,
{\em Cohomology of finite dimensional pointed Hopf algebras},
Proc.\ London Math.\ Soc.\ (3) \textbf{100} (2010), no.\ 2, 377--404. 

\bibitem{Ma08} A. Masuoka, {\em Abelian and non-abelian second cohomologies of quantized enveloping algebras}, J.\ Algebra {\bf 320} (2008), 1--47.

\bibitem{M} J.\ P.\ May, {\em The cohomology of restricted Lie algebras and of Hopf algebras}, 
J.\ Algebra \textbf{3} (1966), 123--146.

\bibitem{McC} J.\ McCleary, ``A User's Guide to Spectral Sequences,"
2d ed., Cambridge University Press, 2001.

\bibitem{MR} J.\ C.\ McConnell and J.\ C.\ Robson,
``Noncommutative Noetherian Rings,"
AMS Graduate Studies in Mathematics, Vol.\ 30, 1987.

\bibitem{MO93} S.\ Montgomery, ``Hopf Algebras and Their Actions on Rings," CBMS Regional Conference Series in Mathematics, \textbf{82}, Amer.~Math.~Soc., Providence, RI, 1993.

\bibitem{Negron15}
C.\ Negron, {\em Spectral sequences for the cohomology rings of a smash product}, J.\ Algebra \textbf{433} (2015), 73--106. 

\bibitem{NP2018} C.\ Negron and J.\ Plavnik, 
{\em Cohomology of finite tensor categories: duality and Drinfeld centers}, preprint, arXiv:1807.08854.

\bibitem{NWa} V.\ C.\ Nguyen and X.\ Wang, 
{\em Pointed $p^3$-dimensional Hopf algebras in positive characteristic}, Algebra Colloq.\ {\bf 25} (2018), no.\ 3, 399--436. 
\bibitem{NWW2015} V.\ C.\ Nguyen, L.\ Wang, and X.\ Wang, {\em Classification of connected Hopf algebras of dimension $p^3$ I }, J.\ Algebra {\bf 424} (2015), 473--505.

\bibitem{NWW2018} V.\ C.\ Nguyen, L.\ Wang, and X.\ Wang, 
{\em Primitive deformations of quantum $p$-groups}, Algebr.\ Represent.\ Theory {\bf 22} (2019), no.\ 4, 837--865.

\bibitem{NWW2019} V.\ C.\ Nguyen, X.\ Wang, and S.\ Witherspoon, 
{\em Finite generation of some cohomology rings via twisted tensor product and Anick resolutions}, J.\ Pure Appl.\ Algebra {\bf 223} (2019), no.\ 1, 316--339. 

\bibitem{NZ89} W. D. Nichols and M. B. Zoeller,  ``A Hopf Algebra Freeness Theorem," Amer.\ J.\ Math.\ {\bf 111} (1989), 381--385.

\bibitem{NWi} V.\ C.\ Nguyen and S.\ Witherspoon, 
{\em Finite generation of the cohomology of some skew group algebras}, 
Algebra Number Theory \textbf{8} (2014), no.\ 7, 1647--1657; {\it Erratum}, \textbf{12} (2018), no.\ 2, 491--492. 

\bibitem{PW} J.\ Pevtsova and S.\ Witherspoon,
{\em Varieties for modules of quantum elementary abelian groups},
Algebr.\ Represent.\ Theory \textbf{12} (2009), no.\ 6, 567--595.

\bibitem{PV16} B.\ Pogorelsky, and C.\ Vay, {\em Verma and simple modules for quantum groups at non-abelian groups}, Adv.\ Math.\ {\bf 301} (2016), 423--457.

\bibitem{Sky} S.~Skryabin, 
{\em Invariants of finite Hopf algebras},
Adv.\ Math. \textbf{183} (2004) 209--239.

\bibitem{SO} N.\ Snashall and \O\ Solberg, {\em Support varieties and Hochschild cohomology rings}, Proc.\ London\ Math.\ Soc. (3) {\bf 88} (2004), 705--732.

\bibitem{Solberg} \O.\ Solberg, {\em Support varieties for modules and complexes}, Trends in Representation Theory of Algebras and Related Topics, Contemporary Math., Vol.\ \textbf{406}, Amer.\ Math.\ Soc., 2006, 239--270. 

\bibitem{Stefan} D.\ \c{S}tefan, {\em Hochschild cohomology on
Hopf Galois extensions}, J.\ Pure Appl.\ Algebra \textbf{103} (2) 
(1995), 221--233.

\bibitem{SV} D.\ \c{S}tefan and C.\ Vay,
{\em The cohomology ring of the 12-dimensional Fomin-Kirillov algebra},
Adv.\ Math.\ {\bf 291} (2016), 584--620. 

\bibitem{Suarez-Alvarez} M.\ Su\'{a}rez-\'{A}lvarez,
{\em The Hilton-Eckmann argument for the anti-commutativity of cup products},
Proc.\ Amer.\ Math.\ Soc.\ \textbf{132} (2004), no.\ 8, 2241--2246. 

\bibitem{Wang2013} X.\ Wang, {\em Connected Hopf algebras of dimension $p^2$}, J.\ Algebra \textbf{391} (2013), 93--113.

\bibitem{W} C.\ A.\ Weibel, ``A Introduction to Homological Algebra,"
Cambridge University Press, 1994.

\bibitem{Xu} F.\ Xu, {\em Hochschild and ordinary cohomology rings of small categories}, Adv.\ Math.\ \textbf{219} (2008), 1872--1893.

\bibitem{Zhu} S.-L.\ Zhu, {\em Integrality of module algebras over its invariants}, J.\ Algebra \textbf{180} (1996), 187--205.

\end{thebibliography}
\end{document}